\newtheorem{theorem}{Theorem}[section]
\newtheorem{definition}[theorem]{Definition}
\newtheorem{example}[theorem]{Example}
\newtheorem{remark}[theorem]{Remark}
\newtheorem{lem}[theorem]{Lemma}
\newtheorem{proposition}[theorem]{Proposition}
\newtheorem{corollary}[theorem]{Corollary}
\newtheorem{sublem}[theorem]{Sublemma}
\newtheorem{some facts}[theorem]{Some facts about 
${\bf M}^{\mathcal{C}^{op}}_{\ast}$}
\begin{document}
\title[Complements on enriched precategories]
{Complements on enriched precategories}
\author[{A. E.} {Stanculescu}]{{Alexandru E.} {Stanculescu}}
\address{\newline Department of Mathematics and Statistics,
\newline Masaryk University,  Kotl\'{a}{\v{r}}sk{\'{a}} 2,\newline
611 37 Brno, Czech Republic}
\email{stanculescu@math.muni.cz}

\begin{abstract}
We make some remarks on the foundations of the homotopy
theory of enriched precategories, as exposed in Carlos 
Simpson's book ``Homotopy theory of higher categories''.
\end{abstract}

\maketitle

The main goal of this note is to present an alternative point of view on 
some results from Carlos Simpson's book ``Homotopy theory of higher 
categories'' \cite{Si}. For a part of these results, the alternative approach
is already hinted at by Simpson. Save section 3 below, we target 
essentially the content of chapters 9-11, sections 12.1-12.3, 12.6 
and some facts scattered through chapters 13 and 14. There is one 
major exception: we do not treat here the so-called injective model structure
on the category of enriched precategories with fixed set of objects.

In section 1 we introduce one of the two categories of interest to us and
we highlight some of its properties. In section 2 we first address the Reedy 
model structure on enriched precategories with fixed set
of objects and study the behaviour of this model structure 
under change of diagram and base category. Next we address
the projective model structure on enriched precategories with fixed set
of objects and compare it with the Reedy model structure.
Section 3 reviews Lurie's proof \cite{Lu} of the Quillen equivalence
between the projective model structure on enriched precategories
and that of enriched categories, in the (critical)  fixed set of objects
case. We observe that his result holds under weaker assumptions
on the base category. In section 4 we recall the construction of the
category of enriched precategories--the other category of interest to us,
and we remark that it is a bifibration over the category of sets. 
Section 5 introduces the fibred Reedy model structure on 
the category of enriched precategories and singles out
a certain weak factorization system on it.
In section 6 we introduce the fibred projective model 
structure on the category of enriched precategories.
In the appendix we recall a couple of results concerning
left Bousfield localization.
\\

{\bf Notation.} The terminal object of a category, when it exists, 
is denoted by $\ast$.
\\

Independent from the results of this note, we make
below a list of some facts from \cite{Si} we wish to 
understand better in the future.
\\

(1)  On page 291, why is the sentence ``Furthermore,...(see Lemma 10.3.2)''
true?

(2) One may compare (the proof of) 13.7.3 with the paragraph on page 447
``A map...Thus, $p$ is a global weak equivalence.''

(3) 14.3.5 is used in the proof of 19.2.1(PGM6). It is not mentioned
in the statement of 19.2.1 that the class of weak equivalences of 
$\mathcal{M}$ is closed under transfinite compositions.

(4) In the proof of 18.7.1, on page 438, the maps 
$f|_{\mathcal{B}\times \{v_{1}\}}$ and $q$ do not seem
to have the same target.

(5) In the proof of 19.3.1, why is the left vertical map in the diagram a cofibration?

\section{The category ${\bf M}^{\mathcal{C}^{op}}_{\ast}$}

Let $\mathcal{C}$ be a small Reedy category and 
let $F^{n}\mathcal{C}$ be the $n$-filtration of $\mathcal{C}$
\cite[15.1.22]{Hi}; then $F^{0}\mathcal{C}$ is a discrete category \cite[15.1.23]{Hi}.
We denote the inclusion $F^{0}\mathcal{C}\subset \mathcal{C}$ by $\sigma_{0}$. 
Let {\bf M} be a category. The restriction functor $\sigma_{0}^{\ast}:{\bf M}^{\mathcal{C}^{op}}
\rightarrow {\bf M}^{(F^{0}\mathcal{C})^{op}}$ has a left adjoint $\sigma_{0{!}}$
provided that {\bf M} has coproducts.
\begin{definition}
Let {\bf M} be a category with terminal object. We denote by 
${\bf M}^{\mathcal{C}^{op}}_{\ast}$ the full subcategory of 
${\bf M}^{\mathcal{C}^{op}}$  on objects {\bf X} such that 
$\sigma_{0}^{\ast}{\bf X}=\ast$. We let $K$ be the inclusion functor 
${\bf M}^{\mathcal{C}^{op}}_{\ast}\subset {\bf M}^{\mathcal{C}^{op}}$.
\end{definition}
\begin{some facts}
{\rm
$(a)$ $K$ creates colimits indexed by connected digrams and it has a left 
adjoint $r$ calculated as follows. 
If ${\bf X} \in {\bf M}^{\mathcal{C}^{op}}$, 
then one has a pushout square
\[
   \xymatrix{
\sigma_{0!}\sigma_{0}^{\ast}{\bf X} \ar[r] \ar[d] & {\bf X} \ar[d]\\
\sigma_{0!}\sigma_{0}^{\ast}\ast \ar[r] & r{\bf X}\\
}
  \]

$(b)$ ${\bf M}^{\mathcal{C}^{op}}_{\ast}$ is an accessible
category if {\bf M} is. 

$(c)$ Suppose that {\bf M} is a closed category with monoidal product $\otimes$. 
Write $Y^{X}$ for the internal hom of two objects $X$, $Y$ of {\bf M}. 
Then ${\bf M}^{\mathcal{C}^{op}}$ is tensored, cotensored and enriched over 
{\bf M}, with tensor, cotensor and {\bf M}-hom defined as
$$(A\bigstar {\bf X})(c)=A\otimes {\bf X}(c)$$ 
$$({\bf X}^{A})(c)={\bf X}(c)^{A}$$
$$\underline{Map}({\bf X},{\bf Y})=\underset{c} \int {\bf Y}(c)^{{\bf X}(c)}$$
It follows that ${\bf M}^{\mathcal{C}^{op}}_{\ast}$ is tensored, 
cotensored and enriched over {\bf M}, with tensor, cotensor 
and {\bf M}-hom defined by the formulae
$A\bigstar {\bf X}=r(A\bigstar K{\bf X})$, 
${\bf X}^{A}=(K{\bf X})^{A}$ and 
$\underline{Map}({\bf X},{\bf Y})=
\underline{Map}(K{\bf X},K{\bf Y})$.
The adjunction $(r,K)$ becomes an {\bf M}-adjunction.

$(d)$ For every small category $I$ there is an isomorphism of 
categories $$({\bf M}^{I})^{\mathcal{C}^{op}}_{\ast}\cong 
({\bf M}^{\mathcal{C}^{op}}_{\ast})^{I}$$
and these categories are isomorphic in turn to 
the full subcategory of ${\bf M}^{\mathcal{C}^{op}\times I}$
on objects {\bf X} having the property that ${\bf X}(c_{0},i)=\ast$,
for $c_{0}\in F^{0}\mathcal{C}$ and $i\in I$.

$(e)$ (Change of diagram) Let $F:\mathcal{C}\rightarrow \mathcal{D}$ be a functor
between Reedy categories which preserves the $0$-filtration. 
The induced functor $F^{\ast}:{\bf M}^{\mathcal{C}^{op}}_{\ast}
\rightarrow {\bf M}^{\mathcal{D}^{op}}_{\ast}$ has a left adjoint $F_{!}'$
 constructed in such a way that it makes the diagram
\[
   \xymatrix{
{\bf M}^{\mathcal{C}^{op}} \ar @<2pt> [r]^{F_{!}} \ar @<2pt> [d]^{r} &
{\bf M}^{\mathcal{D}^{op}} \ar @<2pt> [l]^{F^{\ast}} \ar @<2pt> [d]^{r}\\
{\bf M}^{\mathcal{C}^{op}}_{\ast}  \ar @<2pt> @{..>} [r]^{F_{!}'} \ar @<2pt> [u]^{K} 
& {\bf M}^{\mathcal{D}^{op}}_{\ast} \ar @<2pt> [l]^{F^{\ast}} \ar @<2pt> [u]^{K}\\
  }
  \]
commutative in the obvious sense. Since $K$ is full and faithful, 
one has $F_{!}'=rF_{!}K$.

$(f)$ Let $F:{\bf M}_{1}\rightarrow {\bf M}_{2}$ be a functor which 
preserves the terminal object. Then $F$ induces a functor 
$F:{\bf M}^{\mathcal{C}^{op}}_{1\ \ast}
\rightarrow {\bf M}^{\mathcal{C}^{op}}_{2\ \ast}$.
If $F:{\bf M}_{1}\rightarrow {\bf M}_{2}$ is full and faithful, then so is $F$. 

$(g)$ (Change of base category) Let $F:{\bf M}_{1}\rightleftarrows {\bf M}_{2}:G$ 
be an adjoint pair. The induced functor $G:{\bf M}^{\mathcal{C}^{op}}_{2\ \ast}
\rightarrow {\bf M}^{\mathcal{C}^{op}}_{1\ \ast}$ has a left adjoint $F'$
 constructed in such a way that it makes the diagram
\[
   \xymatrix{
{\bf M}_{1}^{\mathcal{C}^{op}} \ar @<2pt> [r]^{F} 
\ar @<2pt> [d]^{r_{1}} & {\bf M}_{2}^{\mathcal{C}^{op}} 
\ar @<2pt> [l]^{G} \ar @<2pt> [d]^{r_{2}}\\
{\bf M}^{\mathcal{C}^{op}}_{1\ \ast}  \ar @<2pt> @{..>} [r]^{F'} 
\ar @<2pt> [u]^{K_{1}} & {\bf M}^{\mathcal{C}^{op}}_{2\ \ast} 
\ar @<2pt> [l]^{G} \ar @<2pt> [u]^{K_{2}}\\
  }
  \]
commutative in the obvious sense. Since $K_{1}$ is full and faithful, one has 
$F'=r_{2}FK_{1}$. If $F\ast\cong \ast$, then $F'\ast\cong \ast$.}
\end{some facts}

\section{The Reedy and projective model structures for Segal {\bf M}-categories
with fixed set of objects}

\subsection{The Reedy model structure}
Recall from definition 1.1 the functor $K$.
\begin{proposition}{\rm (compare with \cite[11.7 and 12.3.1]{Si})}
Let  {\bf M} be a model category and $\mathcal{C}$ a 
small Reedy category. Let ${\bf M}^{\mathcal{C}^{op}}$
have the Reedy model structure. Then the category 
${\bf M}^{\mathcal{C}^{op}}_{\ast}$ admits a model 
category structure with the classes of 
weak equivalences, cofibrations and fibrations defined 
via the functor $K$. ${\bf M}^{\mathcal{C}^{op}}_{\ast}$
is cofibrantly generated if ${\bf M}^{\mathcal{C}^{op}}$ is. 
${\bf M}^{\mathcal{C}^{op}}_{\ast}$ is left proper if {\bf M} is.
\end{proposition}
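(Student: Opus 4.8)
The plan is to check the model category axioms for the three classes created by $K$, reducing each to the Reedy model structure on ${\bf M}^{\mathcal{C}^{op}}$ and the formal properties of $K$ and $r$ recorded above. By definition a morphism $f$ of ${\bf M}^{\mathcal{C}^{op}}_{\ast}$ is a weak equivalence, cofibration or fibration precisely when $Kf$ is, so two-out-of-three and closure under retracts are immediate: $K$ preserves composites and retract diagrams, and the corresponding classes in ${\bf M}^{\mathcal{C}^{op}}$ have these properties. For bicompleteness, ${\bf M}^{\mathcal{C}^{op}}_{\ast}$ is cocomplete because it is reflective via $r$ (part $(a)$), and it is complete because $\sigma_0^{\ast}$, having the left adjoint $\sigma_{0!}$, preserves limits: the limit ${\bf L}$ in ${\bf M}^{\mathcal{C}^{op}}$ of a diagram valued in ${\bf M}^{\mathcal{C}^{op}}_{\ast}$ satisfies $\sigma_0^{\ast}{\bf L}=\lim\ast=\ast$, so $K$ creates limits.

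For the lifting axiom I would exploit that $K$ is full and faithful. Given a cofibration and a fibration of ${\bf M}^{\mathcal{C}^{op}}_{\ast}$, one of which is a weak equivalence, a commutative square between them maps under $K$ to a square in ${\bf M}^{\mathcal{C}^{op}}$ between a Reedy cofibration and a Reedy fibration, again with one of them a weak equivalence. The Reedy structure supplies a diagonal filler; since its source and target lie in the image of $K$, it is itself the image of a unique morphism of ${\bf M}^{\mathcal{C}^{op}}_{\ast}$, which, by faithfulness, solves the original lifting problem.

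The crux is the factorization axiom, where the naive approach---factor $Kf$ and reflect by $r$---fails, since the middle object of a Reedy factorization of $Kf$ need not lie in ${\bf M}^{\mathcal{C}^{op}}_{\ast}$: its degree-zero components come from factoring $\mathrm{id}_{\ast}$ in {\bf M}, whose intermediate object is in general not $\ast$. I would instead build the Reedy factorization directly, observing that for a morphism of ${\bf M}^{\mathcal{C}^{op}}_{\ast}$ the relative latching map at every object of $F^{0}\mathcal{C}$ is forced to be $\mathrm{id}_{\ast}$; factoring it as $\ast\xrightarrow{\mathrm{id}}\ast\xrightarrow{\mathrm{id}}\ast$ is a legitimate (cofibration, trivial fibration) and (trivial cofibration, fibration) factorization in {\bf M}, while at objects of positive degree one uses a functorial factorization of {\bf M} as usual. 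The Reedy construction with these choices yields the required factorization in ${\bf M}^{\mathcal{C}^{op}}$, now with middle object ${\bf W}$ satisfying $\sigma_0^{\ast}{\bf W}=\ast$; fullness of $K$ transports it into ${\bf M}^{\mathcal{C}^{op}}_{\ast}$, and the choice at degree zero being constant, the factorization is functorial. This is the step where the Reedy hypothesis is genuinely used, and I expect it to be the main obstacle.

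For the two supplementary claims, suppose first that ${\bf M}^{\mathcal{C}^{op}}$ is cofibrantly generated by sets $I$ and $J$; I would show ${\bf M}^{\mathcal{C}^{op}}_{\ast}$ is cofibrantly generated by $rI$ and $rJ$. The adjunction $(r,K)$ turns a lifting problem of $f$ against $rI$ (resp. $rJ$) into one of $Kf$ against $I$ (resp. $J$), so $f$ has the right lifting property against $rI$ (resp. $rJ$) exactly when $Kf$ is a Reedy trivial fibration (resp. fibration), i.e. when $f$ is a trivial fibration (resp. fibration); the smallness needed for the small object argument is available since $r$ is a left adjoint and, by part $(b)$, ${\bf M}^{\mathcal{C}^{op}}_{\ast}$ is accessible, hence locally presentable, being also cocomplete. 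Finally, left properness transfers along $K$: a pushout in ${\bf M}^{\mathcal{C}^{op}}_{\ast}$ is a connected colimit and so is preserved by $K$ (part $(a)$); since the Reedy model structure on ${\bf M}^{\mathcal{C}^{op}}$ is left proper whenever {\bf M} is, the pushout of a weak equivalence along a cofibration remains a weak equivalence after applying $K$, hence in ${\bf M}^{\mathcal{C}^{op}}_{\ast}$.
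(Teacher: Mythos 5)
Your proposal follows the same route as the paper's proof: the lifting axiom is immediate from $K$ being full and faithful, the factorization axiom is obtained by running Hirschhorn's inductive construction (15.3.16) degree by degree, and you correctly isolate the one genuinely nontrivial point, namely that the naive ``factor $Kf$ and reflect'' strategy fails and that instead one must choose the degree-zero factorization of $\mathrm{id}_{\ast}$ to be $\ast\rightarrow\ast\rightarrow\ast$ --- this is exactly the paper's choice ${\bf Z}_{\alpha}=\ast$ for $\alpha$ of degree zero, and it is legitimate because identities are simultaneously cofibrations and trivial fibrations. Your treatment of cofibrant generation by the sets $r(\mathbb{I})$ and $r(\mathbb{J})$, with the adjunction $(r,K)$ translating lifting problems, and of left properness via $K$ preserving pushouts (connected colimits, 1.2$(a)$), also matches the paper, which in addition you usefully supplement with the bicompleteness check ($K$ creates limits since $\sigma_{0}^{\ast}$ preserves them, and the subcategory is reflective) and the remark on functoriality of the factorization.

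There is, however, one genuine flaw: your justification of the smallness hypothesis for the small object argument appeals to 1.2$(b)$ to conclude that ${\bf M}^{\mathcal{C}^{op}}_{\ast}$ is accessible, hence locally presentable. But 1.2$(b)$ requires {\bf M} to be accessible, and the proposition assumes only that {\bf M} is a model category with ${\bf M}^{\mathcal{C}^{op}}$ cofibrantly generated; for instance {\bf M} could be the category of topological spaces, which is cofibrantly generated but not accessible, and then this step collapses. The correct argument --- and the one the paper encodes in its citation of 1.2$(a)$ --- is that $K$ creates, hence preserves, connected colimits, in particular the transfinite compositions occurring in the small object argument; combined with the natural isomorphism ${\bf M}^{\mathcal{C}^{op}}_{\ast}(rA,-)\cong{\bf M}^{\mathcal{C}^{op}}(A,K-)$, the smallness of the domains of $\mathbb{I}$ and $\mathbb{J}$ transfers directly to the domains of $r(\mathbb{I})$ and $r(\mathbb{J})$, so these sets permit the small object argument with no accessibility hypothesis on {\bf M}. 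With that one substitution your proof is complete and essentially identical to the paper's.
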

\begin{proof}
The lifting axiom of a model category is clear. 
The factorization axiom is shown inductively on the 
degree of the objects of $\mathcal{C}$, exactly as in \cite[15.3.16]{Hi}.
The only difference with \emph{loc. cit.} is in degree zero, when
we choose ${\bf Z}_{\alpha}=\ast$, for every object $\alpha$
of $\mathcal{C}$ of degree zero. 
Suppose now that ${\bf M}^{\mathcal{C}^{op}}$ is 
cofibrantly generated. Let $\mathbb{I}$ and $\mathbb{J}$
be generating sets of cofibrations and trivial cofibrations, 
respectively. By 1.2$(a)$ the sets $r(\mathbb{I})$ and 
$r(\mathbb{J})$ permit the small object argument.
They can be chosen to be generating sets of cofibrations 
and trivial cofibrations for the model category
${\bf M}^{\mathcal{C}^{op}}_{\ast}$. 
Left properness is straightforward.
\end{proof}
\begin{proposition}
Let $F:\mathcal{C}\rightarrow \mathcal{D}$ be a functor 
between Reedy categories which preserves the $0$-filtration.
Let {\bf M} be a model category. Suppose that the induced adjoint 
pair $F_{!}:{\bf M}^{\mathcal{C}^{op}}\rightleftarrows 
{\bf M}^{\mathcal{D}^{op}}:F^{\ast}$ is a Quillen pair.
Then the induced adjoint pair 
$F_{!}':{\bf M}^{\mathcal{C}^{op}}_{\ast}
\rightleftarrows {\bf M}^{\mathcal{D}^{op}}_{\ast}:F^{\ast}$ 
 $(1.2(e))$ is a Quillen pair.
\end{proposition}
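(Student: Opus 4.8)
The plan is to verify the Quillen condition on the right adjoint $F^{\ast}$, exploiting that the model structure on ${\bf M}^{\mathcal{C}^{op}}_{\ast}$ is created by $K$ and that, by $1.2(e)$, the right adjoint $F^{\ast}$ commutes with the inclusions $K$. Since a map in one of the pointed diagram categories is by definition a cofibration, fibration or weak equivalence precisely when its image under $K$ is such in the Reedy structure, and since $K$ is full and faithful, it suffices to show that $F^{\ast}:{\bf M}^{\mathcal{D}^{op}}_{\ast}\to {\bf M}^{\mathcal{C}^{op}}_{\ast}$ carries fibrations to fibrations and trivial fibrations to trivial fibrations.

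To carry this out, let $p$ be a fibration in ${\bf M}^{\mathcal{D}^{op}}_{\ast}$. By the definition of the model structure of Proposition 2.1, $Kp$ is a Reedy fibration in ${\bf M}^{\mathcal{D}^{op}}$. The hypothesis that $(F_{!},F^{\ast})$ is a Quillen pair gives that $F^{\ast}Kp$ is a Reedy fibration in ${\bf M}^{\mathcal{C}^{op}}$. The commutativity of the square in $1.2(e)$ yields $F^{\ast}Kp=KF^{\ast}p$, so $KF^{\ast}p$ is a Reedy fibration, which is exactly the assertion that $F^{\ast}p$ is a fibration in ${\bf M}^{\mathcal{C}^{op}}_{\ast}$. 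Since weak equivalences in the pointed categories are likewise detected by $K$, the identical argument applied to a trivial fibration $p$ (using that $(F_{!},F^{\ast})$ preserves trivial Reedy fibrations) shows that $F^{\ast}p$ is again a trivial fibration. Hence $F^{\ast}$ is right Quillen and $(F_{!}',F^{\ast})$ is a Quillen pair.

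The essential point is the choice to test the condition on $F^{\ast}$ rather than on $F_{!}'$: the right adjoint interacts transparently with $K$ through the relation $KF^{\ast}=F^{\ast}K$, whereas $F_{!}'=rF_{!}K$ involves the reflector $r$ and would force one to analyse how $r$ behaves with respect to cofibrations. With this choice there is no genuine obstacle; the statement reduces to unwinding the definitions of the two model structures through the fully faithful functor $K$ and invoking the hypothesis at the unpointed level. The only mild subtlety worth recording is that both fibrations and weak equivalences, and hence trivial fibrations, are created by $K$, so that a single argument dispatches both halves of the Quillen condition.
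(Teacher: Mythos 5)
Your proof is correct, and it is essentially the argument the paper has in mind: the paper states this proposition without proof, treating it as immediate from the facts you invoke, namely that the model structures on ${\bf M}^{\mathcal{C}^{op}}_{\ast}$ and ${\bf M}^{\mathcal{D}^{op}}_{\ast}$ are created by $K$ (Proposition 2.1), that $KF^{\ast}=F^{\ast}K$ by $1.2(e)$, and that a Quillen pair is detected by the right adjoint preserving fibrations and trivial fibrations. Your closing remark about why one tests $F^{\ast}$ rather than $F_{!}'=rF_{!}K$ is exactly the right observation.
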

We recall from \cite[Definition 3.16.1]{Ba} that a functor between 
Reedy categories is a {\bf morphism} if it preserves the inverse 
and direct subcategories.

Let us fix a small Reedy category $\mathcal{C}$ and a 
degree preserving morphism $p:\mathcal{C}\rightarrow \Delta$
which is a fibration. We denote the fibre category of $p$ over 
$[n]\in \Delta$ by $\mathcal{C}_{n}$, 
and the natural functor $\mathcal{C}_{n}
\rightarrow \mathcal{C}$ by $\sigma_{n}$. One has 
$\mathcal{C}_{0}=F^{0}\mathcal{C}$. 

For each $n\geq 1$, let $\alpha^{k}:[1]\rightarrow [n]$
be the map in $\Delta$ defined as $\alpha^{k}(0)=k$ and 
$\alpha^{k}(1)=k+1$, where $0\leq k<n$. If $n\geq 2$, one has 
$\alpha^{k}d^{0}=\alpha^{k+1}d^{1}$, for $0\leq k\leq n-2$.
For each $c\in \mathcal{C}$ of degree $n\geq 1$,
let $(\alpha^{k})^{\ast}c\rightarrow c$ be a
cartesian lifting of $\alpha^{k}$, $0\leq k<n$, and 
$(d^{i})^{\ast}(\alpha^{k})^{\ast}c\rightarrow (\alpha^{k})^{\ast}c$
be a cartesian lifting of $d^{i}:[0]\rightarrow [1]$, $0\leq i\leq 1$.
We obtain a commutative diagram in $\mathcal{C}$
\[
   \xymatrix{
&  (d^{0})^{\ast}(\alpha^{k})^{\ast}c=
(d^{1})^{\ast}(\alpha^{k+1})^{\ast}c \ar[dl] \ar[dr]\\
(\alpha^{k})^{\ast}c \ar[dr] & & (\alpha^{k+1})^{\ast}c \ar[dl]\\
& c\\
  }
  \]
Let now {\bf M} be a left proper, combinatorial model category.
For $c\in \mathcal{C}$ we denote by $ev_{c}$ the evaluation at $c$
functor ${\bf M}^{\mathcal{C}^{op}}\rightarrow {\bf M}$. $ev_{c}$
has a left adjoint $F_{c}$ which sends $A\in {\bf M}$ to
$$F_{c}^{A}(c')=\underset{\mathcal{C}(c',c)}\coprod A$$
Let $W$ be the set considered in \cite[Proposition A.5]{Du}.
We denote by $\mathfrak{S}$ the set 
$$\{rF_{(\alpha^{0})^{\ast}c}^{A}\underset{rF_{(d^{0})^{\ast}(\alpha^{0})^{\ast}c}^{A}}
\cup...\underset{rF_{(d^{0})^{\ast}(\alpha^{n-2})^{\ast}c}^{A}}
\cup rF_{(\alpha^{n-1})^{\ast}c}^{A}
\rightarrow rF_{c}^{A}\}_{\{c, deg(c)\geq 1\}\times \{A\in W\}}$$
where the map is induced by commutative diagrams as above.
\begin{theorem} 
Let {\bf M} be a left proper, combinatorial model category. The category
${\bf M}^{\mathcal{C}^{op}}_{\ast}$ admits a left proper, 
combinatorial model category structure with 
the cofibrations of ${\bf M}^{\mathcal{C}^{op}}_{\ast}$ as cofibrations 
and the $\mathfrak{S}$-local equivalences as weak equivalences. We 
denote this model structure by ${\bf M}^{\mathcal{C}^{op}}_{\ast,S}$, where
``S'' stands for Segal. If {\bf X} is an object of ${\bf M}^{\mathcal{C}^{op}}_{\ast}$,
then {\bf X} is fibrant in ${\bf M}^{\mathcal{C}^{op}}_{\ast,S}$ 
if and only if {\bf X} is fibrant in ${\bf M}^{\mathcal{C}^{op}}_{\ast}$ 
and for every object $c$ of $\mathcal{C}$ with $deg(c)\geq 1$, the map
$${\bf X}(c)\rightarrow {\bf X}((\alpha^{0})^{\ast}c)
\times...\times {\bf X}((\alpha^{deg(c)-1})^{\ast}c)$$
is a weak equivalence of {\bf M}.
\end{theorem}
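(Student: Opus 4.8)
The plan is to realize ${\bf M}^{\mathcal{C}^{op}}_{\ast,S}$ as the left Bousfield localization of the Reedy model structure of Proposition 2.1 at the set $\mathfrak{S}$. First I would record that this Reedy structure is left proper and combinatorial: left properness and cofibrant generation are given by Proposition 2.1, while local presentability of the underlying category follows because ${\bf M}^{\mathcal{C}^{op}}_{\ast}$ is a reflective subcategory of the locally presentable category ${\bf M}^{\mathcal{C}^{op}}$ (via the adjunction $r\dashv K$ of 1.2$(a)$) which is accessible by 1.2$(b)$. The existence of the localization, its left properness and combinatoriality, and the description of its cofibrations and weak equivalences as the cofibrations of ${\bf M}^{\mathcal{C}^{op}}_{\ast}$ and the $\mathfrak{S}$-local equivalences, are then immediate from the existence theorem for left Bousfield localizations of left proper combinatorial model categories recalled in the appendix. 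This settles the first assertion.

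For the fibrant objects I would use that the fibrant objects of the localization are exactly the Reedy fibrant, $\mathfrak{S}$-local objects, and that a Reedy fibrant ${\bf X}$ is $\mathfrak{S}$-local if and only if for each $g\colon S\to T$ in $\mathfrak{S}$ the induced map $\mathrm{map}(T,{\bf X})\to \mathrm{map}(S,{\bf X})$ of homotopy function complexes is a weak equivalence of simplicial sets; computing these complexes is the heart of the matter. The functor $rF_{c}\colon {\bf M}\to {\bf M}^{\mathcal{C}^{op}}_{\ast}$ is left Quillen, since $ev_{c}$ is right Quillen for the Reedy structure (every Reedy fibration is objectwise a fibration), $K$ is right Quillen by Proposition 2.1, and $ev_{c}K$ is right adjoint to $rF_{c}$. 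Because homotopy function complexes carry a Quillen adjunction to a natural weak equivalence, for $A\in W$ (the objects of $W$ may be taken cofibrant) and ${\bf X}$ Reedy fibrant one obtains $\mathrm{map}(rF^{A}_{c},{\bf X})\simeq \mathrm{map}_{\bf M}(A,{\bf X}(c))$, so the target $T=rF^{A}_{c}$ contributes $\mathrm{map}_{\bf M}(A,{\bf X}(c))$.

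Next I would analyze the source $S$. The intermediate objects $(d^{0})^{\ast}(\alpha^{k})^{\ast}c$ of the cartesian liftings have degree $0$, so for ${\bf X}\in{\bf M}^{\mathcal{C}^{op}}_{\ast}$ they satisfy ${\bf X}((d^{0})^{\ast}(\alpha^{k})^{\ast}c)=\ast$, whence $\mathrm{map}(rF^{A}_{(d^{0})^{\ast}(\alpha^{k})^{\ast}c},{\bf X})\simeq \mathrm{map}_{\bf M}(A,\ast)\simeq \ast$. Since the pushouts defining $S$ are pushouts of cofibrations between cofibrant objects in the left proper category ${\bf M}^{\mathcal{C}^{op}}_{\ast}$, the object $S$ is a homotopy colimit of the spine diagram, so $\mathrm{map}(S,{\bf X})$ is the corresponding homotopy limit over a fence; the contractibility of the intermediate terms collapses this to a homotopy product, giving $\mathrm{map}(S,{\bf X})\simeq \prod_{k}\mathrm{map}_{\bf M}(A,{\bf X}((\alpha^{k})^{\ast}c))\simeq \mathrm{map}_{\bf M}(A,\prod_{k}{\bf X}((\alpha^{k})^{\ast}c))$. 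Under these identifications $g^{\ast}$ becomes $\mathrm{map}_{\bf M}(A,-)$ applied to the Segal map ${\bf X}(c)\to \prod_{k}{\bf X}((\alpha^{k})^{\ast}c)$.

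To finish I would invoke the defining property of $W$ from \cite[Proposition A.5]{Du}: a morphism between fibrant objects of {\bf M} is a weak equivalence precisely when it induces weak equivalences on $\mathrm{map}_{\bf M}(A,-)$ for every $A\in W$. A Reedy fibrant ${\bf X}$ has every ${\bf X}(c)$, and hence each product $\prod_{k}{\bf X}((\alpha^{k})^{\ast}c)$, fibrant in {\bf M}, so this criterion applies to the Segal map; thus ${\bf X}$ is $\mathfrak{S}$-local if and only if each such Segal map is a weak equivalence, as claimed. The main obstacle I anticipate lies in the third step: justifying rigorously that $\mathrm{map}(S,-)$ computes the homotopy limit of the fence and that the intermediate terms are contractible, that is, checking the cofibrancy of the objects $rF^{A}$ and of the spine maps together with the homotopy-colimit property, so that the passage from the iterated pushout to an honest homotopy product is legitimate.
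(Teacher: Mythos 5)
Your proposal is correct, and its skeleton---Smith's theorem applied to the model category of proposition 2.1, then a characterization of the $\mathfrak{S}$-local objects via homotopy function complexes and Dugger's detecting set $W$---is the paper's skeleton as well (the paper models the second half on \cite[Theorem 5.2(c)]{Du}, citing \cite[17.4.16(2)]{Hi} and \cite[Proposition A.5]{Du}). Where you diverge is the computation of $\mathrm{map}(S,{\bf X})$ for $S$ the source of a map in $\mathfrak{S}$. The paper does this strictly rather than homotopically: since every ${\bf X}\in {\bf M}^{\mathcal{C}^{op}}_{\ast}$ has value $\ast$ on degree-zero objects, the gluing conditions along the objects $rF^{A}_{(d^{0})^{\ast}(\alpha^{k})^{\ast}c}$ are vacuous, so a map from the iterated pushout to ${\bf X}$ is \emph{exactly} a map $A\rightarrow {\bf X}((\alpha^{0})^{\ast}c)\times ...\times {\bf X}((\alpha^{deg(c)-1})^{\ast}c)$; this corepresentability yields at once the cofibrancy of the pushout (it lifts against Reedy trivial fibrations, since these are objectwise trivial fibrations and $A\in W$ is cofibrant) and the identification of its function complex, with no recourse to homotopy colimit machinery. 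Your route is viable, and the obstacle you flag does resolve, but the clean reason deserves to be explicit: for $c_{0}$ of degree zero one has $\mathrm{Hom}(rF^{A}_{c_{0}},{\bf X})=\mathrm{Hom}_{\bf M}(A,\ast)=\ast$ for every ${\bf X}\in{\bf M}^{\mathcal{C}^{op}}_{\ast}$, so $rF^{A}_{c_{0}}$ is an \emph{initial} object of ${\bf M}^{\mathcal{C}^{op}}_{\ast}$; hence the spine maps are automatically cofibrations (initial object into the objects $rF^{A}_{(\alpha^{k})^{\ast}c}$, which are cofibrant by your left Quillen argument), and the iterated pushout is simply a coproduct of cofibrant objects, so your fence collapse becomes immediate---or can be bypassed entirely in favor of the paper's one-line corepresentability. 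Two small points: Dugger's $W$ can indeed be taken to consist of cofibrant objects, as you assume; and the existence theorem for left Bousfield localization of left proper combinatorial model categories is not among the facts recalled in the paper's appendix---the paper invokes Smith's theorem from \cite{Du} directly.
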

\begin{proof}
The model structure exists by Smith's theorem \cite{Du} 
applied to the model category ${\bf M}^{\mathcal{C}^{op}}_{\ast}$ 
from proposition 2.1 and the set $\mathfrak{S}$. It remains
to show that the $\mathfrak{S}$-local objects are the ones 
mentioned in the statement of the theorem. The proof is the 
same as the proof of \cite[Theorem 5.2(c)]{Du}.
To begin with, notice that to give a map 
$$rF_{(\alpha^{0})^{\ast}c}^{A}
\underset{rF_{(d^{0})^{\ast}(\alpha^{0})^{\ast}c}^{A}}
\cup...\underset{rF_{(d^{0})^{\ast}(\alpha^{deg(c)-2})^{\ast}c}^{A}}
\cup rF_{(\alpha^{deg(c)-1})^{\ast}c}^{A} \rightarrow {\bf X}$$
where $deg(c)\geq 1$ and $A\in W$, is to give a map $A \rightarrow 
{\bf X}((\alpha^{0})^{\ast}c)
\times...\times {\bf X}((\alpha^{deg(c)-1})^{\ast}c)$.
It follows that $$rF_{(\alpha^{0})^{\ast}c}^{A}
\underset{rF_{(d^{0})^{\ast}(\alpha^{0})^{\ast}c}^{A}}
\cup...\underset{rF_{(d^{0})^{\ast}(\alpha^{deg(c)-2})^{\ast}c}^{A}}
\cup rF_{(\alpha^{deg(c)-1})^{\ast}c}^{A}$$ is cofibrant in 
${\bf M}^{\mathcal{C}^{op}}_{\ast}$. Using
\cite[17.4.16(2)]{Hi} and \cite[Proposition A.5]{Du}
we arrive at the desired characterization of 
$\mathfrak{S}$-local objects.
\end{proof}
\begin{example}
{\rm Let {\bf Cat} be the category of all small 
categories and {\bf S} the category of simplicial sets.
For a small category $\mathcal{C}$, we denote by 
$y_{\mathcal{C}}:\mathcal{C}\rightarrow Set^{\mathcal{C}^{op}}$,
or simply $y$, the Yoneda embedding.

Let $\mathcal{C}$ be a small category and 
$\Psi:\mathcal{C}^{op}\rightarrow {\bf Cat}$ a functor.
The Grothendieck construction $\underset{\mathcal{C}}\int (\Psi)$
is the category with objects $(c,x)$ where $c\in Ob\mathcal{C}$
and $x\in Ob\Psi(c)$, and arrows  $(c,x)\rightarrow (d,y)$ are pairs
$(u,f)$ with $u:c\rightarrow d$ in $\mathcal{C}$ and 
$f:x\rightarrow \Psi(u)(y)$ in $\Psi(c)$. The projection
$\underset{\mathcal{C}}\int (\Psi)\rightarrow \mathcal{C}$ 
is a fibration.

Let $\mathcal{C}$ be a small Reedy category and
let $X\in Set^{\mathcal{C}^{op}}$. The comma category 
$(y\downarrow X)$ is the Grothendieck construction
of the composite functor $\mathcal{C}^{op}\overset{X}\rightarrow 
Set\overset{D}\rightarrow {\bf Cat}$, where $D:Set\rightarrow {\bf Cat}$
is the discrete category functor. $(y\downarrow X)$ becomes a 
Reedy category and the projection $(y\downarrow X)\rightarrow \mathcal{C}$
a degree preserving morphism of Reedy categories.

Let $N:{\bf Cat}\rightarrow {\bf S}$ be the nerve functor.
If $\mathcal{C}$ is a small category, we put $\Delta \mathcal{C}=
(y\downarrow N(\mathcal{C}))$ and $\Delta^{op}\mathcal{C}=
(\Delta\mathcal{C})^{op}$. If 
$([n],c_{0}\rightarrow...\rightarrow c_{n})$
is an object of $\Delta \mathcal{C}$ with $n\geq 2$, the cartesian lifting
of $\alpha^{k}:[1]\rightarrow [n]$ is $([1],c_{k}\rightarrow c_{k+1})
\rightarrow ([n],c_{0}\rightarrow...\rightarrow c_{n})$.

Let $\iota :Set\rightarrow {\bf Cat}$ be 
the indiscrete/chaotic category functor, right 
adjoint to the set of objects functor.
If $S$ is a set, one has $N(\iota S)_{n}=
\underset{[0]\rightarrow [n]}\prod S$. 
We put $\Delta S=\Delta\iota S$. If $([n],s_{0},...,s_{n})$
is an object of $\Delta S$ with $n\geq 2$, the cartesian lifting
of $\alpha^{k}:[1]\rightarrow [n]$ is $([1],s_{k},s_{k+1})
\rightarrow ([n],s_{0},...,s_{n})$. In \cite[Section 5]{Be1}, J. Bergner 
has made an early use of the category $\Delta S$, in the same context as ours. 
The existence of the model structure ${\bf M}^{\Delta^{op}S}_{\ast,S}$ 
was proved in \cite[12.3.2]{Si}, using a different method. An object {\bf X} of 
${\bf M}^{\Delta^{op}S}_{\ast}$ is fibrant in 
${\bf M}^{\Delta^{op}S}_{\ast,S}$ if and only if {\bf X} 
is fibrant in ${\bf M}^{\Delta^{op}S}_{\ast}$ and for every object
$([n],s_{0},...,s_{n})$ of $\Delta S$ $(n\geq 1)$, the map
$${\bf X}(([n],s_{0},...,s_{n}))\rightarrow {\bf X}(([1],s_{0},s_{1}))
\times {\bf X}(([1],s_{1},s_{2}))\times...\times {\bf X}(([1],s_{n-1},s_{n}))$$
is a weak equivalence of {\bf M}.}
\end{example}

\begin{remark}
{\rm One can perform the localization at a different set of maps. 
Here is an example \cite[Section 6]{Be2}.
For each $n\geq 1$, let $\gamma^{k}:[1]\rightarrow [n]$
be the map in $\Delta$ defined as $\gamma^{k}(0)=0$ and 
$\gamma^{k}(1)=k+1$, where $0\leq k<n$. If $n\geq 2$, one has 
$\gamma^{k}d^{1}=\gamma^{k+1}d^{1}$, for $0\leq k\leq n-2$.
For each $c\in \mathcal{C}$ of degree $n\geq 1$,
let $(\gamma^{k})^{\ast}c\rightarrow c$ be a
cartesian lifting of $\gamma^{k}$, $0\leq k<n$, and 
$(d^{1})^{\ast}(\alpha^{k})^{\ast}c\rightarrow (\alpha^{k})^{\ast}c$
be a cartesian lifting of $d^{1}:[0]\rightarrow [1]$.
We obtain a commutative diagram in $\mathcal{C}$
\[
   \xymatrix{
&  (d^{1})^{\ast}(\gamma^{k})^{\ast}c=
(d^{1})^{\ast}(\gamma^{k+1})^{\ast}c \ar[dl] \ar[dr]\\
(\gamma^{k})^{\ast}c \ar[dr] & & (\gamma^{k+1})^{\ast}c \ar[dl]\\
& c\\
  }
  \]
and the set of maps}
$$\{rF_{(\gamma^{0})^{\ast}c}^{A}\underset{rF_{(d^{1})^{\ast}(\gamma^{0})^{\ast}c}^{A}}
\cup...\underset{rF_{(d^{1})^{\ast}(\gamma^{0})^{\ast}c}^{A}}
\cup rF_{(\gamma^{n-1})^{\ast}c}^{A}
\rightarrow rF_{c}^{A}\}_{\{c, deg(c)\geq 1\}\times \{A\in W\}}$$
\end{remark}
We shall study now the behaviour of the model category
${\bf M}^{\mathcal{C}^{op}}_{\ast,S}$ under change 
of diagram and base category.
\begin{proposition}
Let $p:\mathcal{C}\rightarrow \Delta$ and $q:\mathcal{D}\rightarrow \Delta$
be degree preserving morphisms of Reedy categories which are fibrations.
Let $F:\mathcal{C}\rightarrow \mathcal{D}$ be a degree preserving morphism 
of Reedy categories such that $F$ is a fibred functor. 
Let {\bf M} be a left proper, combinatorial model category.
Suppose that the induced adjoint pair
$F_{!}:{\bf M}^{\mathcal{C}^{op}}\rightleftarrows 
{\bf M}^{\mathcal{D}^{op}}:F^{\ast}$ is a Quillen pair.
Then the induced adjoint pair 
$F_{!}':{\bf M}^{\mathcal{C}^{op}}_{\ast,S}
\rightleftarrows {\bf M}^{\mathcal{D}^{op}}_{\ast,S}:F^{\ast}$ 
$(1.2(e))$ is a Quillen pair.
\end{proposition}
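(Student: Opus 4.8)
The plan is to recognize the statement as an instance of the standard criterion by which a Quillen pair descends to left Bousfield localizations: if $(L,R)\colon A\rightleftarrows B$ is a Quillen pair and $L_{S}A$, $L_{T}B$ are the left Bousfield localizations of $A$ and $B$ at sets $S$ and $T$, then $(L,R)$ remains a Quillen pair $L_{S}A\rightleftarrows L_{T}B$ as soon as $L$ carries the elements of $S$ to weak equivalences of $L_{T}B$ (see the appendix). Here $\mathbf{M}^{\mathcal{C}^{op}}_{\ast,S}$ and $\mathbf{M}^{\mathcal{D}^{op}}_{\ast,S}$ are the localizations of $\mathbf{M}^{\mathcal{C}^{op}}_{\ast}$ and $\mathbf{M}^{\mathcal{D}^{op}}_{\ast}$ at the sets $\mathfrak{S}_{\mathcal{C}}$ and $\mathfrak{S}_{\mathcal{D}}$, and by Proposition 2.2 the pair $(F_{!}',F^{\ast})$ is already a Quillen pair between the unlocalized structures. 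Since the domains and codomains of the maps in $\mathfrak{S}_{\mathcal{C}}$ are cofibrant in $\mathbf{M}^{\mathcal{C}^{op}}_{\ast}$, as noted in the proof of Theorem 2.3, no cofibrant replacement is needed, and it suffices to prove that $F_{!}'$ sends each generator of $\mathfrak{S}_{\mathcal{C}}$ to a weak equivalence of $\mathbf{M}^{\mathcal{D}^{op}}_{\ast,S}$.

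The heart of the argument is the explicit computation of $F_{!}'$ on these generators. First, for $c\in\mathcal{C}$ and $A\in\mathbf{M}$ one has $F_{!}F_{c}^{A}\cong F_{F(c)}^{A}$, because $ev_{c}\circ F^{\ast}=ev_{F(c)}$ and taking left adjoints reverses this composite. Next, passing to left adjoints in the relation $F^{\ast}K=KF^{\ast}$ recorded by $1.2(e)$ yields $rF_{!}=F_{!}'r$, so that $F_{!}'(rF_{c}^{A})\cong r(F_{!}F_{c}^{A})\cong rF_{F(c)}^{A}$. Being a left adjoint, $F_{!}'$ preserves the pushouts defining the domains of the maps in $\mathfrak{S}_{\mathcal{C}}$. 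It then remains only to match the indexing objects: since $F$ is degree preserving we have $deg(F(c))=deg(c)$, and since $F$ is a fibred morphism over $\Delta$ it carries a cartesian lifting of $\alpha^{k}$ (resp. of $d^{i}$) to a cartesian lifting of $\alpha^{k}$ (resp. of $d^{i}$) with the corresponding target, so that $F((\alpha^{k})^{\ast}c)\cong(\alpha^{k})^{\ast}F(c)$ and $F((d^{0})^{\ast}(\alpha^{k})^{\ast}c)\cong(d^{0})^{\ast}(\alpha^{k})^{\ast}F(c)$, compatibly with the structure maps of the diagrams above. Combining these identifications, $F_{!}'$ sends the generator of $\mathfrak{S}_{\mathcal{C}}$ indexed by $(c,A)$, with $deg(c)\geq 1$ and $A\in W$, to the generator of $\mathfrak{S}_{\mathcal{D}}$ indexed by $(F(c),A)$.

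Since every element of $\mathfrak{S}_{\mathcal{D}}$ is by construction a weak equivalence of $\mathbf{M}^{\mathcal{D}^{op}}_{\ast,S}$, this shows that $F_{!}'(\mathfrak{S}_{\mathcal{C}})$ consists of weak equivalences of the localized target, and the criterion quoted above yields the claim; the right adjoint $F^{\ast}$ is unchanged by localization, so $(F_{!}',F^{\ast})$ is a Quillen pair $\mathbf{M}^{\mathcal{C}^{op}}_{\ast,S}\rightleftarrows\mathbf{M}^{\mathcal{D}^{op}}_{\ast,S}$. I expect the only nonformal step to be the compatibility of the cartesian liftings with $F$, i.e.\ the isomorphisms $F((\alpha^{k})^{\ast}c)\cong(\alpha^{k})^{\ast}F(c)$ and their analogues for the double liftings, together with the verification that these are compatible with the maps in the diagrams defining $\mathfrak{S}$; once the hypotheses that $F$ is degree preserving and fibred are used to pin these down, everything else is the formal manipulation of left adjoints and the universal property of left Bousfield localization.
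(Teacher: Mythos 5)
Your argument is correct, but it takes a genuinely different route from the paper's, and the two are in a precise sense dual. The paper deduces from Proposition 2.2 that $(F_{!}',F^{\ast})$ is a Quillen pair between the unlocalized structures; since left Bousfield localization does not change the cofibrations, by the criterion \cite[8.5.4(3)]{Hi} it then suffices to show that the \emph{right} adjoint $F^{\ast}$ preserves fibrations between fibrant objects of the localized structures, and this follows from the characterization of fibrant objects in Theorem 2.3 because $F$, being degree preserving and fibred, is compatible with the cartesian liftings $(\alpha^{k})^{\ast}$ and $(d^{i})^{\ast}$, so that $F^{\ast}$ preserves the Segal condition. You instead verify the condition on the \emph{left} adjoint: the identifications $F_{!}F_{c}^{A}\cong F_{F(c)}^{A}$ and $rF_{!}\cong F_{!}'r$, preservation of the defining pushouts, and the same compatibility with cartesian liftings show that $F_{!}'$ carries each generator of $\mathfrak{S}_{\mathcal{C}}$ to an isomorph of a generator of $\mathfrak{S}_{\mathcal{D}}$, after which the universal property of left Bousfield localization applies. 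Both proofs thus rest on exactly the same geometric input; yours is longer but establishes the stronger, reusable fact that $F_{!}'(\mathfrak{S}_{\mathcal{C}})$ lands in $\mathfrak{S}_{\mathcal{D}}$ up to isomorphism, whereas the paper's is a two-line reduction that never computes $F_{!}'$ at all. Two small repairs to your write-up: the localization criterion you invoke is not in the paper's appendix (which contains only Proposition 7.1 and Lemma 7.2) --- it is \cite[3.1.6 and 3.3.18]{Hi}; and the proof of Theorem 2.3 records cofibrancy only of the \emph{domains} of the maps in $\mathfrak{S}$, so for the codomains $rF_{c}^{A}$ you should either observe that the objects of $W$ may be taken cofibrant in {\bf M} (as in Dugger's construction), whence the same lifting argument gives cofibrancy of $rF_{c}^{A}$, or run your argument through cofibrant approximations, which costs nothing since $F_{!}'$ preserves weak equivalences between cofibrant objects.
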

\begin{proof}
From proposition 2.2 it suffices \cite[8.5.4(3)]{Hi} to show 
that $F^{\ast}$ preserves fibrations between fibrant objects. 
But this is clear from the assumptions on $F$.
\end{proof}
We recall \cite[Definition 3.16.3]{Ba} that a morphism $f:\mathcal{C}
\rightarrow \mathcal{D}$ of Reedy categories is a {\bf right fibration}
if for every model category {\bf M}, the adjoint pair
$f_{!}:{\bf M}^{\mathcal{C}^{op}}\rightleftarrows 
{\bf M}^{\mathcal{D}^{op}}:f^{\ast}$ is a Quillen pair.
\begin{lem}
For every map $f:X\rightarrow Y$ of simplicial sets, 
$f:(y\downarrow X)\rightarrow (y\downarrow Y)$ is a right fibration.
\end{lem}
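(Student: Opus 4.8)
The plan is to verify \cite[Definition 3.16.3]{Ba} directly: for an arbitrary model category $\mathbf{M}$ I shall equip $\mathbf{M}^{(y\downarrow X)^{op}}$ and $\mathbf{M}^{(y\downarrow Y)^{op}}$ with their Reedy model structures (these exist for \emph{every} $\mathbf{M}$, since no hypothesis on $\mathbf{M}$ is required, which matches the quantifier in the definition of a right fibration) and show that the right adjoint $f^{\ast}$ of the adjunction $f_{!}\dashv f^{\ast}$ is right Quillen. By \cite[8.5.4]{Hi} it suffices to prove that $f^{\ast}$ preserves Reedy fibrations and Reedy trivial fibrations; by \cite[15.3.3]{Hi} this amounts to controlling the relative matching maps of $f^{\ast}g$ for a given Reedy (trivial) fibration $g$.

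The heart of the argument is the observation that $f\colon(y\downarrow X)\to(y\downarrow Y)$, which sends $([n],x)$ to $([n],f_{n}x)$, is a degree-preserving morphism of Reedy categories lying over $\mathrm{id}_{\Delta}$, and indeed admits unique cartesian liftings, i.e. is a discrete fibration. Concretely, the matching category of an object $([n],x)$ of $(y\downarrow X)$ consists of the proper faces of $([n],x)$, namely the injections $u\colon[m]\hookrightarrow[n]$ with $m<n$ carrying the decoration $([m],u^{\ast}x)$; forgetting the decoration identifies this category with the category of proper faces of $[n]$, independently of $x$. Hence $f$ carries the matching category of $([n],x)$ isomorphically onto that of $([n],f_{n}x)$, the injection $u$ being sent to itself. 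First I would use naturality of $f$, in the form $f_{m}(u^{\ast}x)=u^{\ast}(f_{n}x)$, to conclude that for any $B\in\mathbf{M}^{(y\downarrow Y)^{op}}$ the value of $f^{\ast}B$ on the matching category of $([n],x)$ agrees, object by object, with the value of $B$ on the matching category of $([n],f_{n}x)$. Taking limits this yields a natural isomorphism
\[
M_{([n],x)}(f^{\ast}B)\;\cong\;M_{([n],f_{n}x)}(B).
\]

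Granting this, the relative matching map of $f^{\ast}g$ at $([n],x)$ is, via the identifications $(f^{\ast}A)([n],x)=A([n],f_{n}x)$ and the displayed isomorphism, exactly the relative matching map of $g$ at $f([n],x)=([n],f_{n}x)$. Therefore, if every relative matching map of $g$ is a fibration (resp. trivial fibration) in $\mathbf{M}$, then so is every relative matching map of $f^{\ast}g$; that is, $f^{\ast}$ preserves Reedy fibrations and Reedy trivial fibrations. By the reduction of the first paragraph this shows that $f_{!}\dashv f^{\ast}$ is a Quillen pair for every $\mathbf{M}$, hence that $f$ is a right fibration.

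Everything outside the matching-object computation is formal, so the one point that needs care, and which I expect to be the main obstacle, is the natural isomorphism of matching objects. Its essential input is the combination of two facts: that the matching (boundary) category of $([n],x)$ is determined by $[n]$ alone, and that $f$ lies over $\mathrm{id}_{\Delta}$ and is natural, so that the $X$-decorations are transported correctly by the assignment $u\mapsto u$. The only mild subtlety I anticipate is the bookkeeping with the Reedy structures on the opposite categories $(y\downarrow X)^{op}$ and $(y\downarrow Y)^{op}$, where the matching objects are built from the direct (face) subcategories of $(y\downarrow X)$ and $(y\downarrow Y)$ respectively.
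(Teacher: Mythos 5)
Your proposal is correct, but it takes a genuinely different route from the paper's. The paper works on the \emph{latching} side: it invokes Barwick's combinatorial criterion, reducing the claim to showing that the boundary comma category $\partial(\overrightarrow{(([n],y)\downarrow \overrightarrow{f})}\downarrow \lambda)$ is empty or connected for every monomorphism $\lambda:([n],y)\rightarrow([m],f(x))$, and it proves connectedness by completing any two factorizations of $\lambda$ through a common refinement $([r],f(u^{\ast}x))$ with $\lambda=u\eta$ --- an image/intersection argument in $\Delta$, dual to the argument the paper gives for left fibrations in Lemma 5.7. That criterion is what controls the left adjoint $f_{!}$, a left Kan extension, via latching objects. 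You instead verify the Quillen condition directly on the right adjoint, which has an explicit formula: since $f$ lies over $\mathrm{id}_{\Delta}$ and is a discrete fibration, the matching category of $([n],x)$ (correctly taken on the direct/face side, as appropriate for presheaves) is, after forgetting the uniquely determined decorations $u^{\ast}x$, the category of proper monomorphisms into $[n]$, independent of $x$; together with naturality $f_{m}(u^{\ast}x)=u^{\ast}(f_{n}x)$ this gives $M_{([n],x)}(f^{\ast}B)\cong M_{([n],f_{n}x)}(B)$ strictly compatibly with the cone maps, so the relative matching maps of $f^{\ast}g$ are, up to canonical isomorphism, relative matching maps of $g$, and $f^{\ast}$ preserves Reedy fibrations and trivial fibrations for every $\mathbf{M}$. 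Your argument succeeds precisely because the matching side is rigid under $f$, whereas the latching side --- degeneracy data of $x$ versus of $f_{n}x$, which $f$ does not transport isomorphically --- is where the nontrivial combinatorics lives; that is exactly what the paper's connectivity argument handles on the other adjoint. What each buys: the paper's method is the generic, reusable criterion for morphisms of Reedy categories (it reappears in Lemma 5.7), while yours is more elementary and self-contained, bypassing the Kan-extension analysis entirely, at the cost of exploiting special structure (a discrete fibration over $\mathrm{id}_{\Delta}$) that a general morphism would not have.
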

\begin{proof}
It suffices to prove that $\partial(\overrightarrow {(([n],y)\downarrow 
\overrightarrow{f})}\downarrow \lambda)$ is empty or connected,
where $\lambda:([n],y)\rightarrow ([m],f(x))$ and $\lambda$
is a monomorphism. A commutative diagram
\[
   \xymatrix{
& ([n],y) \ar[dr]^{\beta} \ar[dl]_{\alpha} \ar[dd]^{\lambda}\\
([p],f(a)) \ar[dr]_{\gamma} & & ([q],f(b)) \ar[dl]^{\delta}\\
& ([m],f(x))\\
}
  \]
in which $\alpha,\beta,\delta,\gamma$ are monomorphisms
can be completed to a commutative diagram
\[
   \xymatrix{
& ([n],y) \ar[dr]^{\beta} \ar[dl]_{\alpha} \ar[d]^{\eta}\\
([p],f(a)) \ar[dr]_{\gamma} & ([r],f(u^{\ast}x)) 
\ar[l]_{\theta} \ar[d]^{u} \ar[r]^{\varepsilon} & ([q],f(b)) \ar[dl]^{\delta}\\
& ([m],f(x))\\
}
  \]
in which $\theta,\varepsilon,\eta,u$ are monomorphisms
and $\lambda=u\eta$.
\end{proof}
\begin{corollary}
For any map $f:X\rightarrow Y$ of simplicial sets and any 
model category {\bf M}, the adjoint pair $$(y\downarrow f)_{!}:
{\bf M}^{(y\downarrow X)^{op}}\rightleftarrows
{\bf M}^{(y\downarrow Y)^{op}}:(y\downarrow f)^{\ast}$$ 
is a Quillen pair. In particular, if $f:S\rightarrow T$ is a function, 
the adjoint pair $$f_{!}':{\bf M}^{\Delta^{op}S}_{\ast}\rightleftarrows
{\bf M}^{\Delta^{op}T}_{\ast}:f^{\ast}$$ is a Quillen pair.
\end{corollary}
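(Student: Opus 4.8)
The plan is to read off the first assertion directly from lemma 2.7 together with the definition of a right fibration, and then to derive the ``in particular'' statement by feeding this into proposition 2.2. Almost all of the homotopical content has already been placed in lemma 2.7, so what remains is essentially a matter of assembly.

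For the first assertion, recall that a morphism $g:\mathcal{C}\rightarrow \mathcal{D}$ of Reedy categories is, by definition, a right fibration exactly when the adjoint pair $g_{!}:{\bf M}^{\mathcal{C}^{op}}\rightleftarrows {\bf M}^{\mathcal{D}^{op}}:g^{\ast}$ is a Quillen pair for every model category ${\bf M}$. Lemma 2.7 asserts that $(y\downarrow f):(y\downarrow X)\rightarrow (y\downarrow Y)$ is a right fibration, and hence, by that very definition, $(y\downarrow f)_{!}\dashv (y\downarrow f)^{\ast}$ is a Quillen pair for every ${\bf M}$. This is precisely the first claim, so no further argument is needed here.

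For the second assertion, I would first unwind the functor underlying $f_{!}'$. A function $f:S\rightarrow T$ induces a functor $\iota f:\iota S\rightarrow \iota T$, hence a map of simplicial sets $N(\iota f):N(\iota S)\rightarrow N(\iota T)$, and finally a morphism of Reedy categories $F:=(y\downarrow N(\iota f)):\Delta S\rightarrow \Delta T$, where one uses $\Delta S=(y\downarrow N(\iota S))$ and $\Delta T=(y\downarrow N(\iota T))$. Applying the first assertion to the map $N(\iota f)$ shows that $F_{!}\dashv F^{\ast}$ is a Quillen pair ${\bf M}^{\Delta^{op}S}\rightleftarrows {\bf M}^{\Delta^{op}T}$ on the full diagram categories.

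It then remains to descend this Quillen pair to the subcategories ${\bf M}^{\Delta^{op}S}_{\ast}$ and ${\bf M}^{\Delta^{op}T}_{\ast}$, and for this I would invoke proposition 2.2. Since $F$ is degree preserving it sends the degree-zero objects of $\Delta S$ to degree-zero objects of $\Delta T$ and hence preserves the $0$-filtration, while by the previous paragraph the associated pair on the full categories is Quillen; proposition 2.2 then yields that $F_{!}'\dashv F^{\ast}$ is a Quillen pair ${\bf M}^{\Delta^{op}S}_{\ast}\rightleftarrows {\bf M}^{\Delta^{op}T}_{\ast}$, which under the identification $f_{!}'=F_{!}'$ is exactly the desired conclusion. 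The only step requiring genuine care is the bookkeeping in this last reduction, namely confirming that $(y\downarrow N(\iota f))$ is a degree-preserving morphism of Reedy categories that preserves the $0$-filtration, so that the hypotheses of proposition 2.2 are met; there is no substantive homotopical obstacle, since the essential work was already carried out in establishing lemma 2.7.
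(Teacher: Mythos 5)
Your proposal is correct and coincides with the paper's own (implicit) argument: the corollary is stated without proof precisely because the first assertion is immediate from Lemma 2.7 together with the definition of a right fibration, and the ``in particular'' part is obtained, as you do, by applying Proposition 2.2 to the degree-preserving morphism $(y\downarrow N(\iota f)):\Delta S\rightarrow \Delta T$, which preserves the $0$-filtration. Your bookkeeping check on this last point is exactly the right thing to verify, and there is nothing missing.
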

\begin{corollary}
Let {\bf M} be a left proper, combinatorial model category. If $f:S\rightarrow T$
is a function, then the adjoint pair $f_{!}':{\bf M}^{\Delta^{op}S}_{\ast,S}
\rightleftarrows {\bf M}^{\Delta^{op}T}_{\ast,S}:f^{\ast}$ 
is a Quillen pair.
\end{corollary}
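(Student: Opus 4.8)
The plan is to exhibit this corollary as an instance of proposition 2.6, drawing the ambient Quillen-pair hypothesis it requires from corollary 2.8.

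First I would make the relevant morphism of Reedy categories explicit. Writing $\iota$ for the chaotic category functor and $N$ for the nerve, the function $f:S\to T$ induces a functor $\iota f:\iota S\to \iota T$, hence a map of simplicial sets $N(\iota f):N(\iota S)\to N(\iota T)$, and finally a functor $F:=(y\downarrow N(\iota f)):\Delta S\to \Delta T$ between the Grothendieck constructions $\Delta S=(y\downarrow N(\iota S))$ and $\Delta T=(y\downarrow N(\iota T))$. On objects $F$ sends $([n],(s_{0},\dots,s_{n}))$ to $([n],(f(s_{0}),\dots,f(s_{n})))$, and it is precisely the functor whose primed adjoint pair is the $(f_{!}',f^{\ast})$ of the statement.

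Next I would check the hypotheses of proposition 2.6 for $F$. By example 2.4 the projections $p:\Delta S\to\Delta$ and $q:\Delta T\to\Delta$ are degree preserving morphisms of Reedy categories and are fibrations, and $F$ commutes with them in the sense that $qF=p$. Since the Reedy degree of an object of $(y\downarrow X)$ is the degree of its image in $\Delta$, the identity $qF=p$ already forces $F$ to preserve degree and to send the direct (resp. inverse) subcategory into the direct (resp. inverse) subcategory, so $F$ is a degree preserving morphism of Reedy categories. The only condition meriting a second look is that $F$ be a fibred functor; here the point is that the diagrams $N(\iota S)$ and $N(\iota T)$ take values in discrete categories, so the fibres of $p$ and $q$ are discrete and every arrow of $\Delta S$ and of $\Delta T$ is cartesian over $\Delta$. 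Hence $F$ preserves cartesian arrows automatically and is a fibred functor.

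Finally, the remaining hypothesis of proposition 2.6, that $F_{!}:{\bf M}^{(\Delta S)^{op}}\rightleftarrows {\bf M}^{(\Delta T)^{op}}:F^{\ast}$ be a Quillen pair, is exactly the first assertion of corollary 2.8 applied to the simplicial map $N(\iota f)$. Proposition 2.6 then delivers that $F_{!}'=f_{!}':{\bf M}^{\Delta^{op}S}_{\ast,S}\rightleftarrows {\bf M}^{\Delta^{op}T}_{\ast,S}:f^{\ast}$ is a Quillen pair, which is the assertion. I expect no serious obstacle: the substantive content is already contained in proposition 2.6 and corollary 2.8, and the sole genuine verification, that $F$ is fibred, is rendered vacuous by the discreteness of the fibres.
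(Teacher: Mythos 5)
Your proposal is correct and is precisely the paper's own proof, which reads in its entirety ``Apply corollary 2.8 and proposition 2.6'': you draw the Quillen-pair hypothesis on the unlocalized diagram categories from the first assertion of corollary 2.8 and feed it into proposition 2.6, exactly as the paper does. The verifications you spell out---that $(y\downarrow N(\iota f)):\Delta S\rightarrow \Delta T$ is a degree preserving morphism of Reedy categories over $\Delta$ and that it is automatically a fibred functor because the fibrations $p$ and $q$ are discrete---are exactly the checks the paper leaves implicit, and they are right.
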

\begin{proof}
Apply corollary 2.8 and proposition 2.6.
\end{proof}
\begin{proposition} {\rm (compare with \cite[12.6.2 and 14.7.3]{Si})}
Let $F:{\bf M}_{1}\rightleftarrows {\bf M}_{2}:G$ be a Quillen pair
between left proper, combintorial model categories. Then the induced
adjoint pairs $F':{\bf M}^{\mathcal{C}^{op}}_{1\ \ast}
\rightleftarrows {\bf M}^{\mathcal{C}^{op}}_{2\ \ast}:G$
and $F':{\bf M}^{\mathcal{C}^{op}}_{1\ \ast,S}
\rightleftarrows {\bf M}^{\mathcal{C}^{op}}_{2\ \ast,S}:G$ 
$(1.2(g))$ are Quillen pairs. If  
$F':{\bf M}^{\mathcal{C}^{op}}_{1\ \ast}
\rightarrow {\bf M}^{\mathcal{C}^{op}}_{2\ \ast}$
preserves weak equivalences then so does
$F':{\bf M}^{\mathcal{C}^{op}}_{1\ \ast,S}
\rightarrow {\bf M}^{\mathcal{C}^{op}}_{2\ \ast,S}$.
\end{proposition}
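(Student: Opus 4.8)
The plan is to establish the three assertions in order, reducing each Segal statement to the corresponding Reedy statement together with the description of fibrant objects in Theorem 2.4. First I would treat the Reedy case. Since $(F,G)$ is a Quillen pair, the levelwise adjunction $F:{\bf M}_{1}^{\mathcal{C}^{op}}\rightleftarrows {\bf M}_{2}^{\mathcal{C}^{op}}:G$ is a Quillen pair for the Reedy model structures. By $1.2(g)$ the right adjoint $G$ between the pointed categories is the restriction of the levelwise $G$ and satisfies $K_{1}G=GK_{2}$. As the fibrations and trivial fibrations of ${\bf M}^{\mathcal{C}^{op}}_{i\ \ast}$ are created by $K_{i}$ (proposition 2.1) and the levelwise $G$ preserves Reedy fibrations and trivial fibrations, it follows that $G$ preserves fibrations and trivial fibrations between the pointed categories; hence $(F',G)$ is a Quillen pair on the Reedy structures.

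For the Segal case, recall that ${\bf M}^{\mathcal{C}^{op}}_{\ast,S}$ is a left Bousfield localization of ${\bf M}^{\mathcal{C}^{op}}_{\ast}$. By the Reedy case just proved and \cite[8.5.4(3)]{Hi} it suffices to show that $G$ preserves fibrations between fibrant objects of the Segal structures, and the heart of the matter is that $G$ sends $S$-fibrant objects to $S$-fibrant objects. Let ${\bf X}$ be $S$-fibrant in ${\bf M}^{\mathcal{C}^{op}}_{2\ \ast,S}$; by Theorem 2.4 it is Reedy fibrant and, for each $c$ with $deg(c)\geq 1$, the Segal map
$${\bf X}(c)\rightarrow {\bf X}((\alpha^{0})^{\ast}c)\times...\times {\bf X}((\alpha^{deg(c)-1})^{\ast}c)$$
is a weak equivalence of ${\bf M}_{2}$. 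Reedy fibrancy gives that $G{\bf X}$ is Reedy fibrant (by the Reedy case) and that every ${\bf X}(c)$ is fibrant in ${\bf M}_{2}$. Since $G$ is computed levelwise (it commutes with $K$) and preserves the finite products occurring in the Segal maps (being a right adjoint), the Segal map of $G{\bf X}$ at $c$ is $G$ applied to the Segal map of ${\bf X}$ at $c$, that is, $G$ applied to a weak equivalence between fibrant objects of ${\bf M}_{2}$. A right Quillen functor preserves weak equivalences between fibrant objects, so this map is a weak equivalence of ${\bf M}_{1}$, and Theorem 2.4 shows that $G{\bf X}$ is $S$-fibrant. Finally, by standard properties of the localization a fibration $f$ between $S$-fibrant objects is a Reedy fibration, so $Gf$ is a Reedy fibration (Reedy case) between the $S$-fibrant objects $G{\bf X}$ and $G{\bf Y}$, hence a fibration of the Segal structure.

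For the last assertion, suppose that $F':{\bf M}^{\mathcal{C}^{op}}_{1\ \ast}\rightarrow {\bf M}^{\mathcal{C}^{op}}_{2\ \ast}$ preserves weak equivalences, and let $g:{\bf A}\rightarrow {\bf B}$ be an $S$-local equivalence. I would take functorial Reedy cofibrant replacements $Q{\bf A}\rightarrow {\bf A}$ and $Q{\bf B}\rightarrow {\bf B}$; these are Reedy trivial fibrations, hence $S$-local equivalences, and the induced $Qg:Q{\bf A}\rightarrow Q{\bf B}$ is then an $S$-local equivalence between cofibrant objects (the cofibrations of the Reedy and Segal structures coincide). By Ken Brown's lemma applied to the left Quillen functor $F'$ of the Segal structures (second assertion), $F'(Qg)$ is an $S$-local equivalence. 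On the other hand, $F'$ sends the Reedy weak equivalences $Q{\bf A}\rightarrow {\bf A}$ and $Q{\bf B}\rightarrow {\bf B}$ to Reedy weak equivalences by hypothesis, and these are in particular $S$-local equivalences; two-out-of-three in the resulting commutative square then forces $F'g$ to be an $S$-local equivalence.

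I expect the only genuine obstacle to be the preservation of $S$-fibrant objects in the second paragraph, where everything hinges on three properties of the right adjoint $G$: that it is computed levelwise, that it preserves the finite products appearing in the Segal maps, and that it preserves weak equivalences between the fibrant objects ${\bf X}(c)$ supplied by Reedy fibrancy. The remaining steps are formal consequences of the theory of left Bousfield localizations.
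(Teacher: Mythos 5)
Your proposal is correct and takes essentially the same route as the paper's (much terser) proof: the Reedy pair is immediate, the Segal pair is reduced via \cite[8.5.4(3)]{Hi} to showing that $G$ preserves fibrations between fibrant objects, which you verify exactly as the paper intends (the levelwise right adjoint preserves the Segal maps and, by Ken Brown's lemma, weak equivalences between fibrant objects), and the final assertion is the standard cofibrant-replacement argument the paper dismisses as ``general facts about left Bousfield localizations.'' The only slip is a reference: the characterization of fibrant objects in ${\bf M}^{\mathcal{C}^{op}}_{\ast,S}$ that you use is Theorem 2.3, not 2.4 (2.4 is an example).
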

\begin{proof}
It is clear that the first pair is a Quillen pair.
For the second  it suffices \cite[8.5.4(3)]{Hi} 
to show that $G$ preserves fibrations between fibrant 
objects. But this is clear since $G$ preserves weak 
equivalences between fibrant objects.
The rest is a consequence of general facts about
left Bousfield localizations \cite[Chapter 3]{Hi}.
\end{proof}
Let us illustrate the last part of the previous proposition.
If $\mathcal{C}$ is an elegant Reedy category
\cite[Definition 3.5]{BR} and the cofibrations of
${\bf M}_{1}$ are the monomorphisms, then
$F':{\bf M}^{\mathcal{C}^{op}}_{1\ \ast}
\rightarrow {\bf M}^{\mathcal{C}^{op}}_{2\ \ast}$
preserves weak equivalences.

\subsection{The projective model structure}

\begin{proposition}
\cite[11.4.2]{Si} Let {\bf M} be a cofibrantly generated
model category. Let $\mathcal{C}$ be a small Reedy category
and $S$ a set. The category ${\bf M}^{\mathcal{C}^{op}}_{\ast}$
admits a cofibrantly generated model category
structure obtained by transfer from the projective
model structure on ${\bf M}^{\mathcal{C}^{op}}$
via the adjunction $(r,K)$ $(1.2(a))$. We denote 
this model structure by ${\bf M}^{\mathcal{C}^{op}}_{\ast,p}$.
${\bf M}^{\mathcal{C}^{op}}_{\ast,p}$ is left proper
if {\bf M} is. 
\end{proposition}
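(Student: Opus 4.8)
The plan is to produce this structure by transfer along the reflection $r\dashv K$ of 1.2$(a)$, taking ${\bf M}^{\mathcal{C}^{op}}$ with its projective model structure as the source. Concretely, I would declare a map of ${\bf M}^{\mathcal{C}^{op}}_{\ast}$ to be a weak equivalence, respectively a fibration, exactly when $K$ takes it to one, and propose $r(\mathbb{I})$ and $r(\mathbb{J})$ as generating cofibrations and generating trivial cofibrations; I would then check the hypotheses of the recognition theorem of Kan for cofibrantly generated model categories \cite[\S 11.3]{Hi}. Completeness and cocompleteness of ${\bf M}^{\mathcal{C}^{op}}_{\ast}$ are immediate from 1.2$(a)$, since it is a reflective subcategory of ${\bf M}^{\mathcal{C}^{op}}$, and the sets $r(\mathbb{I})$, $r(\mathbb{J})$ permit the small object argument exactly as already noted in the proof of Proposition 2.1. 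Because $K$ is full and faithful, the two out of three and retract axioms for weak equivalences, as well as the lifting axioms, are then formal consequences of the corresponding facts in ${\bf M}^{\mathcal{C}^{op}}$.

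The one substantial hypothesis is acyclicity: that $K$ sends every relative $r(\mathbb{J})$-cell complex to a weak equivalence of ${\bf M}^{\mathcal{C}^{op}}$. I would reduce this to a single objectwise computation. Since $K$ creates connected colimits (1.2$(a)$), it preserves pushouts and transfinite compositions, so $K$ takes a relative $r(\mathbb{J})$-cell complex to a relative cell complex on the maps $KrF_{c}^{A}\to KrF_{c}^{B}$, for $A\to B$ a generating trivial cofibration of ${\bf M}$ and $c$ an object of $\mathcal{C}$. As the weak equivalences of the projective structure are the objectwise ones, and objectwise trivial cofibrations are stable under pushout and transfinite composition, it is enough to prove that each $KrF_{c}^{A}\to KrF_{c}^{B}$ is an objectwise trivial cofibration.

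This last point is the crux, and I would settle it directly from the pushout formula for $r$ in 1.2$(a)$. Evaluating at an object $c'$, the counit $\sigma_{0!}\sigma_{0}^{\ast}F_{c}^{A}\to F_{c}^{A}$ is a coproduct of copies of $\mathrm{id}_{A}$ indexed by the factorizations of morphisms $c'\to c$ through an object of degree zero. The decisive combinatorial fact is that in a Reedy category such a factorization is unique when it exists, since it must run through the middle object of the unique Reedy factorization; hence this counit is simply the inclusion of the summands indexed by those $\varphi\in\mathcal{C}(c',c)$ whose Reedy middle object has degree zero, each occurring once. Forming the pushout against $\sigma_{0!}\sigma_{0}^{\ast}\ast$ therefore replaces exactly these summands by $\ast$ and leaves the others alone, so $KrF_{c}^{A}\to KrF_{c}^{B}$ is at every $c'$ a coproduct of copies of $A\to B$ together with identities of $\ast$, hence a trivial cofibration of ${\bf M}$. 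I expect this computation to be the only real obstacle: the tempting shortcut of reading the weak equivalence off the defining pushout square via the gluing lemma would require those squares to be homotopy pushouts, that is, left properness of ${\bf M}$, which is not assumed here; it is precisely the uniqueness of the Reedy factorization that makes the pushout collapse to an honest coproduct and lets the argument run with no properness hypothesis.

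For the final assertion I would run the same computation with a generating cofibration $A\to B$ in place of a trivial one, obtaining that each $KrF_{c}^{A}\to KrF_{c}^{B}$ is an objectwise cofibration; since objectwise cofibrations are closed under pushout, transfinite composition and retracts, and $K$ preserves all of these, $K$ carries every cofibration of ${\bf M}^{\mathcal{C}^{op}}_{\ast,p}$ to an objectwise cofibration. Given a pushout in ${\bf M}^{\mathcal{C}^{op}}_{\ast,p}$ of a weak equivalence along a cofibration, applying $K$, which preserves the pushout, yields an objectwise pushout of an objectwise weak equivalence along an objectwise cofibration, and left properness of ${\bf M}$, applied objectwise, finishes the proof.
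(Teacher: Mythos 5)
Your proof is correct, and it is more self-contained than what the paper offers: the paper gives no argument for this proposition at all, delegating it entirely to Simpson's Theorem 11.4.2, whose proof (stated there for general unital algebraic diagram theories) runs along essentially the transfer lines you reconstruct. The crux of your argument is exactly right and is the real content hiding behind the citation: by uniqueness of Reedy factorizations, a factorization of $\varphi\colon c'\to c$ through a degree-zero object, when it exists, \emph{is} the Reedy factorization (the direct part of a map into a degree-zero object and the inverse part of a map out of one are forced to be identities, since degree zero is minimal and $F^{0}\mathcal{C}$ is discrete), so the counit $\sigma_{0!}\sigma_{0}^{\ast}F_{c}^{A}\rightarrow F_{c}^{A}$ is objectwise a summand inclusion, the defining pushout for $r$ collapses to an honest coproduct, and each $rF_{c}^{A}\rightarrow rF_{c}^{B}$ is objectwise a coproduct of copies of $A\rightarrow B$ and identities; acyclicity and the smallness conditions then follow as you say, with no properness hypothesis, and your closure argument for left properness (cofibrations of the transferred structure are objectwise cofibrations, then apply left properness of ${\bf M}$ objectwise, using that $K$ creates the relevant connected colimits) is also sound. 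One harmless imprecision: the pushout does not literally replace \emph{each} middle-degree-zero summand by a $\ast$; its degree-zero part is $\coprod_{c_{0}}\coprod_{g\in\mathcal{C}(c',c_{0})}\ast$, so summands $(g,h)$ and $(g,h')$ with the same $g$ get identified to a single point, and extra points appear for those $g$ admitting no $h\colon c_{0}\to c$ at all (this is also what makes $rF_{c}^{A}$ take the value $\ast$ on objects of degree zero, as it must). Since the comparison map $rF_{c}^{A}\rightarrow rF_{c}^{B}$ is the identity on this part, your description of it as a coproduct of copies of $A\rightarrow B$ together with identities of $\ast$, hence a (trivial) cofibration of ${\bf M}$ objectwise, stands exactly as written.
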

It follows from the proof of \cite[11.4.2]{Si} that a cofibrant
object of ${\bf M}^{\mathcal{C}^{op}}_{\ast,p}$ is
objectwise cofibrant. We also observe that the identity pair
$Id:{\bf M}^{\mathcal{C}^{op}}_{\ast,p}\rightleftarrows 
{\bf M}^{\mathcal{C}^{op}}_{\ast}:Id$
is a Quillen pair.

For the next result, recall from 2.1 the set $\mathfrak{S}$.
\begin{theorem}
{\rm (compare with \cite[12.1.1]{Si})} Let {\bf M}
be a left proper, combinatorial model category. Let $\mathcal{C}$ 
be a small Reedy category and $p:\mathcal{C}\rightarrow \Delta$
a degree preserving morphism which is a fibration. The category 
${\bf M}^{\mathcal{C}^{op}}_{\ast}$ admits a left proper, 
combinatorial model category structure with 
the cofibrations of ${\bf M}^{\mathcal{C}^{op}}_{\ast,p}$ 
as cofibrations and the $\mathfrak{S}$-local equivalences 
as weak equivalences. We denote this model structure by 
${\bf M}^{\mathcal{C}^{op}}_{\ast,p,S}$, 
where ``S'' stands for Segal. If {\bf X} is an object of 
${\bf M}^{\mathcal{C}^{op}}_{\ast}$, then {\bf X} is fibrant in 
${\bf M}^{\mathcal{C}^{op}}_{\ast,p,S}$ 
if and only if {\bf X} is fibrant in ${\bf M}^{\mathcal{C}^{op}}_{\ast,p}$ 
and for every object $c$ of $\mathcal{C}$ with $deg(c)\geq 1$, 
the map $${\bf X}(c)\rightarrow {\bf X}((\alpha^{0})^{\ast}c)
\times...\times {\bf X}((\alpha^{deg(c)-1})^{\ast}c)$$
is a weak equivalence of {\bf M}.
\end{theorem}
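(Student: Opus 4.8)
The plan is to realize ${\bf M}^{\mathcal{C}^{op}}_{\ast,p,S}$ as the left Bousfield localization of the projective model structure ${\bf M}^{\mathcal{C}^{op}}_{\ast,p}$ of proposition 2.11 at the set $\mathfrak{S}$, in exact parallel with theorem 2.3. By proposition 2.11 the category ${\bf M}^{\mathcal{C}^{op}}_{\ast,p}$ is cofibrantly generated and left proper, and it is combinatorial using $1.2(b)$; Smith's theorem, in the form used in the proof of theorem 2.3 \cite{Du}, then produces the localization. Its cofibrations are those of ${\bf M}^{\mathcal{C}^{op}}_{\ast,p}$ and its weak equivalences are the $\mathfrak{S}$-local equivalences, while left properness and combinatoriality are automatic for localizations of left proper combinatorial model categories. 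This settles the existence half of the statement.

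For the description of the fibrant objects I would first rerun the formal part of the proof of theorem 2.3: to give a map from the source $rF_{(\alpha^{0})^{\ast}c}^{A}\cup\cdots\cup rF_{(\alpha^{deg(c)-1})^{\ast}c}^{A}$ to ${\bf X}$ is the same as to give a map $A\to{\bf X}((\alpha^{0})^{\ast}c)\times\cdots\times{\bf X}((\alpha^{deg(c)-1})^{\ast}c)$, and a map from $rF_{c}^{A}$ to ${\bf X}$ is a map $A\to{\bf X}(c)$. This adjunction computation is insensitive to the model structure and carries over unchanged. The hard part is the next step. In theorem 2.3 one upgrades these hom-sets to homotopy function complexes by using that the source is cofibrant (there, Reedy cofibrant) and then applies \cite[17.4.16(2)]{Hi} and \cite[Proposition A.5]{Du}. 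Projectively this is precisely where a direct imitation fails: the individual pieces $rF_{(\alpha^{k})^{\ast}c}^{A}$ are projectively cofibrant, because $A\in W$ is cofibrant and both $F_{(\alpha^{k})^{\ast}c}$ and $r$ are left Quillen for the projective structures, but the source is assembled by gluing them along the degree-zero objects $(d^{0})^{\ast}(\alpha^{k})^{\ast}c$, and these gluing maps need not be projective cofibrations. So I do not expect the source to be cofibrant in ${\bf M}^{\mathcal{C}^{op}}_{\ast,p}$, and the cofibrancy argument of theorem 2.3 cannot be copied verbatim.

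The way past this obstacle is to notice that the two model structures on ${\bf M}^{\mathcal{C}^{op}}_{\ast}$ have the same weak equivalences: by proposition 2.1 and proposition 2.11 a map $f$ is a weak equivalence in either structure exactly when $Kf$ is an objectwise weak equivalence. Hence the identity pair $Id:{\bf M}^{\mathcal{C}^{op}}_{\ast,p}\rightleftarrows{\bf M}^{\mathcal{C}^{op}}_{\ast}:Id$ recorded after proposition 2.11 is a Quillen equivalence, and the two structures possess the same homotopy function complexes \cite[Chapter 17]{Hi}. An object is $\mathfrak{S}$-local if and only if it is fibrant and the map of homotopy function complexes induced by each $g\in\mathfrak{S}$ is a weak equivalence, a condition that depends only on the weak equivalences; therefore the $\mathfrak{S}$-local objects are the same for the projective and the Reedy structures. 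By the computation already performed in theorem 2.3, this condition says exactly that the Segal maps ${\bf X}(c)\to{\bf X}((\alpha^{0})^{\ast}c)\times\cdots\times{\bf X}((\alpha^{deg(c)-1})^{\ast}c)$ are weak equivalences of ${\bf M}$. Since a fibrant object of ${\bf M}^{\mathcal{C}^{op}}_{\ast,p,S}$ is by definition an $\mathfrak{S}$-local object that is fibrant in ${\bf M}^{\mathcal{C}^{op}}_{\ast,p}$, intersecting the two conditions gives the stated characterization, with projective fibrancy now in place of the Reedy fibrancy of theorem 2.3. The crux of the argument is thus this transport of the local-object description across the two structures, made possible by the coincidence of their weak equivalences.
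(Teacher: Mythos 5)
Your existence half is exactly the paper's argument: Smith's theorem applied to ${\bf M}^{\mathcal{C}^{op}}_{\ast,p}$ and the set $\mathfrak{S}$, as in theorem 2.3 with proposition 2.11 replacing proposition 2.1. The trouble is in the second half, and it starts with a misdiagnosis: the obstacle you describe is not there. The sources of the maps of $\mathfrak{S}$ \emph{are} cofibrant in ${\bf M}^{\mathcal{C}^{op}}_{\ast,p}$, and by the very argument theorem 2.3 uses --- which is not a gluing argument but a representability one. Since a map from the glued object $G$ into ${\bf X}$ is the same as a map $A\rightarrow {\bf X}((\alpha^{0})^{\ast}c)\times\dots\times {\bf X}((\alpha^{deg(c)-1})^{\ast}c)$, a lifting problem for $G$ against a trivial fibration ${\bf Y}\rightarrow {\bf X}$ of ${\bf M}^{\mathcal{C}^{op}}_{\ast,p}$ translates into a lifting problem for $A$ against $\prod_{k}{\bf Y}((\alpha^{k})^{\ast}c)\rightarrow \prod_{k}{\bf X}((\alpha^{k})^{\ast}c)$; trivial fibrations of the transferred structure are objectwise trivial fibrations, products of trivial fibrations are trivial fibrations, and $A\in W$ is cofibrant, so the lift exists. (More structurally: for a degree-zero object $c_{0}$ one computes $rF^{A}_{c_{0}}=\sigma_{0!}\ast$, the initial object of ${\bf M}^{\mathcal{C}^{op}}_{\ast}$, because there are no non-identity maps between degree-zero objects of a Reedy category; so $G$ is simply a coproduct of the objects $rF^{A}_{(\alpha^{k})^{\ast}c}$, each projectively cofibrant since $A$ is cofibrant and $F_{(\alpha^{k})^{\ast}c}$ and $r$ are left Quillen. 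Your worry about ``gluing along degree-zero objects'' evaporates.) Hence the whole proof of theorem 2.3, including the applications of \cite[17.4.16(2)]{Hi} and \cite[Proposition A.5]{Du}, runs verbatim in the projective setting --- projective fibrancy of ${\bf X}$ supplies the objectwise fibrancy in {\bf M} that those two results require --- and this is precisely what the paper's one-line proof asserts.

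Your detour, besides being unnecessary, is incomplete as written. The transport idea itself is sound and in fact appears in the paper: the coincidence of the weak equivalences of the two underlying structures, fed into lemma 7.2, is how the paper proves theorem 2.13 (the two localized structures have the same weak equivalences). But your sentence ``the $\mathfrak{S}$-local objects are the same for the projective and the Reedy structures'' is false as stated: locality includes fibrancy in the respective underlying structure, and a projectively fibrant object need not be Reedy fibrant, so the two classes of local objects genuinely differ --- only the homotopy-function-complex condition is model-independent. This exposes the real gap: when you invoke ``the computation already performed in theorem 2.3'' to identify that condition with the Segal condition, that computation was carried out for ${\bf X}$ Reedy fibrant, whereas here ${\bf X}$ is only projectively fibrant. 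To bridge it along your route you would have to pass to a Reedy fibrant replacement ${\bf X}\rightarrow \hat{{\bf X}}$ and check that both conditions are invariant under this objectwise weak equivalence; for the Segal condition this requires that finite products of weak equivalences between fibrant objects of {\bf M} are weak equivalences, which is exactly where the objectwise fibrancy of ${\bf X}$ enters. None of this is in your write-up. The simplest repair is to drop the detour: the direct argument goes through.
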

\begin{proof}
The proof is the same as for theorem 2.3, using proposition 2.11
instead of proposition 2.1.
\end{proof}
\begin{theorem}
{\rm (compare with \cite[12.3.2]{Si})}
The weak equivalences of ${\bf M}^{\mathcal{C}^{op}}_{\ast,p,S}$
and ${\bf M}^{\mathcal{C}^{op}}_{\ast,S}$ are the same.
\end{theorem}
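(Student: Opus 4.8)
The plan is to regard both structures as left Bousfield localizations, at the \emph{same} set $\mathfrak{S}$, of two model structures on the common category ${\bf M}^{\mathcal{C}^{op}}_{\ast}$ that already share their weak equivalences. By proposition 2.1 a map $f$ is a weak equivalence of ${\bf M}^{\mathcal{C}^{op}}_{\ast}$ precisely when $Kf$ is a Reedy, hence objectwise, weak equivalence of ${\bf M}^{\mathcal{C}^{op}}$; by proposition 2.11 a map $f$ is a weak equivalence of ${\bf M}^{\mathcal{C}^{op}}_{\ast,p}$ precisely when $Kf$ is a projective, hence again objectwise, weak equivalence. So the Reedy and projective structures on ${\bf M}^{\mathcal{C}^{op}}_{\ast}$ have one and the same class of weak equivalences, which is also why the identity pair recorded after proposition 2.11 is a Quillen pair. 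Since the weak equivalences of a left Bousfield localization are exactly the $\mathfrak{S}$-local equivalences, it suffices to prove that the $\mathfrak{S}$-local equivalences of the two structures agree.

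The tool I would use is that the homotopy function complexes of \cite[Chapter 17]{Hi}, which I shall write $\mathrm{map}(A,B)$, depend up to natural weak equivalence only on the category and its class of weak equivalences, not on the remaining model data. Granting this, both the notion of $\mathfrak{S}$-local equivalence and the homotopy-theoretic half of the notion of $\mathfrak{S}$-local object---that $\mathrm{map}(B,W)\to \mathrm{map}(A,W)$ be a weak equivalence for every $(A\to B)\in \mathfrak{S}$---are computed by the same simplicial sets in the two structures. The sole possible discrepancy lies in the fibrancy clause in the definition of $\mathfrak{S}$-local object.

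I would then compare the local objects. As the cofibrations of ${\bf M}^{\mathcal{C}^{op}}_{\ast,p}$ are contained in those of ${\bf M}^{\mathcal{C}^{op}}_{\ast}$, every Reedy fibration is a projective fibration, so every Reedy fibrant object is projective fibrant; hence every $\mathfrak{S}$-local object for ${\bf M}^{\mathcal{C}^{op}}_{\ast,S}$ is one for ${\bf M}^{\mathcal{C}^{op}}_{\ast,p,S}$. Testing against a larger class of objects is a stronger requirement, so every projective $\mathfrak{S}$-local equivalence is a Reedy one. For the reverse inclusion, given a projective $\mathfrak{S}$-local object $W$ I would choose a Reedy fibrant replacement $W\xrightarrow{\ \sim\ } W'$; since the homotopy-theoretic local condition is invariant under weak equivalence, $W'$ is $\mathfrak{S}$-local and, being Reedy fibrant, it is a local object of ${\bf M}^{\mathcal{C}^{op}}_{\ast,S}$. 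If $f\colon A\to B$ is a Reedy $\mathfrak{S}$-local equivalence then $\mathrm{map}(B,W')\to \mathrm{map}(A,W')$ is a weak equivalence, and because $W\to W'$ is a weak equivalence between objects that are both projective fibrant, applying $\mathrm{map}(A,-)$ and $\mathrm{map}(B,-)$ to it yields weak equivalences. Two-out-of-three then forces $\mathrm{map}(B,W)\to \mathrm{map}(A,W)$ to be a weak equivalence, so $f$ is a projective $\mathfrak{S}$-local equivalence.

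The main obstacle is precisely this mismatch between the fibrant objects of the two structures; the fibrant-replacement step above, together with the homotopy invariance of the function complexes, is what removes it. Everything else is formal once one knows that the machinery of \cite[Chapter 3]{Hi} sees only the weak equivalences and the localizing set, both of which are common to the two structures.
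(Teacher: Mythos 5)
Your proposal is correct and takes essentially the paper's route: the paper proves this theorem by invoking its appendix Lemma 7.2 (Dugger's observation that left Bousfield localizations, at the same set, of two model structures with identical weak equivalences linked by an identity Quillen pair have the same weak equivalences), together with the prior observations that ${\bf M}^{\mathcal{C}^{op}}_{\ast}$ and ${\bf M}^{\mathcal{C}^{op}}_{\ast,p}$ share their (objectwise) weak equivalences and that the identity pair is a Quillen pair. Your inline argument---comparing $\mathfrak{S}$-local objects via the inclusion of Reedy fibrations into projective fibrations, then using Reedy fibrant replacement and the invariance of homotopy function complexes---is in substance a reproof of that lemma, so the two approaches coincide.
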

\begin{proof}
Apply lemma 7.2 and the previous considerations.
\end{proof}

\section{Segal {\bf M}-categories and {\bf M}-categories 
(with fixed set of objects)}

Let {\bf M} be a cocomplete cartesian closed category.
Let $S$ be a set. We denote by {\bf M}\text{-}{\bf Cat}$(S)$ 
the category of small {\bf M}-categories with fixed set of objects $S$. 
Recall \cite{Si},\cite{Lu} that there is a functor 
$N:{\bf M}\text{-}{\bf Cat}(S) \rightarrow {\bf M}^{\Delta^{op}S}_{\ast}$ 
constructed as $$N\mathcal{A}(([n],s_{0},...,s_{n}))=
\begin{cases}
\ast, & \text{if }  n=0\\
\underline{\mathcal{A}}(s_{0},...,s_{n}), & \text{otherwise}
\end{cases} $$ 
where $\underline{\mathcal{A}}(s_{0},...,s_{n})=
\mathcal{A}(s_{0},s_{1})\times...\times \mathcal{A}(s_{n-1},s_{n})$.
For example, if $A$ is a monoid in {\bf M}, then $NA$ is the simplicial 
bar construction of the (trivially) augmented monoid $A$. 
$N$ is full and faithful. ${\bf X}\in {\bf M}^{\Delta^{op}S}_{\ast}$
is in the essential image of $N$ if and only if 
for every object $([n],s_{0},...,s_{n})$ of $\Delta S$, the canonical map
$${\bf X}(([n],s_{0},...,s_{n}))\rightarrow {\bf X}(([1],s_{0},s_{1}))
\times...\times {\bf X}(([1],s_{n-1},s_{n}))$$
is an isomorphism. $N$ has a left adjoint $L$ constructed
explicitly in \cite[2.2]{Lu}: to every pair $x,y$ of elements
of $S$ a certain category $\mathcal{J}_{x,y}(S)$ is associated,
and $L{\bf X}(x,y)$ is the colimit of a certain functor
$H^{{\bf X}}_{x,y}:\mathcal{J}_{x,y}(S)\rightarrow {\bf M}$
associated to {\bf X}.

For the next result, we regard {\bf M}\text{-}{\bf Cat}$(S)$
as having the standard \cite{SS2} model structure.
\begin{theorem}
{\rm (J. Lurie)} Let {\bf M} be a left proper, combinatorial 
cartesian model category with cofibrant unit, having a set of 
generating cofibrations with cofibrant domains and
satisfying the monoid axiom of \cite{SS1}.
Let $S$ be a set. Then the adjoint pair
$$L:{\bf M}^{\Delta^{op}S}_{\ast,p,S}
\rightleftarrows {\bf M}\text{-}{\bf Cat}(S):N$$ is a Quillen 
equivalence.
\end{theorem}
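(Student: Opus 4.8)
The plan is to verify the Quillen equivalence through the standard recognition principle (see \cite{Hi}): that $(L,N)$ is a Quillen pair, that $N$ reflects weak equivalences between fibrant objects, and that the derived unit is a weak equivalence on every cofibrant object. I would begin with the Quillen pair. Before localizing, the adjunction $L\colon{\bf M}^{\Delta^{op}S}_{\ast,p}\rightleftarrows{\bf M}\text{-}{\bf Cat}(S)\colon N$ is a Quillen pair: in both categories the fibrations and trivial fibrations are detected on the underlying hom-objects, and since $N$ sends $\mathcal{A}$ to the object whose value at $([n],s_{0},\dots,s_{n})$ is $\mathcal{A}(s_{0},s_{1})\times\cdots\times\mathcal{A}(s_{n-1},s_{n})$, it carries objectwise (trivial) fibrations to objectwise (trivial) fibrations. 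To descend this to the Segal localization on the left I would invoke the localization criterion for Quillen pairs \cite[3.3.18]{Hi}, which reduces the claim to showing that $N$ takes a fibrant object $\mathcal{A}$ of ${\bf M}\text{-}{\bf Cat}(S)$ to an $\mathfrak{S}$-local object. This is immediate from theorem 2.12: each $\mathcal{A}(s,s')$ is fibrant in ${\bf M}$, so $N\mathcal{A}$ is fibrant in ${\bf M}^{\Delta^{op}S}_{\ast,p}$, and since $\mathcal{A}$ lies in the essential image of $N$ the Segal maps of $N\mathcal{A}$ are isomorphisms, in particular weak equivalences.

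Next, $N$ reflects weak equivalences between fibrant objects. A weak equivalence between fibrant objects of the localized structure ${\bf M}^{\Delta^{op}S}_{\ast,p,S}$ is precisely an objectwise weak equivalence \cite{Hi}. Evaluating $Nf$ at the objects $([1],s,s')$ recovers $f(s,s')$, so if $Nf$ is an objectwise weak equivalence then each $f(s,s')$ is a weak equivalence of ${\bf M}$, whence $f$ is a weak equivalence in ${\bf M}\text{-}{\bf Cat}(S)$. (The full faithfulness of $N$, i.e. the isomorphism $LN\to\mathrm{Id}$, could be used here as an alternative input.)

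The principal task is the derived unit: for cofibrant ${\bf X}$ in ${\bf M}^{\Delta^{op}S}_{\ast,p,S}$ I must show that the composite $\[{\bf X}\longrightarrow NL{\bf X}\longrightarrow N\bigl((L{\bf X})^{f}\bigr)\]$ is a weak equivalence, where $(-)^{f}$ denotes fibrant replacement in ${\bf M}\text{-}{\bf Cat}(S)$. I would argue objectwise on hom-objects. By proposition 2.11 a cofibrant ${\bf X}$ is objectwise cofibrant, and the hypothesis that ${\bf M}$ has generating cofibrations with cofibrant domains lets one arrange that the diagram $H^{{\bf X}}_{x,y}\colon\mathcal{J}_{x,y}(S)\to{\bf M}$ of \cite[2.2]{Lu} is projectively cofibrant, so that the colimit computing $L{\bf X}(x,y)$ coincides with its homotopy colimit. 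Using the cartesian structure, the cofibrancy of the unit and the monoid axiom of \cite{SS1}, one then controls the effect of fibrant replacement in ${\bf M}\text{-}{\bf Cat}(S)$ on the hom-objects, so that $(L{\bf X})^{f}(x,y)$ retains the homotopy type of this homotopy colimit. Finally the Segal condition of theorem 2.12 on a fibrant replacement ${\bf X}^{f}$ of ${\bf X}$ in ${\bf M}^{\Delta^{op}S}_{\ast,p,S}$ identifies that homotopy type with the derived value of ${\bf X}$ at $([1],x,y)$, which is what makes the displayed composite a weak equivalence.

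I expect this last step to be the main obstacle. The difficulty is twofold: first, the strict colimit defining $L$ is homotopically meaningful only for cofibrant ${\bf X}$, which is exactly why the cofibrant-domains hypothesis is needed; and second, fibrant and cofibrant replacement in ${\bf M}\text{-}{\bf Cat}(S)$ proceed by transfinite pushouts along free maps, so that checking these do not alter the homotopy type of the hom-objects is where the monoid axiom of \cite{SS1} is indispensable. These homotopical inputs on the base category, rather than any categorical subtlety, are the place where the weakening of the hypotheses alluded to in the statement must be handled with care.
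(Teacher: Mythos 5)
Your skeleton --- Quillen pair, $N$ reflects weak equivalences between fibrant objects, derived unit, reduction to the single map ${\bf X}(([1],x,y))\rightarrow L{\bf X}(x,y)$ on hom-objects --- agrees with the paper's, and your first two steps are correct. The gap is in the derived-unit step, and it is precisely the point of the theorem. You propose to run the homotopy-colimit argument (cofibrancy of $H^{{\bf X}}_{x,y}$, identification of the colimit computing $L{\bf X}(x,y)$ with a homotopy colimit, then a cofinality comparison that the Segal condition turns into a weak equivalence) directly in ${\bf M}$. But the results this requires --- the paper's proposition 3.2, which is Lurie's Lemma 2.2.15, together with sublemmas 3.3 and 3.4 --- are stated and proved for \emph{simplicial} model categories: in particular the cofinality statement (sublemma 3.4), which is exactly your step ``the Segal condition identifies that homotopy type with the derived value of ${\bf X}$ at $([1],x,y)$'', is a homotopy-invariance property of the simplicial homotopy colimit of \cite[18.1.2]{Hi}. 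The ${\bf M}$ of the theorem is not assumed simplicial, and not all of its objects are assumed cofibrant (that was Lurie's hypothesis, which the theorem is meant to remove), so ``arguing objectwise on hom-objects'' inside ${\bf M}$ does not get off the ground as written.

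The missing idea is the paper's use of Dugger's replacement theorem (proposition 7.1): the hypotheses ``cofibrant unit'' and ``set of generating cofibrations with cofibrant domains'' are there \emph{precisely} so that ${\bf M}$ is Quillen equivalent, via the strong monoidal adjunction $cst:{\bf M}\rightleftarrows L_{\mathfrak{S}}{\bf M}^{\Delta^{op}}:ev_{0}$, to a monoidal \emph{simplicial} model category. Since $cst$ is a left Quillen equivalence, it reflects weak equivalences between cofibrant objects, so the question whether ${\bf X}(([1],x,y))\rightarrow L{\bf X}(x,y)$ is a weak equivalence transports to $(L_{\mathfrak{S}}{\bf M}^{\Delta^{op}})^{\Delta^{op}S}_{\ast}$, where $Lcst'\cong cstL$, and the isomorphism of 1.2$(d)$ converts the fibrancy of ${\bf X}$ in ${\bf M}^{\Delta^{op}S}_{\ast,p,S}$ into the Segal hypothesis of proposition 3.2 for ${\bf Z}=cst'{\bf X}$; proposition 3.2 then applies because $L_{\mathfrak{S}}{\bf M}^{\Delta^{op}}$ is simplicial. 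Note also that you misassign the roles of the hypotheses: the cofibrancy of $H^{{\bf X}}_{x,y}$ is Lurie's Proposition 2.2.6 and needs only that ${\bf X}$ be cofibrant in ${\bf M}^{\Delta^{op}S}_{\ast,p}$, not the cofibrant-domains assumption; and the monoid axiom of \cite{SS1} is what makes the standard model structure on ${\bf M}\text{-}{\bf Cat}(S)$ exist at all, rather than a device for controlling fibrant replacement --- for the latter the paper needs only that $L{\bf X}$ is cofibrant, so that $NL{\bf X}\rightarrow N\hat{F}L{\bf X}$ is a weak equivalence in ${\bf M}^{\Delta^{op}S}_{\ast,p}$.
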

The above theorem was proved by J. Bergner \cite{Be1} 
in the case {\bf M}={\bf S}, using algebraic theories.
It was also proved in \cite[Theorem 2.2.16]{Lu}, 
not quite in this form, using a different method and 
under the assumption that all objects of {\bf M} are cofibrant. 
However, a close analysis of the proof of \cite[Theorem 2.2.16]{Lu}
reveals that this assumption is superfluous. To make
things clear we give below Lurie's proof, stripped to the
essentials and with some changes. At the heart of it
is the following technical result.
\begin{proposition}
{\rm \cite[Lemma 2.2.15]{Lu}} 
Let {\bf M} be a left proper, combinatorial, cartesian simplicial model category.
Let $S$ be a set. Let ${\bf X}\in {\bf M}^{\Delta^{op}S}_{\ast}$
be cofibrant in ${\bf M}^{\Delta^{op}S}_{\ast, p}$ and
such that for every object $([n],s_{0},...,s_{n})$ of $\Delta S$, 
the canonical map $${\bf X}(([n],s_{0},...,s_{n}))\rightarrow 
{\bf X}(([1],s_{0},s_{1}))\times...\times {\bf X}(([1],s_{n-1},s_{n}))$$
is a weak equivalence. Then for all pairs $x,y$ of 
elements of $S$, the canonical map ${\bf X}(([1],x,y))
\rightarrow NL{\bf X}(([1],x,y))$ is a weak equivalence of {\bf M}.
\end{proposition}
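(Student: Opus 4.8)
The plan is to unwind the target and then collapse a colimit. Since $N\mathcal{A}(([1],s_{0},s_{1}))=\mathcal{A}(s_{0},s_{1})$ for any {\bf M}-category $\mathcal{A}$, we have $NL{\bf X}(([1],x,y))=L{\bf X}(x,y)$, and by the description recalled just before the statement this equals $\mathrm{colim}_{\mathcal{J}_{x,y}(S)}H^{\bf X}_{x,y}$, where $H^{\bf X}_{x,y}$ is assembled from the simplices ${\bf X}(([n],s_{0},\dots,s_{n}))$ indexed by words $x=s_{0},\dots,s_{n}=y$ (see \cite[2.2]{Lu}). Under this identification the canonical map is the coprojection into the colimit coming from the length-one word $(x,y)$, whose value is ${\bf X}(([1],x,y))$. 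So I must show that this coprojection is a weak equivalence of {\bf M}. The strategy is to realise the colimit as a homotopy colimit and to collapse it onto its length-one contribution by a skeletal filtration of $\mathcal{J}_{x,y}(S)$, the Segal hypothesis being exactly what forces each stage of the filtration to be a weak equivalence.

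First I would record the cofibrancy input. Because {\bf X} is cofibrant in ${\bf M}^{\Delta^{op}S}_{\ast,p}$, it is objectwise cofibrant (the observation following Proposition 2.11); hence every value of $H^{\bf X}_{x,y}$ is cofibrant in {\bf M}, and the latching maps of {\bf X} are cofibrations. Since {\bf M} is simplicial and combinatorial, this makes $H^{\bf X}_{x,y}$ a cofibrant diagram on $\mathcal{J}_{x,y}(S)$, so that its strict colimit computes its homotopy colimit and, more to the point, any skeletal filtration of the colimit proceeds along cofibrations.

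Next I would filter $\mathcal{J}_{x,y}(S)$ by the length of the longest nondegenerate word, obtaining a sequence $\mathcal{J}^{(0)}\subset\mathcal{J}^{(1)}\subset\cdots$ whose union is $\mathcal{J}_{x,y}(S)$ and whose bottom stage computes ${\bf X}(([1],x,y))$. Writing $C_{k}=\mathrm{colim}_{\mathcal{J}^{(k)}}H^{\bf X}_{x,y}$, it suffices to prove that each inclusion $C_{k-1}\to C_{k}$ is a trivial cofibration: the coprojection in question is then the transfinite composite of these maps, and trivial cofibrations are closed under transfinite composition. Each $C_{k}$ is a pushout of $C_{k-1}$ along an attaching map built from the boundaries of the newly added words; objectwise cofibrancy together with the cartesian (pushout-product) structure makes this attaching map a cofibration, while the Segal hypothesis — which says precisely that ${\bf X}(([n],s_{0},\dots,s_{n}))\to{\bf X}(([1],s_{0},s_{1}))\times\cdots\times{\bf X}(([1],s_{n-1},s_{n}))$ is a weak equivalence — makes it a weak equivalence as well. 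Left properness then guarantees that the pushout remains a weak equivalence.

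The main obstacle is this inductive step, that is, organising the latching and boundary data inside $\mathcal{J}_{x,y}(S)$ so that the attaching maps are visibly trivial cofibrations; this is where the Segal condition and the cartesian, simplicial and left proper structure of {\bf M} all enter, and it follows \cite[2.2]{Lu} closely. Granting it, the coprojection ${\bf X}(([1],x,y))\to L{\bf X}(x,y)=NL{\bf X}(([1],x,y))$ is a weak equivalence, as required.
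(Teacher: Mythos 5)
Your reduction of the statement to showing that the coprojection ${\bf X}(([1],x,y))\rightarrow \mathrm{colim}_{\mathcal{J}_{x,y}(S)}H^{{\bf X}}_{x,y}$ is a weak equivalence is correct and agrees with the paper's setup, but both load-bearing steps after that have genuine problems. First, your cofibrancy argument is wrong: objectwise cofibrancy of $H^{{\bf X}}_{x,y}$ (plus {\bf M} being simplicial and combinatorial) does \emph{not} make $H^{{\bf X}}_{x,y}$ a cofibrant diagram; projective cofibrancy of a diagram is a much stronger condition than objectwise cofibrancy, and no general principle upgrades one to the other. The paper gets cofibrancy of $H^{{\bf X}}_{x,y}$ from \cite[Proposition 2.2.6]{Lu}, a specific argument which uses the full strength of the hypothesis that {\bf X} is cofibrant in ${\bf M}^{\Delta^{op}S}_{\ast,p}$; only then does Sublemma 3.3 identify the colimit with the homotopy colimit.

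Second, and more seriously, your inductive step is the entire content of the proposition and you leave it as a black box (``Granting it\dots''), while asserting that it follows \cite[2.2]{Lu} closely. It does not: Lurie's proof, which the paper reproduces, is not a skeletal-filtration argument at all. It compares the homotopy colimit over $\mathcal{J}_{x,y}(S)$ with the homotopy colimit over the subcategory $\mathcal{J}_{x,y}'(S)$, which has a terminal object, using a retraction $R:\mathcal{J}_{x,y}(S)\rightarrow\mathcal{J}_{x,y}'(S)$ and a natural transformation $\alpha:iR\Rightarrow Id$; the Segal hypothesis enters only to make each $H^{{\bf X}}_{x,y}(\alpha_{\sigma})$ a weak equivalence, and a homotopy invariance statement for homotopy colimits (the paper's Sublemma 3.4, applied with $G=i$, $F=R$, $Ri=Id$) concludes. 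Your filtration by word length faces a real obstruction: in $\mathcal{J}_{x,y}(S)$ there are morphisms from longer words to shorter ones, so enlarging the stage of the filtration does not simply glue new cells along their boundaries --- it can impose new identifications on the previous stage --- and there is no reason the maps $C_{k-1}\rightarrow C_{k}$ are cofibrations, let alone trivial ones; making this precise is exactly the difficulty the retraction argument is designed to avoid. (Your appeal to left properness is also misplaced: if the attaching map were a trivial cofibration, its pushout would automatically be a trivial cofibration with no properness hypothesis; left properness concerns pushouts of weak equivalences along cofibrations, which is not the configuration you describe.) As it stands, the proposal establishes nothing beyond the initial reduction.
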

\begin{proof}
We use the notations of the proof of \emph{loc. cit.}.
Thus, we have a commutative diagram
\[
   \xymatrix{
{\bf X}(([1],x,y)) \ar[r]  & NL{\bf X}(([1],x,y))\\
colim_{\mathcal{J}_{x,y}'(S)}H \ar[r] \ar[u]^{\cong} & 
colim_{\mathcal{J}_{x,y}(S)}H^{{\bf X}}_{x,y} \ar[u]_{\cong}\\
hocolim_{\mathcal{J}_{x,y}'(S)}H \ar[r] \ar[u] & 
hocolim_{\mathcal{J}_{x,y}(S)}H^{{\bf X}}_{x,y} \ar[u]\\
}
  \]
The right vertical map is a weak equivalence by sublemma 3.3
since $H^{{\bf X}}_{x,y}$ is cofibrant \cite[Proposition 2.2.6]{Lu}.
The left vertical map is a weak equivalence since 
$\mathcal{J}_{x,y}'(S)$ has terminal object. We prove
that the bottom horizontal map is a weak equivalence.
Let $i$ be the inclusion $\mathcal{J}_{x,y}'(S)\subset
\mathcal{J}_{x,y}(S)$. There are a functor $R:
\mathcal{J}_{x,y}(S)\rightarrow \mathcal{J}_{x,y}'(S)$
and a natural transformation $\alpha:iR\Rightarrow Id$.
>From the second assumption on {\bf X} it follows that
for every object $\sigma$ of $\mathcal{J}_{x,y}(S)$,
the map $H^{{\bf X}}_{x,y}(\alpha_{\sigma})$ 
is a weak equivalence. We are then in the situation 
of sublemma 3.4 with $G=i$, $F=R$ and $Ri=Id$.
\end{proof}
For the next two (standard) results, $hocolim_{I}{\bf X}$
stands for the homotopy colimit of {\bf X}, as in \cite[18.1.2]{Hi}.
\begin{sublem}
Let {\bf M} be a cofibrantly generated simplicial model category
and $I$ a small category. Then for every
cofibrant object ${\bf X}\in {\bf M}^{I}$, the natural map
$$hocolim_{I}{\bf X}\rightarrow colim_{I}{\bf X}$$
is a weak equivalence.
\end{sublem}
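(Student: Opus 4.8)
The plan is to work with the projective model structure on $\mathbf{M}^{I}$, which is the structure for which ``cofibrant'' should be read, and to exhibit the natural map as a comparison between two left Quillen functors. First I would record two structural facts. The functor $\mathrm{colim}_{I}\colon\mathbf{M}^{I}\to\mathbf{M}$ is left adjoint to the constant diagram functor, which preserves the (objectwise) fibrations and trivial fibrations of the projective structure; hence $\mathrm{colim}_{I}$ is left Quillen. The homotopy colimit of \cite[18.1.2]{Hi} is the realization of the bar construction $B_{\bullet}(\mathbf{X},I,\ast)$ and may be written as the coend $\mathrm{hocolim}_{I}\mathbf{X}=\int^{i}\mathbf{X}(i)\otimes N(i\downarrow I)$, which is cocontinuous in $\mathbf{X}$. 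Applying the pushout-product axiom to a generating projective (trivial) cofibration $F_{i}(A)\to F_{i}(B)$ and the cofibration $\emptyset\to N(i\downarrow I)$ shows that $\mathrm{hocolim}_{I}$ carries it to $(A\to B)\otimes N(i\downarrow I)$, again a (trivial) cofibration; so $\mathrm{hocolim}_{I}$ is left Quillen as well. The map in the statement is the natural transformation $\eta\colon\mathrm{hocolim}_{I}\Rightarrow\mathrm{colim}_{I}$ induced on coends by the projection $N(i\downarrow I)\to\ast$.

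The core of the proof is the base case of the free diagrams $F_{i}(A)$. Here $\mathrm{colim}_{I}F_{i}(A)\cong A$ and, by the co-Yoneda computation of the coend above, $\mathrm{hocolim}_{I}F_{i}(A)\cong A\otimes N(i\downarrow I)$, with $\eta$ equal to $A\otimes(N(i\downarrow I)\to\ast)$. The under-category $i\downarrow I$ has the initial object $\mathrm{id}_{i}$, so the projection $N(i\downarrow I)\to\ast$ is a simplicial deformation retraction, hence a simplicial homotopy equivalence. Because $A\otimes(-)$ is a simplicial functor it preserves simplicial homotopies, so $\eta_{F_{i}(A)}$ is a simplicial homotopy equivalence in $\mathbf{M}$ for \emph{every} object $A$, and therefore a weak equivalence by \cite[9.5.16]{Hi}. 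I want to stress that this step uses no cofibrancy assumption on $A$, which is exactly what permits the generating cofibrations of $\mathbf{M}$ to have arbitrary domains.

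To pass to a general cofibrant $\mathbf{X}$ I would argue by cellular induction: $\mathbf{X}$ is a retract of a transfinite composite of pushouts of maps $F_{i}(A)\to F_{i}(B)$ with $A\to B$ a generating cofibration of $\mathbf{M}$, and both $\mathrm{colim}_{I}$ and $\mathrm{hocolim}_{I}$ preserve these colimits. Since weak equivalences are closed under retracts and, along cofibrations, under transfinite composition, everything reduces to a single pushout step, in which one compares the two pushout squares obtained by applying the functors, knowing already that $\eta$ is a weak equivalence at the three input corners. The main obstacle is precisely this gluing step: the usual gluing lemma asks the corner objects to be cofibrant, whereas $\mathrm{colim}_{I}F_{i}(A)\cong A$ need not be. Under the hypotheses in force later in the section (left properness, or generating cofibrations with cofibrant domains) the gluing lemma of \cite{Hi} applies directly and closes the induction. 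In the generality stated one instead organizes $B_{\bullet}(\mathbf{X},I,\ast)$ as a Reedy cofibrant simplicial object and compares its realization with the constant augmentation at $\mathrm{colim}_{I}\mathbf{X}$ degreewise; this is the argument of \cite[18.5.3]{Hi}, which I would ultimately invoke to finish.
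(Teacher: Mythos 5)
You should first be aware that the paper itself offers no proof of this sublemma: it is one of the two results flagged as ``(standard)'', the implicit reference being Hirschhorn's theorem that for a diagram cofibrant in the projective structure the natural map from the homotopy colimit to the colimit is a weak equivalence (\cite[Theorem 19.9.1]{Hi}), so your proposal stands or falls on its own. Its first two thirds are correct and well argued: both $colim_{I}$ and $hocolim_{I}=\int^{i}(-)(i)\otimes N(i\downarrow I)^{op}$ are left Quillen for the projective structure, and on a free diagram $F_{i}(A)$ the comparison map is $A\otimes\bigl(N(i\downarrow I)^{op}\rightarrow\ast\bigr)$, a simplicial homotopy equivalence for \emph{arbitrary} $A$; your observation that this step needs no cofibrancy of $A$ is exactly the right one. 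The obstruction you then run into is also real: in the cellular induction the corners $A$, $B$, $A\otimes N(i\downarrow I)^{op}$, $B\otimes N(i\downarrow I)^{op}$ need not be cofibrant, so the gluing lemma is unavailable, and your patched versions (under left properness, or generating cofibrations with cofibrant domains) are correct and would in fact suffice for the only place the sublemma is used in the paper, namely proposition 3.2 and theorem 3.1, whose hypotheses include both. But the sublemma as stated assumes neither, and that is where your proof has a genuine gap.

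The fallback you offer for the stated generality does not close that gap. The sketch --- organize $B_{\bullet}({\bf X},I,\ast)$ as a Reedy cofibrant simplicial object and compare its realization with the constant augmentation at $colim_{I}{\bf X}$ ``degreewise'' --- cannot be made into a proof: $B_{n}({\bf X},I,\ast)=\coprod_{i_{0}\rightarrow\cdots\rightarrow i_{n}}{\bf X}(i_{0})$ is not degreewise weakly equivalent to $colim_{I}{\bf X}$, and Reedy cofibrancy relates the realization to the \emph{homotopy} colimit, never to the colimit. Moreover \cite[18.5.3]{Hi} is the homotopy invariance of $hocolim$ under objectwise weak equivalences of objectwise cofibrant diagrams; it compares two homotopy colimits, not a homotopy colimit with a colimit. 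The standard repair keeps ${\bf X}$ fixed and runs the induction in the simplicial-set variable, where every object is cofibrant: checking on the generators $F_{i}(A)\rightarrow F_{i}(B)$ (only the cofibration $A\rightarrow B$ is used, never cofibrancy of $A$) shows that the coend bifunctor $-\otimes_{I}-\colon{\bf M}^{I}\times{\bf S}^{I^{op}}\rightarrow{\bf M}$ satisfies the pushout--product axiom for the projective cofibrations in the first variable and the monomorphisms in the second; hence for projective-cofibrant ${\bf X}$ the functor ${\bf X}\otimes_{I}-$ is left Quillen for the injective model structure on ${\bf S}^{I^{op}}$, in which every object is cofibrant, and Ken Brown's lemma applied to the objectwise weak equivalence $N(-\downarrow I)^{op}\rightarrow\ast$ finishes the proof, since $hocolim_{I}{\bf X}={\bf X}\otimes_{I}N(-\downarrow I)^{op}$ and $colim_{I}{\bf X}={\bf X}\otimes_{I}\ast$. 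This is in essence Hirschhorn's argument, and it is the missing idea in your induction.
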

\begin{sublem}
Let {\bf M} be a simplicial model category $I$ and $J$ 
two small categories. Suppose that are functors 
$G:I\rightarrow J$, and $F:J\rightarrow I$ together 
with natural transformations $\alpha:GF\Rightarrow Id_{J}$
and $\beta:FG\Rightarrow Id_{I}$. Let ${\bf X}:J\rightarrow {\bf M}$
take cofibrant values and be such that

$(a)$ ${\bf X}(\alpha_{j}):{\bf X}GF(j)\rightarrow {\bf X}(j)$ 
is a weak equivalence for all $j\in J$, and

$(b)$ ${\bf X}(G(\beta_{i})):{\bf X}GFG(i)\rightarrow {\bf X}G(i)$ 
is a weak equivalence for all $i\in I$.

Then the map $$hocolim_{I}{\bf X}G\rightarrow 
hocolim_{J}{\bf X}$$ is a weak equivalence.
\end{sublem}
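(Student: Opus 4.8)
The plan is to deduce the statement from two standard properties of the homotopy colimit in a simplicial model category, both available in \cite{Hi}. The first is homotopy invariance: an objectwise weak equivalence between diagrams taking cofibrant values induces a weak equivalence on homotopy colimits \cite[18.5.3]{Hi}. The second is that $hocolim$ is functorial both in the diagram and in the indexing category, and that these two functorialities are compatible through natural transformations. Concretely, for a functor $u:A\rightarrow B$ and a diagram $D:B\rightarrow {\bf M}$ one has an induced map $u_{\ast}:hocolim_{A}Du\rightarrow hocolim_{B}D$, with $(vu)_{\ast}=v_{\ast}u_{\ast}$ for composable functors; moreover, if $\tau:u\Rightarrow Id_{B}$ is a natural transformation of endofunctors of $B$, then $u_{\ast}$ agrees in $\mathrm{Ho}({\bf M})$ with the map $(D\tau)_{\ast}:hocolim_{B}Du\rightarrow hocolim_{B}D$ induced by the natural transformation $D\tau$.

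First I would introduce three induced maps. Let $g:hocolim_{I}{\bf X}G\rightarrow hocolim_{J}{\bf X}$ be induced by $G$ (this is the map in the statement); let $f:hocolim_{J}{\bf X}GF\rightarrow hocolim_{I}{\bf X}G$ be induced by $F$ applied to the diagram ${\bf X}G$ on $I$; and let $g':hocolim_{I}{\bf X}GFG\rightarrow hocolim_{J}{\bf X}GF$ be induced by $G$ applied to the diagram ${\bf X}GF$ on $J$. Factoring the endofunctors $GF:J\rightarrow J$ and $FG:I\rightarrow I$ through their two components and using functoriality of $hocolim$ in the indexing category, one obtains the identities $g\circ f=(GF)_{\ast}$ and $f\circ g'=(FG)_{\ast}$.

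Next I would identify these two composites in $\mathrm{Ho}({\bf M})$. Since ${\bf X}$ takes cofibrant values, so do the diagrams ${\bf X}$, ${\bf X}GF$, ${\bf X}G$ and ${\bf X}GFG$, as each is ${\bf X}$ evaluated at suitable objects of $J$. By hypothesis $(a)$ the transformation ${\bf X}\alpha:{\bf X}GF\Rightarrow {\bf X}$ is an objectwise weak equivalence of such diagrams over $J$, so $({\bf X}\alpha)_{\ast}$ is a weak equivalence; by the compatibility principle applied to $\alpha:GF\Rightarrow Id_{J}$, it equals $(GF)_{\ast}=g\circ f$ in $\mathrm{Ho}({\bf M})$, whence $g\circ f$ is a weak equivalence. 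Symmetrically, hypothesis $(b)$ says that ${\bf X}G\beta:{\bf X}GFG\Rightarrow {\bf X}G$ is an objectwise weak equivalence over $I$, so $({\bf X}G\beta)_{\ast}$ is a weak equivalence, equal to $(FG)_{\ast}=f\circ g'$ in $\mathrm{Ho}({\bf M})$; hence $f\circ g'$ is a weak equivalence.

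Finally I would conclude by a formal argument in $\mathrm{Ho}({\bf M})$: the fact that $g\circ f$ is an isomorphism exhibits $f$ as a split monomorphism, while $f\circ g'$ being an isomorphism exhibits $f$ as a split epimorphism; a map that is at once a split monomorphism and a split epimorphism is an isomorphism, so $f$ is a weak equivalence, and then $g=(g\circ f)\circ f^{-1}$ is a composite of isomorphisms in $\mathrm{Ho}({\bf M})$, hence a weak equivalence. The delicate point, and the step I expect to demand the most care, is the compatibility principle relating the map $(GF)_{\ast}$ induced by the \emph{endofunctor} $GF$ to the map $({\bf X}\alpha)_{\ast}$ induced by the \emph{natural transformation} ${\bf X}\alpha$: this rests on the fact that a natural transformation of index functors induces a (simplicial) homotopy between the corresponding maps on the bar-construction model of $hocolim$, and one must make sure throughout that the model of the homotopy colimit being used is genuinely homotopy invariant on objectwise cofibrant diagrams.
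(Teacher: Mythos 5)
Your argument is correct, but there is nothing in the paper to compare it against line by line: the paper states sublemmas 3.3 and 3.4 explicitly as ``(standard) results'' and gives no proof, so the relevant comparison is with the standard argument --- which is precisely what you have reconstructed. Your three maps $f$, $g$, $g'$, the strict identities $g\circ f=(GF)_{\ast}$ and $f\circ g'=(FG)_{\ast}$, and the formal conclusion (split monomorphism plus split epimorphism, equivalently the two-out-of-six property, together with the fact that a map of $\mathbf{M}$ is a weak equivalence if and only if it becomes invertible in $\mathrm{Ho}(\mathbf{M})$) are all sound; note also that you correctly read hypothesis $(b)$ as saying that $(\mathbf{X}G)\beta$ is an objectwise weak equivalence of diagrams on $I$, which is the point where $G$ must be applied to $\beta_{i}$ before evaluating $\mathbf{X}$. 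You are likewise right that the whole weight rests on the compatibility principle, and your proposed justification is the correct one; to make it airtight for the model of the homotopy colimit the paper uses, namely \cite[18.1.2]{Hi}, the needed ingredients are: (i) index-change maps for this bar-construction model are strictly functorial in the index functor; (ii) a natural transformation $\tau\colon u\Rightarrow Id_{B}$ induces, via the usual prism decomposition of the bar construction (equivalently, via the functor $B\times[1]\rightarrow B$ encoding $\tau$, where $[1]$ is the arrow category), a simplicial homotopy between $u_{\ast}$ and $(D\tau)_{\ast}$ as maps $hocolim_{B}Du\rightarrow hocolim_{B}D$; (iii) since $Du$ is objectwise cofibrant, $hocolim_{B}Du$ is cofibrant by \cite[18.5.2(1)]{Hi}, so tensoring with $\Delta[1]$ yields a genuine cylinder object and simplicially homotopic maps out of it agree in $\mathrm{Ho}(\mathbf{M})$; (iv) the homotopy invariance you invoke is \cite[18.5.3(2)]{Hi}, whose objectwise-cofibrancy hypothesis holds for $\mathbf{X}$, $\mathbf{X}GF$, $\mathbf{X}G$ and $\mathbf{X}GFG$ because their values are among the values of $\mathbf{X}$. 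Point (ii) is not stated explicitly in \cite{Hi}, so in a written-out version you should supply the prism homotopy; with that done, your proof is complete. Incidentally, in the single place where the paper applies the sublemma (the proof of proposition 3.2, with $G=i$, $F=R$ and $Ri=Id$), one has $FG=Id$, so hypothesis $(b)$ is automatic there; your argument covers the general statement as given.
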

We begin now the proof of theorem 3.1. It is clear
that $(L,N)$ is a Quillen pair and that 
$N$ preserves and reflects weak equivalences 
between fibrant objects. We prove that the total left 
derived functor of $L$ is full and faithful. This amounts to showing
that for every ${\bf X}\in {\bf M}^{\Delta^{op}S}_{\ast}$
which is cofibrant-fibrant in 
${\bf M}^{\Delta^{op}S}_{\ast, p,S}$, and for some
fibrant approximation $L{\bf X}\rightarrow \hat{F}L{\bf X}$
to $L${\bf X}, the map ${\bf X}\rightarrow NL{\bf X}\rightarrow 
N\hat{F}L{\bf X}$ is a weak equivalence in 
${\bf M}^{\Delta^{op}S}_{\ast, p,S}$.
We factor the map $L{\bf X}\rightarrow \ast$ as a trivial cofibration
$L{\bf X}\rightarrow \hat{F}L{\bf X}$ followed by a fibration
$\hat{F}L{\bf X}\rightarrow \ast$. We shall prove that  
${\bf X}\rightarrow NL{\bf X}\rightarrow 
N\hat{F}L{\bf X}$ is a weak equivalence in 
${\bf M}^{\Delta^{op}S}_{\ast, p}$.
Since $L{\bf X}$ is cofibrant, the map
$NL{\bf X}\rightarrow N\hat{F}L{\bf X}$ is a 
weak equivalence in ${\bf M}^{\Delta^{op}S}_{\ast, p}$.
Since {\bf X} is fibrant, to prove that 
${\bf X}\rightarrow NL{\bf X}$ is a weak equivalence
it suffices to prove that for every pair $x,y$ of elements of $S$, 
the map ${\bf X}(([1],x,y))\rightarrow L{\bf X}(x,y)$ is a weak 
equivalence.

Let $L_{\mathfrak{S}}{\bf M}^{\Delta^{op}}$ be the model category
considered in the proof of proposition 7.1. Since $cst:{\bf M}\rightarrow 
L_{\mathfrak{S}}{\bf M}^{\Delta^{op}}$ reflects weak 
equivalences between cofibrant objects, it suffices to show that 
$cst{\bf X}(([1],x,y))\rightarrow cstL{\bf X}(x,y)$ is a weak
equivalence. 

Consider the diagram
\[
   \xymatrix{
{\bf X}\in{\bf M}^{\Delta^{op}S}_{\ast} \ar @<2pt> [r]^{L} 
\ar @<2pt> [d]^{cst'} & {\bf M}\text{-}{\bf Cat}(S) 
\ar @<2pt> [l]^{N} \ar @<2pt> [d]^{cst}\\
(L_{\mathfrak{S}}{\bf M}^{\Delta^{op}})^{\Delta^{op}S}_{\ast}  
\ar @<2pt> [r]^{L} \ar @<2pt> [u]^{ev_{0}} &
L_{\mathfrak{S}}{\bf M}^{\Delta^{op}}\text{-}{\bf Cat}(S) 
\ar @<2pt> [l]^{N} \ar @<2pt> [u]^{ev_{0}}\\
  }
  \]
One has $ev_{0}N=Nev_{0}$, so $Lcst'\cong cstL$. 
We will prove that the object ${\bf Z}=cst'{\bf X}$ satisfies 
the assumptions of proposition 3.2. For this, it suffices 
to prove that for every object $([n],s_{0},...,s_{n})$ of $\Delta S$, 
the canonical map $${\bf Z}(([n],s_{0},...,s_{n}))\rightarrow 
{\bf Z}(([1],s_{0},s_{1}))\times...\times {\bf Z}(([1],s_{n-1},s_{n}))$$
is a weak equivalence in ${\bf M}^{\Delta^{op}}$. Under
the isomorphism $({\bf M}^{\Delta^{op}})^{\Delta^{op}S}_{\ast}
\cong ({\bf M}^{\Delta^{op}S}_{\ast})^{\Delta^{op}}$
(1.2$(d)$) {\bf Z} corresponds to $cst{\bf X}$, and then 
the fact that {\bf X} is fibrant implies that the required map
is a weak equivalence. Thus, by proposition 3.2 the map
${\bf Z}(([1],x,y))\rightarrow L{\bf Z}(x,y)$ is a weak equivalence.
But $L{\bf Z}(x,y)\cong cstL{\bf X}(x,y)$ and 
${\bf Z}(([1],x,y))\cong cst{\bf X}(([1],x,y))$. The proof
of theorem 3.1 is complete.

\section{The category of pre-{\bf M}-categories}

Recall from example 2.4 the category $\Delta S$.
Let {\bf M} be a category. A function $u:S\rightarrow T$ induces
a functor $u^{\ast}:{\bf M}^{\Delta^{op}T}\rightarrow
{\bf M}^{\Delta^{op}S}$, which has a left adjoint $u_{!}$
 provided that {\bf M} is cocomplete, and a right adjoint $u_{\ast}$
provided that {\bf M} is complete. 
\begin{definition}
We define a category ${\bf M}^{\Delta^{op}Set}$ as follows. 
The objects of ${\bf M}^{\Delta^{op}Set}$ are pairs 
$(S,{\bf X})$, where $S$ is a set and ${\bf X}\in {\bf M}^{\Delta^{op}S}$. 
An arrow $(S,{\bf X})\rightarrow (T,{\bf Y})$ is a pair $(u,f)$, 
where $u:S\rightarrow T$ is a function and 
$f:{\bf X}\Rightarrow u^{\ast}{\bf Y}$ is a natural tranformation.
\end{definition}
Suppose that {\bf M} is suitably complete and cocomplete.
The terminal object of ${\bf M}^{\Delta^{op}Set}$ is $(\ast,\ast)$.
The functor $Ob:{\bf M}^{\Delta^{op}Set}\rightarrow Set$ defined as
$Ob((S,{\bf X}))=S$ has a left adjoint $D$ given by $DS=(S,\emptyset)$, 
and a right adjoint $\iota$ given by $\iota S=(S,\ast)$. 
$D$ and $\iota$ are full and faithful. The functor $Ob$ is a 
cloven Grothendieck bifibration. An arrow 
$(u,f):(S,{\bf X})\rightarrow (T,{\bf Y})$ is cartesian if 
and only if $f$ is an isomorphism. The fibre category of 
$Ob$ over a set $S$ is ${\bf M}^{\Delta^{op}S}$.

Suppose that {\bf M} is a closed category. 
Write $Y^{X}$ for the internal hom 
of two objects $X$, $Y$ of {\bf M}. Then for every
set $S$, ${\bf M}^{\Delta^{op}S}$ is tensored and 
cotensored over {\bf M} (1.2$(c)$). For every function 
$u:S\rightarrow T$, ${\bf X}\in {\bf M}^{\Delta^{op}T}$ and
$A\in {\bf M}$ we have the formula 
$u^{\ast}({\bf X}^{A})=(u^{\ast}{\bf X})^{A}$, 
hence ${\bf M}^{\Delta^{op}Set}$ is tensored and cotensored
over {\bf M}, with tensor $A\bigstar (S,{\bf X})=(S,A\bigstar{\bf X})$ 
and cotensor $(S,{\bf X})^{A}=(S,{\bf X}^{A})$.

\subsection{The category of pre-{\bf M}-categories}
Let $S$ be a set. Recall from section 1 the inclusion 
$\sigma_{0}:S\subset \Delta S$.
\begin{definition}
Let {\bf M} be a category with terminal object.
The category ${\bf M}^{\Delta^{op}Set}_{\ast}$ is 
the full subcategory of ${\bf M}^{\Delta^{op}Set}$ on objects 
$(S,{\bf X})$ such that $\sigma_{0}^{\ast}{\bf X}=\ast$.
The objects of ${\bf M}^{\Delta^{op}Set}_{\ast}$ are referred to as
{\bf pre}-{\bf M}-{\bf categories}, and the arrows of 
${\bf M}^{\Delta^{op}Set}_{\ast}$ as 
{\bf pre}-{\bf M}-{\bf functors}. If $(S,{\bf X})$ is a pre-{\bf M}-category,
then $S$ is its {\bf set of objects}.
\end{definition}
The category ${\bf M}^{\Delta^{op}Set}_{\ast}$
is denoted by $\mathcal{PC}(\mathcal{M})$ in \cite{Si}.
We let $K$ be the inclusion functor 
${\bf M}^{\Delta^{op}Set}_{\ast}
\subset {\bf M}^{\Delta^{op}Set}$.
$K$ has a left adjoint $r$ calculated 
(1.2$(a)$) as $r((S,{\bf X}))=(S,r{\bf X})$.
We denote by $Ob$ the composite $ObK$, and by 
$D$ the composite $rD$. One has
$$DS(([n],s_{0},...,s_{n}))=
 \begin{cases}
\ast, & \text{if }  s_{0}=...=s_{n}\\
 \emptyset, & \text{otherwise}
\end{cases} $$ 
Note that $\iota:Set\rightarrow {\bf M}^{\Delta^{op}Set}$
takes values in ${\bf M}^{\Delta^{op}Set}_{\ast}$, $D$ and 
$\iota$ are full and faithful, $D$ is left adjoint to $Ob$ and $\iota$ 
is right adjoint to $Ob$. The functor $Ob$ is a cloven Grothendieck 
bifibration. An arrow $(u,f):(S,{\bf X})\rightarrow (T,{\bf Y})$ is 
cartesian if and only if $f$ is an isomorphism \cite[10.3]{Si}. 
The fibre category of $Ob$ over a set $S$ is
${\bf M}^{\Delta^{op}S}_{\ast}$. $K$ becomes a 
cartesian functor. To give a map 
$(\ast,\ast)\rightarrow (S,{\bf X})$ in
${\bf M}^{\Delta^{op}Set}_{\ast}$ is to 
give an object of {\bf X}.

We shall compute $Set^{\Delta^{op}Set}_{\ast}$. Recall
that {\bf S} denotes the category of simplicial sets. Recall also
that for every small category $\mathcal{C}$ and every 
$X\in Set^{\mathcal{C}^{op}}$ there is an equivalence of categories 
$$(Set^{\mathcal{C}^{op}}\downarrow X)\overset{\simeq}
\rightarrow Set^{(y\downarrow X)^{op}}$$ 
In particular, for every set $U$ there is an equivalence
of categories $$({\bf S}\downarrow N\iota U)\overset{\simeq}
\rightarrow Set^{\Delta^{op}U}$$ 
Under the above equivalence ${\bf S}_{U}$
corresponds to $Set^{\Delta^{op}U}_{\ast}$. It
follows that {\bf S} is equivalent to 
$Set^{\Delta^{op}Set}_{\ast}$. 

More generally, we compute 
$(Set^{\mathcal{C}})^{\Delta^{op}Set}_{\ast}$.
Let $U$ be a set. From the previous calculation and 1.2$(d)$
it follows that $(Set^{\mathcal{C}})^{\Delta^{op}U}_{\ast}$
is equivalent to the full subcategory of 
$Set^{\Delta^{op}\times \mathcal{C}}$ consisting of those objects
$X:\Delta^{op}\times \mathcal{C}\rightarrow Set$ such that 
$X([0],c)=U$ for all $c\in \mathcal{C}$. If $\mathcal{C}$ 
is connected, $(Set^{\mathcal{C}})^{\Delta^{op}Set}_{\ast}$
is equivalent with the full subcategory of 
$Set^{\Delta^{op}\times \mathcal{C}}$ 
consisting of those objects which take every map in 
$\{[0]\}\times \mathcal{C}$ to an isomorphism. 
For example, ${\bf S}^{\Delta^{op}Set}_{\ast}$
is the full subcategory of ${\bf S}^{\Delta^{op}}$ on 
those bisimplicial sets $X$ with $X_{0}$ a constant/discrete 
simplicial set. For a general $\mathcal{C}$,
let $\mathcal{D}$ be the pushout
\[
   \xymatrix{
\{[0]\}\times \mathcal{C} \ar[r] \ar[d] & 
\Delta^{op} \times \mathcal{C} \ar[d]\\
\ast \ar[r] & \mathcal{D}\\
}
  \]
Then $(Set^{\mathcal{C}})^{\Delta^{op}Set}_{\ast}$
is equivalent to $Set^{\mathcal{D}}$.  

Suppose that {\bf M} is a closed category. Then
${\bf M}^{\Delta^{op}Set}_{\ast}$ is cotensored over {\bf M}, 
with cotensor $(S,{\bf X})^{A}=(S,(K{\bf X})^{A})$.
Since $K((S,{\bf X})^{A})=(K(S,{\bf X}))^{A}$, it follows that 
${\bf M}^{\Delta^{op}Set}_{\ast}$ is tensored over {\bf M}, 
with tensor $A\bigstar (S,{\bf X})=r(A\bigstar K(S,{\bf X}))$.

Let $(u,f):(S,{\bf X})\rightarrow (T,{\bf Y})$ be a map in 
${\bf M}^{\Delta^{op}Set}_{\ast}$. Then $(u,f)$
decomposes (like any map of a bifibration)
as $(S,{\bf X})\rightarrow (S,u^{\ast}{\bf Y})
\rightarrow (T,{\bf Y})$ and as $(S,{\bf X})\rightarrow (T,u_{!}{\bf X})
\rightarrow (T,{\bf Y})$. When $u$ is a monomorphism, 
$u_{!}$ has a convenient description \cite[10.3]{Si}:
$$u_{!}{\bf X}(([n],t_{0},...,t_{n}))=
  \begin{cases}
{\bf X}(([n],s_{0},...,s_{n})), & \text{if }  
t_{i}=u(s_{i}), 0\leq i\leq n\\
\ast, & \text{if } t_{0}=...=t_{n}\in T-S\\
 \emptyset, & \text{otherwise}
\end{cases} $$ 
\begin{lem}
Let $(u,f):(S,{\bf X})\rightarrow (T,{\bf Y})$ be a map in 
${\bf M}^{\Delta^{op}Set}_{\ast}$. The following 
are equivalent:

$(a)$ $(u,f)$ is a monomorphism;

$(b)$  $u$ is a monomorphism and 
$f:{\bf X}\rightarrow u^{\ast}{\bf Y}$ is a
monomorphism in 
${\bf M}^{\Delta^{op}S}_{\ast}$;

$(c)$ $u$ is a monomorphism and 
$u_{!}{\bf X}\rightarrow {\bf Y}$ is a 
monomorphism in 
${\bf M}^{\Delta^{op}T}_{\ast}$.
\end{lem}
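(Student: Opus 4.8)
The plan is to reduce everything to monomorphisms in the individual fibre categories $\mathbf{M}^{\Delta^{op}S}_{\ast}$ and to exploit the bifibration structure of $Ob$. First I would record two elementary facts. (i) Since $K\colon\mathbf{M}^{\Delta^{op}S}_{\ast}\subset\mathbf{M}^{\Delta^{op}S}$ is full and faithful and admits the left adjoint $r$ $(1.2(a))$, a map $m$ is a monomorphism in $\mathbf{M}^{\Delta^{op}S}_{\ast}$ if and only if $Km$ is one in $\mathbf{M}^{\Delta^{op}S}$, i.e. if and only if $m$ is an objectwise monomorphism in $\mathbf{M}$; indeed $K$ reflects monomorphisms because it is full and faithful, and preserves them because it is a right adjoint. (ii) For a function $v$, the restriction functor $v^{\ast}$ is computed objectwise, hence preserves monomorphisms. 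I would also use the composition rule $(u,f)\circ(v,g)=(uv,\,v^{\ast}(f)\circ g)$ in $\mathbf{M}^{\Delta^{op}Set}_{\ast}$.

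For (a)$\Leftrightarrow$(b) I would argue directly. If $(u,f)$ is a monomorphism then, since $Ob$ has the left adjoint $D$, it preserves monomorphisms, so $u=Ob(u,f)$ is a monomorphism; testing against the vertical maps $(\mathrm{id}_{S},g),(\mathrm{id}_{S},h)\colon(S,\mathbf{W})\to(S,\mathbf{X})$ and using the composition rule shows that $f\circ g=f\circ h$ forces $g=h$, so $f$ is a monomorphism in the fibre. Conversely, assume (b). If $(u,f)(v,g)=(u,f)(w,h)$ then $uv=uw$, whence $v=w$ because $u$ is a monomorphism; the composition rule then gives $v^{\ast}(f)\circ g=v^{\ast}(f)\circ h$, and since $v^{\ast}(f)$ is a monomorphism by (i)--(ii) we conclude $g=h$. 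Thus $(u,f)$ is a monomorphism.

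For (b)$\Leftrightarrow$(c) I would use the adjunction $u_{!}\dashv u^{\ast}$ between the fibres and the transpose $\hat{f}\colon u_{!}\mathbf{X}\to\mathbf{Y}$ of $f$. Because $u$ is a monomorphism, the explicit formula for $u_{!}$ recalled just before the lemma shows that the unit $\eta\colon\mathbf{X}\to u^{\ast}u_{!}\mathbf{X}$ is an isomorphism: evaluating $u^{\ast}u_{!}\mathbf{X}$ at $([n],s_{0},\dots,s_{n})$ returns $\mathbf{X}(([n],s_{0},\dots,s_{n}))$, the preimages $s_{i}$ being unique. The implication (c)$\Rightarrow$(b) is then immediate, since $u^{\ast}$ preserves monomorphisms: if $\hat{f}$ is a monomorphism then so is $f=u^{\ast}(\hat{f})\circ\eta$.

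The crux is (b)$\Rightarrow$(c). Here I would verify that $\hat{f}$ is an objectwise monomorphism, using fact (i) and the explicit description of $u_{!}\mathbf{X}$. At an object $([n],t_{0},\dots,t_{n})$ of $\Delta T$ there are three cases. If every $t_{i}=u(s_{i})$, the component of $\hat{f}$ is exactly the corresponding component of $f$, a monomorphism by (b). If $t_{0}=\dots=t_{n}\in T-u(S)$ with $n\geq 1$, the component is $\ast\to\mathbf{Y}(([n],t_{0},\dots,t_{n}))$, which is a monomorphism because any map out of the terminal object is one; the degree-zero components are $\mathrm{id}_{\ast}$ since $\sigma_{0}^{\ast}\mathbf{Y}=\ast$. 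In the remaining (mixed) case the component is $\emptyset\to\mathbf{Y}(([n],t_{0},\dots,t_{n}))$, and this is the one genuinely delicate point: it is a monomorphism precisely because, in the cartesian (closed) categories $\mathbf{M}$ relevant here the initial object is strict, so any map out of it is a monomorphism. Granting this, $\hat{f}$ is objectwise monic, hence a monomorphism in $\mathbf{M}^{\Delta^{op}T}_{\ast}$ by (i), which is (c). I expect this last case to be the main obstacle, and the only place where a hypothesis on $\mathbf{M}$ beyond (co)completeness enters; the objects responsible are exactly those of $\Delta T$ lying outside the image of $\Delta S$, on which $u^{\ast}$ carries no information.
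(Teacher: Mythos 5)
Your proof is correct and takes essentially the same route as the paper's own (two-line) proof, which disposes of $(a)\Leftrightarrow(b)$ as ``a standard argument'' and of $(b)\Leftrightarrow(c)$ ``by the description of $u_{!}{\bf X}$'' --- exactly your fibrewise, objectwise analysis. Your one additional point is genuine: the mixed case does require every map $\emptyset\rightarrow {\bf Y}(([n],t_{0},...,t_{n}))$ to be a monomorphism, an implicit hypothesis on {\bf M} (e.g.\ a strict initial object, as holds for cartesian closed {\bf M}, or cofibrations being monomorphisms as in the application in corollary 5.6) which the paper's proof passes over in silence.
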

\begin{proof}
$(a)\Leftrightarrow (b)$ is a standard argument.
$(b)\Leftrightarrow (c)$ by the description 
of $u_{!}{\bf X}$.
\end{proof}
\subsection{Relation with enriched categories}
Let {\bf M} be a cocomplete cartesian closed category. 
We denote by {\bf M}\text{-}{\bf Cat} the category of small 
categories enriched over {\bf M}. We recall \cite{Si},\cite{Lu} that
there is a functor $N:{\bf M}\text{-}{\bf Cat}\rightarrow 
{\bf M}^{\Delta^{op}Set}_{\ast}$ constructed as follows. 
If $\mathcal{A}$ is an {\bf M}-category, 
$N\mathcal{A}=(Ob\mathcal{A},N\mathcal{A})$,
where $$N\mathcal{A}(([n],s_{0},...,s_{n}))=
\begin{cases}
\ast, & \text{if }  n=0\\
\underline{\mathcal{A}}(s_{0},...,s_{n}), & \text{otherwise}
\end{cases} $$ 
and $\underline{\mathcal{A}}(s_{0},...,s_{n})=
\mathcal{A}(s_{0},s_{1})\times...\times \mathcal{A}(s_{n-1},s_{n})$.
$N$ is full and faithful and cartesian. A pre-{\bf M}-category {\bf X}
is in the essential image of $N$ if and only if 
for every object $([n],s_{0},...,s_{n})$ of $\Delta S$, 
the canonical map $${\bf X}(([n],s_{0},...,s_{n}))\rightarrow 
{\bf X}(([1],s_{0},s_{1}))\times...
\times {\bf X}(([1],s_{n-1},s_{n}))$$
is an isomorphism.

\section{Fibred Reedy model structures on pre-{\bf M}-categories}

For the next result, recall from example 2.4 the category $\Delta S$
and from proposition 2.1 the model category ${\bf M}^{\Delta^{op}S}_{\ast}$.
\begin{theorem}
Let {\bf M} be a model category.
The category ${\bf M}^{\Delta^{op}Set}_{\ast}$
admits a model category structure, denoted by
$f{\bf M}^{\Delta^{op}Set}_{\ast}$, in which a map
 $(u,f):(S,{\bf X})\rightarrow (T,{\bf Y})$  is a 

$\bullet$ weak equivalence if $u$ is bijective and 
$f:{\bf X}\rightarrow u^{\ast}{\bf Y}$ is a weak equivalence
in ${\bf M}^{\Delta^{op}S}_{\ast}$,

$\bullet$ cofibration if $u_{!}{\bf X}\rightarrow {\bf Y}$ 
is a cofibration in ${\bf M}^{\Delta^{op}T}_{\ast}$,

$\bullet$ fibration if $f:{\bf X}\rightarrow u^{\ast}{\bf Y}$
is a fibration in ${\bf M}^{\Delta^{op}S}_{\ast}$.
\end{theorem}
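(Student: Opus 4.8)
The plan is to verify the model category axioms directly, exploiting that $Ob:{\bf M}^{\Delta^{op}Set}_{\ast}\to Set$ is a Grothendieck bifibration with fibre ${\bf M}^{\Delta^{op}S}_{\ast}$ over $S$ (proposition 2.1 for the fibrewise structure), together with corollary 2.8, which guarantees that for every function $u:S\to T$ the restriction $u^{\ast}:{\bf M}^{\Delta^{op}T}_{\ast}\to {\bf M}^{\Delta^{op}S}_{\ast}$ is right Quillen. Throughout I use the two factorizations of a map $(u,f):(S,{\bf X})\to (T,{\bf Y})$ recalled in section 4, namely the cocartesian one $(S,{\bf X})\to (T,u_{!}{\bf X})\to (T,{\bf Y})$ and the cartesian one $(S,{\bf X})\to (S,u^{\ast}{\bf Y})\to (T,{\bf Y})$. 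For the cocartesian lift the associated map $u_{!}{\bf X}\to u_{!}{\bf X}$ is the identity, so it is a cofibration; for the cartesian lift the associated map $u^{\ast}{\bf Y}\to u^{\ast}{\bf Y}$ is the identity, so it is a fibration. I also record that when $u$ is bijective $u_{!}$ is inverse to $u^{\ast}$ and is an isomorphism of model categories; consequently the trivial cofibrations are exactly the $(u,f)$ with $u$ bijective and $f:{\bf X}\to u^{\ast}{\bf Y}$ a trivial cofibration in the fibre, and the trivial fibrations are exactly the $(w,p)$ with $w$ bijective and $p$ a trivial fibration in the fibre.

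The two factorizations are obtained by combining the (co)cartesian lifts with the fibrewise factorizations of proposition 2.1. To factor $(u,f)$ as a cofibration followed by a trivial fibration, I take the cocartesian lift $(S,{\bf X})\to (T,u_{!}{\bf X})$ and then factor the vertical map $u_{!}{\bf X}\to {\bf Y}$ in ${\bf M}^{\Delta^{op}T}_{\ast}$ as a cofibration $u_{!}{\bf X}\to {\bf W}$ followed by a trivial fibration ${\bf W}\to {\bf Y}$; then $(S,{\bf X})\to (T,{\bf W})$ is a cofibration and $(T,{\bf W})\to (T,{\bf Y})$ a trivial fibration. Dually, to factor $(u,f)$ as a trivial cofibration followed by a fibration, I factor $f:{\bf X}\to u^{\ast}{\bf Y}$ in ${\bf M}^{\Delta^{op}S}_{\ast}$ as a trivial cofibration ${\bf X}\to {\bf V}$ followed by a fibration ${\bf V}\to u^{\ast}{\bf Y}$, and then take the cartesian lift; here $(S,{\bf X})\to (S,{\bf V})$ is a trivial cofibration and $(S,{\bf V})\to (T,{\bf Y})$ a fibration.

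The lifting axioms are the \emph{main obstacle}, because the source and target sets of the two maps in a lifting square are all distinct; the key observation is that the trivial map in each case has bijective base, which both forces the lift on the level of sets and allows me to transport the fibre part of the problem into a single fibre. For a square with $(u,f)$ a cofibration on the left and $(w,p)$ a trivial fibration on the right, with base maps $a:S\to U$, $b:T\to V$ and $w$ bijective, the unique diagonal of sets is $d=w^{-1}b$, and $du=a$, $wd=b$. By corollary 2.8 the functor $d^{\ast}$ is right Quillen, so $d^{\ast}(p):d^{\ast}{\bf Z}\to b^{\ast}{\bf W}$ is a trivial fibration of ${\bf M}^{\Delta^{op}T}_{\ast}$; transposing the top map of the square across the adjunction $(u_{!},u^{\ast})$ and using $a^{\ast}=u^{\ast}d^{\ast}$, the square becomes a lifting problem in ${\bf M}^{\Delta^{op}T}_{\ast}$ of the cofibration $u_{!}{\bf X}\to {\bf Y}$ against $d^{\ast}(p)$, whose solution transposes back to the required fibre part $\ell:{\bf Y}\to d^{\ast}{\bf Z}$. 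The case of a trivial cofibration against a fibration is symmetric: now $u$ is bijective, the forced diagonal is $d=au^{-1}$, the fibration $p$ restricts along the right Quillen functor $a^{\ast}$ to a fibration over $S$, and using that $u^{\ast}$ is an isomorphism of fibres one obtains a lifting problem over $S$ of the trivial cofibration $f$ against $a^{\ast}(p)$, whose solution transports back along $u^{\ast}$.

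It remains to organize these into the two weak factorization systems and to treat two-out-of-three and retracts. The reverse inclusions come cheaply from the bifibration structure: if $(u,f)$ has the left lifting property against all trivial fibrations, then testing against the vertical trivial fibrations $(\mathrm{id}_{T},q)$, whose base lift is forced to be $\mathrm{id}_{T}$, shows that $u_{!}{\bf X}\to {\bf Y}$ has the left lifting property against every trivial fibration of ${\bf M}^{\Delta^{op}T}_{\ast}$, hence is a cofibration; the other three cases are analogous. Thus cofibrations, trivial fibrations, trivial cofibrations and fibrations are each defined by a lifting property, and are in particular closed under retracts. Two-out-of-three for the weak equivalences is immediate, since bijections of sets satisfy two-out-of-three and, the base maps being bijective, the functors $u^{\ast}$ are isomorphisms of model categories, reducing the fibre condition to two-out-of-three in the fibres. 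The retract-closure of the weak equivalences then follows from the two weak factorization systems together with two-out-of-three, and the proof is complete.
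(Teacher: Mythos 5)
Your proof takes a genuinely different route from the paper's: the paper equips $Set$ with the minimal model structure and simply checks, via corollary 2.8 and the bifibration $Ob$, that the hypotheses of the general bifibration theorem \cite[2.3, Theorem]{St} are satisfied, whereas you reprove the relevant special case of that theorem from scratch. Most of your verification is correct: the identification of the trivial cofibrations and trivial fibrations (using that $u^{\ast}$ and $u_{!}=(u^{-1})^{\ast}$ are mutually inverse right Quillen functors when $u$ is bijective), the two factorizations obtained by combining (co)cartesian lifts with fibrewise factorizations, the lifting argument with the forced diagonal on bases and transposition into a single fibre, and two-out-of-three.

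The gap is the sentence ``the other three cases are analogous.'' It is analogous only for fibrations as the maps with the RLP against trivial cofibrations, because fibrations, like cofibrations, impose no condition on the base map, so the vertical test maps $(\mathrm{id},q)$ suffice. For the two classes that do constrain the base, the vertical test cannot do the job: a map $(w,p)$ with the RLP against all cofibrations must be shown to have \emph{bijective} $w$, and a map $(u,f)$ with the LLP against all fibrations must be shown to have \emph{bijective} $u$; lifting against maps with identity base gives no information whatsoever about the base. You need test maps with non-identity base, and they are available from the structure recalled in section 4.1: for any function $v$ the map $D(v)$ is a cofibration (its comparison map $v_{!}$ applied to an initial object is a map between initial objects of a fibre, since $v_{!}$ preserves initial objects), and lifting against $D(\emptyset\to \{0\})$ and $D(\{0,1\}\to \{0\})$ forces $w$ to be surjective, respectively injective; dually, $\iota(v)$ is always a fibration (its fibre part is the identity of $\ast$), and lifting $(u,f)$ against $\iota(u)$ in the square formed by the units of $Ob\dashv\iota$ forces $u$ to be invertible. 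This is not a cosmetic point in your architecture: your retract-closure of the weak equivalences invokes the folklore lemma that needs (cofibrations, trivial fibrations) and (trivial cofibrations, fibrations) to be genuine weak factorization systems, which is exactly the pair of characterizations that ``analogous'' fails to deliver. With these two test families supplied (and the routine remark, which you never address, that ${\bf M}^{\Delta^{op}Set}_{\ast}$ is complete and cocomplete), your proof closes up.
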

\begin{proof}
The functor $Ob:{\bf M}^{\Delta^{op}Set}_{\ast}\rightarrow Set$
is a bifibration (section 4.1). Let $Set$ have the 
minimal model structure, in which the weak
equivalences are the isomorphisms and all 
maps are cofibrations as well as fibrations.
Using corollary 2.8 one can check that the conditions 
of \cite[2.3, Theorem]{St} are satisfied.
\end{proof}
\begin{definition}
Let {\bf M} be a left proper, combinatorial model category. 
A map $(u,f):(S,{\bf X})\rightarrow (T,{\bf Y})$ 
of ${\bf M}^{\Delta^{op}Set}_{\ast}$ is a 

$(a)$  {\bf fibred weak equivalence} if $u$ is bijective and 
$f:{\bf X}\rightarrow u^{\ast}{\bf Y}$ is a weak equivalence
in ${\bf M}^{\Delta^{op}S}_{\ast,S}$;

$(b)$ {\bf fibred cofibration} if $u_{!}{\bf X}\rightarrow {\bf Y}$ 
is a cofibration in ${\bf M}^{\Delta^{op}T}_{\ast,S}$;

$(c)$ {\bf fibred fibration} if $f:{\bf X}\rightarrow u^{\ast}{\bf Y}$
is a fibration in ${\bf M}^{\Delta^{op}S}_{\ast,S}$.
\end{definition}
A map which is both a fibred weak equivalence and fibred cofibration is
called isotrivial cofibration in \cite[14.3]{Si}.
\begin{theorem}
Let {\bf M} be a left proper, combinatorial model category. The classes of
fibred weak equivalences, fibred cofibrations and fibred fibrations
form a model category structure on ${\bf M}^{\Delta^{op}Set}_{\ast}$.
We denote this model structure by $f{\bf M}^{\Delta^{op}Set}_{\ast,S}$.
$f{\bf M}^{\Delta^{op}Set}_{\ast,S}$ is a left Bousfield 
localization of $f{\bf M}^{\Delta^{op}Set}_{\ast}$.
\end{theorem}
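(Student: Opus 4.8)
The plan is to build the model structure exactly as in the proof of Theorem 5.1 and then to recognise it as a left Bousfield localization of $f{\bf M}^{\Delta^{op}Set}_{\ast}$. For existence I would apply \cite[2.3, Theorem]{St} to the bifibration $Ob:{\bf M}^{\Delta^{op}Set}_{\ast}\rightarrow Set$ of section 4.1, with $Set$ carrying the minimal model structure and each fibre ${\bf M}^{\Delta^{op}S}_{\ast}$ now carrying the Segal structure ${\bf M}^{\Delta^{op}S}_{\ast,S}$ of Theorem 2.3. The verification is word for word that of Theorem 5.1: in the minimal structure on $Set$ every map is simultaneously a cofibration and a fibration and the weak equivalences are the bijections, so the hypotheses of \cite[2.3, Theorem]{St} reduce to the assertion that $f_{!}':{\bf M}^{\Delta^{op}S}_{\ast,S}\rightleftarrows{\bf M}^{\Delta^{op}T}_{\ast,S}:f^{\ast}$ is a Quillen pair for every function $f:S\rightarrow T$, which is Corollary 2.9 (used in place of Corollary 2.8). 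That the three resulting classes are exactly those of Definition 5.2 is immediate, since for a bijection $u$ the functors $u_{!}$ and $u^{\ast}$ are mutually inverse isomorphisms of categories.

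Two comparisons with $f{\bf M}^{\Delta^{op}Set}_{\ast}$ are then at hand. The two structures have the same cofibrations, because $(u,f)$ is a fibred cofibration in either case precisely when $u_{!}{\bf X}\rightarrow{\bf Y}$ is a cofibration in the relevant fibre, and by Theorem 2.3 the cofibrations of ${\bf M}^{\Delta^{op}T}_{\ast,S}$ agree with those of ${\bf M}^{\Delta^{op}T}_{\ast}$. Furthermore every weak equivalence of $f{\bf M}^{\Delta^{op}Set}_{\ast}$ is a weak equivalence of $f{\bf M}^{\Delta^{op}Set}_{\ast,S}$: such a map has $u$ bijective and $f$ a weak equivalence in ${\bf M}^{\Delta^{op}S}_{\ast}$, hence a weak equivalence in its localization ${\bf M}^{\Delta^{op}S}_{\ast,S}$. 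Thus the identity is a left Quillen functor $f{\bf M}^{\Delta^{op}Set}_{\ast}\rightarrow f{\bf M}^{\Delta^{op}Set}_{\ast,S}$.

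To certify that this presents the target as a genuine left Bousfield localization I would produce an explicit localizing set. Each map of the set $\mathfrak{S}$ attached to $\mathcal{C}=\Delta S$ is supported on objects $([n],s_{0},\ldots,s_{n})$ involving only finitely many elements of $S$; letting $S$ range over a skeleton of the finite sets and viewing each such $g$ as the fibred map $(\mathrm{id}_{S},g)$, one obtains a genuine set $\mathfrak{S}'$ of maps of ${\bf M}^{\Delta^{op}Set}_{\ast}$, and $L_{\mathfrak{S}'}(f{\bf M}^{\Delta^{op}Set}_{\ast})$ exists by the combinatorial, left proper hypotheses inherited from the fibres. Since this localization has the same cofibrations as $f{\bf M}^{\Delta^{op}Set}_{\ast,S}$, by the recognition principle of \cite[Chapter 3]{Hi} and the appendix it remains only to match their fibrant objects: an $f{\bf M}^{\Delta^{op}Set}_{\ast}$-fibrant object $(S,{\bf X})$ is $\mathfrak{S}'$-local exactly when ${\bf X}$ carries each $g\in\mathfrak{S}$ to a weak equivalence, which by Theorem 2.3 is precisely the Segal condition defining fibrancy in $f{\bf M}^{\Delta^{op}Set}_{\ast,S}$.

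I expect the fibrant-object comparison to be the main obstacle, and it rests on two points. First, one must check that the weak equivalences of $L_{\mathfrak{S}'}(f{\bf M}^{\Delta^{op}Set}_{\ast})$ remain fibrewise, that is, lie over bijective $u$, so that they can coincide with the maps of Definition 5.2(a); this should follow from the fact that $Ob$ is both a left and a right Quillen functor to the minimal $Set$ and remains one after localizing at $\mathfrak{S}'$, forcing the image of any weak equivalence to be a bijection. Second, one must identify the total mapping object out of the finitely supported source of $(\mathrm{id}_{S},g)$ with the corresponding fibrewise mapping object, after which the characterization of Segal-fibrant objects in Theorem 2.3 applies verbatim fibre by fibre. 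Both points are routine but should be verified, as should the accessibility of ${\bf M}^{\Delta^{op}Set}_{\ast}$ underlying the existence of $L_{\mathfrak{S}'}$.
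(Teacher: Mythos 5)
Your first two paragraphs are, in substance, exactly the paper's proof. The existence argument (the bifibration $Ob:{\bf M}^{\Delta^{op}Set}_{\ast}\rightarrow Set$, the minimal model structure on $Set$, the Segal structures ${\bf M}^{\Delta^{op}S}_{\ast,S}$ on the fibres, and corollary 2.9 feeding the hypotheses of \cite[2.3, Theorem]{St}) is word for word what the paper does in theorem 5.3, and your second paragraph is what the paper compresses into ``the rest is clear'': the two structures have the same cofibrations (theorem 2.3 does not change the cofibrations of the fibres), the weak equivalences are nested, and hence the identity is left Quillen. From these two facts it follows formally that $f{\bf M}^{\Delta^{op}Set}_{\ast,S}$ is the left Bousfield localization of $f{\bf M}^{\Delta^{op}Set}_{\ast}$ with respect to the \emph{class} of fibred weak equivalences, which is all that the definition adopted in the appendix \cite[3.3.1(1)]{Hi} requires. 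You could therefore have stopped after your second paragraph: Hirschhorn's notion asks for a class of maps, not a set.

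The detour in your last two paragraphs, exhibiting an explicit localizing set $\mathfrak{S}'$, is a strictly stronger claim than the theorem makes, and it contains a genuine gap: to form $L_{\mathfrak{S}'}(f{\bf M}^{\Delta^{op}Set}_{\ast})$ by Smith's theorem or by \cite[Chapter 4]{Hi} you need the \emph{total} category $f{\bf M}^{\Delta^{op}Set}_{\ast}$ to be left proper (and combinatorial), and left properness is not ``inherited from the fibres''. A cofibration of $f{\bf M}^{\Delta^{op}Set}_{\ast}$ lies over an \emph{arbitrary} function $u$, and pushing a fibred weak equivalence out along it amounts, in the fibre over the target, to pushing out $u_{!}$ of that weak equivalence along a fibre cofibration; for non-injective $u$ the left Quillen functor $u_{!}$ need not preserve weak equivalences between non-cofibrant objects. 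This is precisely the difficulty that lemma 5.10 avoids by restricting to Reedy cofibrations, where $u$ is injective and the explicit formula for $u_{!}$ applies. So unless you add hypotheses (e.g. all objects of the fibres cofibrant, as happens for ${\bf M}={\bf S}$), the localization you want to compare with is not known to exist. Two smaller points: the recognition principle you invoke (same cofibrations and same fibrant objects force equal model structures) is true, but it is not in \cite[Chapter 3]{Hi} nor in the paper's appendix; and your justification of the first ``routine point'' is circular as stated, since whether $Ob$ remains left Quillen after localizing is exactly the question of whether the new trivial cofibrations lie over bijections --- it does hold, but by the universal property of localization ($Ob$ sends the maps of $\mathfrak{S}'$, which are fibrewise, to isomorphisms), while the right Quillen half holds because localization keeps the trivial fibrations and only shrinks the fibrations.
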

\begin{proof}
The functor $Ob:{\bf M}^{\Delta^{op}Set}_{\ast}\rightarrow Set$
is a bifibration (section 4.1). Let $Set$ have the
minimal model structure, in which the weak
equivalences are the isomorphisms and all 
maps are cofibrations as well as fibrations. For every
set $S$, we let the category ${\bf M}^{\Delta^{op}S}_{\ast}$
have the model structure ${\bf M}^{\Delta^{op}S}_{\ast,S}$
of theorem 2.3. Using corollary 2.9 one can check that the conditions 
of \cite[2.3, Theorem]{St} are satisfied. The rest is clear.
\end{proof}
\subsection{A weak factorization system}
\begin{definition}
Let {\bf M} be a left proper, combinatorial model category.
We say that a map $(u,f):(S,{\bf X})\rightarrow (T,{\bf Y})$ 
of ${\bf M}^{\Delta^{op}Set}_{\ast}$ is a 

$(a)$ {\bf Reedy cofibration} if $u$ is injective and 
$u_{!}{\bf X}\rightarrow {\bf Y}$ is a cofibration
in ${\bf M}^{\Delta^{op}T}_{\ast,S}$;

$(b)$ {\bf Reedy trivial fibration} if $u$ is surjective and 
$f:{\bf X}\rightarrow u^{\ast}{\bf Y}$ is a trivial fibration
in ${\bf M}^{\Delta^{op}S}_{\ast,S}$.
\end{definition}
\begin{proposition}
Let {\bf M} be a left proper, combinatorial model category.
The pair {\rm (Reedy cofibrations, Reedy trivial fibrations)}
is a weak factorization system on ${\bf M}^{\Delta^{op}Set}_{\ast}$.
\end{proposition}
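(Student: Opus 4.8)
The plan is to verify directly the two defining features of a weak factorization system—the lifting property between the two classes, and the factorization of an arbitrary map—and then to upgrade these to the orthogonality equalities by closure under retracts. The construction merely superimposes two weak factorization systems: on the base category $Set$ we use (injections, surjections), which is a weak factorization system by the axiom of choice (given an injection on the left and a surjection on the right, a set-theoretic section of the surjection produces a diagonal; and any function $u$ factors as $S\hookrightarrow S\sqcup T\twoheadrightarrow T$); and on each fibre ${\bf M}^{\Delta^{op}S}_{\ast,S}$ we use its own (cofibrations, trivial fibrations), supplied by theorem 2.3. All the interaction between base and fibres is governed by corollary 2.9, which makes each $w^{\ast}$ a right Quillen functor for the Segal structures.

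For the lifting property, I would consider a commutative square whose left edge is a Reedy cofibration $(u,f)\colon (S,{\bf X})\to (T,{\bf Y})$ and whose right edge is a Reedy trivial fibration $(v,g)\colon (S',{\bf X}')\to (T',{\bf Y}')$, with top edge $(a,\varphi)$ and bottom edge $(b,\psi)$. First I would solve the induced square of sets: here $u$ is injective, $v$ is surjective and $bu=va$, so the $(\text{injections},\text{surjections})$ system on $Set$ yields $w\colon T\to S'$ with $wu=a$ and $vw=b$. It then remains to produce a map $h\colon {\bf Y}\to w^{\ast}{\bf X}'$ in the fibre over $T$ satisfying $(w^{\ast}g)\circ h=\psi$ and $u^{\ast}h\circ f=\varphi$; these two identities are exactly what it means for $(w,h)$ to be the sought diagonal. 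I would obtain $h$ as a lift in the square
\[
\xymatrix{
u_{!}{\bf X} \ar[r]^{\widetilde{\varphi}} \ar[d] & w^{\ast}{\bf X}' \ar[d]^{w^{\ast}g} \\
{\bf Y} \ar[r]_{\psi} & b^{\ast}{\bf Y}' \\
}
\]
whose left edge is the cofibration $u_{!}{\bf X}\to {\bf Y}$ adjoint to $f$ (this is the content of $(u,f)$ being a Reedy cofibration), whose right edge $w^{\ast}g\colon w^{\ast}{\bf X}'\to w^{\ast}v^{\ast}{\bf Y}'=b^{\ast}{\bf Y}'$ is a trivial fibration by corollary 2.9, and whose top edge $\widetilde{\varphi}$ is the transpose of $\varphi\colon {\bf X}\to a^{\ast}{\bf X}'=u^{\ast}w^{\ast}{\bf X}'$ across $u_{!}\dashv u^{\ast}$. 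The square commutes: transposing both composites back across $u_{!}\dashv u^{\ast}$ turns them into $a^{\ast}g\circ\varphi$ and $u^{\ast}\psi\circ f$, which coincide by the commutativity of the original outer square. A diagonal $h$ then exists since the fibre is a model category, and one final transposition shows it verifies the two required identities.

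For the factorization of an arbitrary $(u,f)\colon (S,{\bf X})\to (T,{\bf Y})$, I would first factor $u=p\,i$ in $Set$ with $i\colon S\to U$ injective and $p\colon U\to T$ surjective. In the fibre over $U$ I would then factor the map $i_{!}{\bf X}\to p^{\ast}{\bf Y}$ adjoint to $f$ as a cofibration $i_{!}{\bf X}\to {\bf Z}$ followed by a trivial fibration ${\bf Z}\to p^{\ast}{\bf Y}$, using the factorization axiom in ${\bf M}^{\Delta^{op}U}_{\ast,S}$. Transposing the first map across $i_{!}\dashv i^{\ast}$ presents $(i,-)\colon (S,{\bf X})\to (U,{\bf Z})$ as a Reedy cofibration; the second map is precisely the datum making $(p,-)\colon (U,{\bf Z})\to (T,{\bf Y})$ a Reedy trivial fibration; and the composite recovers $(u,f)$.

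Finally I would conclude. Both classes are closed under retracts: injectivity and surjectivity are retract-closed in $Set$, cofibrations and trivial fibrations are retract-closed in the fibres, and the functoriality of $u\mapsto u_{!}$ and $u\mapsto u^{\ast}$ ensures that a retract in ${\bf M}^{\Delta^{op}Set}_{\ast}$ induces retracts of the relevant fibre maps. Given the lifting property and the factorization, the standard retract argument then yields the orthogonality equalities, so the pair (Reedy cofibrations, Reedy trivial fibrations) is a weak factorization system. I expect the main obstacle to be the bookkeeping in the lifting step: one must track the cartesian and cocartesian pieces of the bifibration and verify the transposition identities carefully, so that the fibre square genuinely commutes and its diagonal really does solve the original lifting problem.
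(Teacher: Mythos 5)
Your proposal is correct and takes essentially the same route as the paper: the paper's proof observes that $Ob:{\bf M}^{\Delta^{op}Set}_{\ast}\rightarrow Set$ is a bifibration, puts the (monomorphisms, epimorphisms) weak factorization system on $Set$ and (cofibrations, trivial fibrations) of ${\bf M}^{\Delta^{op}S}_{\ast,S}$ on each fibre, and then invokes corollary 2.9 together with the general result \cite[2.2]{St} on assembling weak factorization systems over a bifibration. Your lifting argument (solve the base square by choice, then lift $u_{!}{\bf X}\rightarrow{\bf Y}$ against $w^{\ast}g$ after transposing across $u_{!}\dashv u^{\ast}$), your factorization through $S\hookrightarrow U\twoheadrightarrow T$, and your retract-closure step are precisely an inline verification of the hypotheses and mechanism of that cited theorem, with corollary 2.9 supplying the same base--fibre compatibility the paper uses.
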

\begin{proof}
The functor $Ob:{\bf M}^{\Delta^{op}Set}_{\ast}\rightarrow Set$
is a bifibration. Let $Set$ have the weak factorization system
(monomorphisms, epimorphisms) and, for every set $S$, 
${\bf M}^{\Delta^{op}S}_{\ast,S}$ have the weak factorization system
(cofibrations, trivial fibrations). Then corollary 2.9 and \cite[2.2]{St}
imply the proposition.
\end{proof}
For every simplicial set $X$, $(y\downarrow X)$ (example 2.4) 
is an EZ-Reedy category \cite[Definition 4.1]{BR}.
The next result is then a consequence of lemma 4.3.
\begin{corollary} {\rm (compare with \cite[13.7.2]{Si})}
Let {\bf M} be a combinatorial model category such 
that the cofibrations of {\bf M} are the monomorphisms.
Then a map of ${\bf M}^{\Delta^{op}Set}_{\ast}$
is a Reedy cofibration if and only if it is a monomorphism.
\end{corollary}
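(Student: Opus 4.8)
The plan is to reduce the two-sided statement, by means of the characterizations already at our disposal, to a single assertion inside one fibre of $Ob$, and then to invoke the fact, recorded just above, that $\Delta T$ is an EZ-Reedy category.

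First I would unwind both conditions for a fixed map $(u,f):(S,{\bf X})\rightarrow (T,{\bf Y})$. By definition 5.4$(a)$, $(u,f)$ is a Reedy cofibration precisely when $u$ is injective and $u_{!}{\bf X}\rightarrow {\bf Y}$ is a cofibration in ${\bf M}^{\Delta^{op}T}_{\ast,S}$. By the equivalence of $(a)$ and $(c)$ in lemma 4.3, $(u,f)$ is a monomorphism precisely when $u$ is injective and $u_{!}{\bf X}\rightarrow {\bf Y}$ is a monomorphism in ${\bf M}^{\Delta^{op}T}_{\ast}$. The condition imposed on $u$ is the same on both sides, so the corollary is equivalent to the following fibrewise statement: a map $g$ of ${\bf M}^{\Delta^{op}T}_{\ast}$ is a cofibration if and only if it is a monomorphism.

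Next I would dispose of this fibrewise statement. By theorem 2.3 the cofibrations of ${\bf M}^{\Delta^{op}T}_{\ast,S}$ are exactly those of ${\bf M}^{\Delta^{op}T}_{\ast}$, and by proposition 2.1 together with 1.2$(a)$ these are the maps $g$ for which $Kg$ is a Reedy cofibration in ${\bf M}^{\Delta^{op}T}$. On the other hand $K$ is a right adjoint, so it preserves monomorphisms, and it is full and faithful, so it reflects them; hence $g$ is a monomorphism in ${\bf M}^{\Delta^{op}T}_{\ast}$ if and only if $Kg$ is a monomorphism in ${\bf M}^{\Delta^{op}T}$. The problem thus reduces further to showing that in ${\bf M}^{\Delta^{op}T}$ the Reedy cofibrations are precisely the monomorphisms.

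Finally I would invoke the EZ-Reedy structure. Since the cofibrations of {\bf M} are the monomorphisms, the map $\emptyset\rightarrow X$ is a cofibration for every $X$, so every object of {\bf M} is cofibrant and {\bf M} is left proper; this guarantees that the model structures referred to above actually exist under the hypotheses of the corollary. Because $\Delta T=(y\downarrow N\iota T)$ is an EZ-Reedy category, as noted above, the comparison of Bergner--Rezk \cite{BR} identifies, for a model category whose cofibrations are the monomorphisms, the Reedy cofibrations of ${\bf M}^{\Delta^{op}T}$ with the monomorphisms, which is exactly what remains. I expect the only delicate point to lie not in the reduction, which is formal, but in the correct quotation of this comparison in the {\bf M}-valued generality rather than for set- or simplicial-set-valued presheaves; the mechanism behind it is that over an EZ-Reedy category every latching map of a monomorphism is a split, hence complemented, monomorphism, so that the iterated relative latching conditions defining a Reedy cofibration collapse to the single requirement that $g$ be an objectwise monomorphism.
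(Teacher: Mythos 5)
Your proposal is correct and follows essentially the same route as the paper, whose entire proof consists of noting that $(y\downarrow X)$ is an EZ-Reedy category in the sense of \cite[Definition 4.1]{BR} and declaring the corollary a consequence of lemma 4.3; your write-up simply fills in the reduction (via definition 5.4, theorem 2.3 and proposition 2.1) that the paper leaves implicit. The observation that left properness of {\bf M} comes for free from all objects being cofibrant is a worthwhile detail the paper glosses over, since definition 5.4 and theorem 2.3 nominally require it.
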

We recall \cite[Definition 3.16.2]{Ba} that a morphism $f:\mathcal{C}
\rightarrow \mathcal{D}$ of Reedy categories is a {\bf left fibration}
if for every model category {\bf M}, the adjoint pair
$f^{\ast}:{\bf M}^{\mathcal{D}^{op}}\rightleftarrows 
{\bf M}^{\mathcal{C}^{op}}:f_{\ast}$ is a Quillen pair.
\begin{lem}
\cite[Proof of Theorem 3.51]{Ba} Let 
$f:\mathcal{C}\rightarrow \mathcal{D}$ be a morphism
of Reedy categories. Suppose that $\mathcal{C}$ has fibrant 
constants and every arrow of the inverse subcategory 
$\overleftarrow{\mathcal{D}}$ is an epimorphism. 
Then $f$ is a left fibration.
\end{lem}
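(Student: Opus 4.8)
The plan is to unwind the definition of a left fibration: $f$ is a left fibration exactly when the adjunction $f^{\ast}\dashv f_{\ast}$ is a Quillen pair for the Reedy model structures on ${\bf M}^{\mathcal{D}^{op}}$ and ${\bf M}^{\mathcal{C}^{op}}$, that is, when $f^{\ast}$ is left Quillen. Because the Reedy weak equivalences are the objectwise ones and $f^{\ast}$ is restriction along $f$, the functor $f^{\ast}$ preserves weak equivalences automatically; hence it suffices to prove that $f^{\ast}$ preserves Reedy cofibrations, since preservation of trivial cofibrations then follows formally. A Reedy cofibration is detected by its relative latching maps, and for the contravariant diagrams at hand the latching object at $c\in\mathcal{C}$ is a colimit indexed by the inverse arrows out of $c$, i.e.\ by (the opposite of) the matching category of $\mathcal{C}$ at $c$; the morphism $f$, preserving the inverse subcategory, induces the comparison maps $L_{c}(f^{\ast}{\bf X})\to L_{f(c)}{\bf X}$ that control whether relative latching maps are preserved.

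Next I would reduce, exactly as in the proof of the right-fibration lemma above, to a purely combinatorial statement: that a certain family of comma categories is empty or connected. The right-fibration criterion of \cite{Ba} is phrased in terms of the \emph{direct} subcategories and a monomorphism $\lambda$ of $\overrightarrow{\mathcal{D}}$; dualising it (replacing every category by its opposite, which interchanges direct with inverse and monomorphisms with epimorphisms while preserving the property ``empty or connected'') yields the criterion for a left fibration: for each $c\in\mathcal{C}$ and each epimorphism $\mu$ of $\overleftarrow{\mathcal{D}}$ out of $f(c)$, the comma category formed from the inverse subcategories $\overleftarrow{\mathcal{C}}$ and $\overleftarrow{\mathcal{D}}$ and sliced over $\mu$ must be empty or connected. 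This is where the hypothesis that every arrow of $\overleftarrow{\mathcal{D}}$ is an epimorphism first enters: it guarantees that \emph{every} inverse arrow of $\mathcal{D}$ is one of the $\mu$ to be tested, so that no auxiliary factorisation is lost and the criterion may be checked against the inverse arrows of $\mathcal{D}$ directly.

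The heart of the argument---and the step I expect to cost the most---is verifying this emptiness-or-connectivity. Here I would use the epimorphism hypothesis a second, more essential time: since $\mu$ is epic, a factorisation of $\mu$ through an image $f(\eta)$ of an inverse arrow $\eta$ of $\mathcal{C}$ is determined by $\eta$, so the ``cancellation'' afforded by epimorphisms identifies the comma category sliced over $\mu$ with (a full subcategory of) the matching category of $\mathcal{C}$ at $c$. The assumption that $\mathcal{C}$ has fibrant constants says precisely that each matching category of $\mathcal{C}$ is empty or connected, which is exactly the conclusion required. The delicate point is to carry out this identification cleanly: one must check that the epimorphism property really does make the relevant factorisations unique up to the morphisms of the comma category, so that connectivity is inherited from the matching category of $\mathcal{C}$ rather than being destroyed by $f$ splitting it into several pieces. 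Once the comma categories are known to be empty or connected, the cited criterion gives that $f^{\ast}$ preserves Reedy cofibrations, hence is left Quillen, and therefore $f$ is a left fibration.
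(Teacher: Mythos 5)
Your plan follows the paper's own proof step for step: unwind ``left fibration'' to ``$f^{\ast}$ is left Quillen'', reduce by Barwick's criterion to showing that the comma categories $\partial(\alpha\downarrow \overleftarrow{(\overleftarrow{f}\downarrow d)})$, $\alpha=(c,\mu:f(c)\rightarrow d)$, are empty or connected, and then feed in the two hypotheses. Your epi-cancellation is exactly the paper's one displayed step: the ``lower triangle'' ($f(x)\rightarrow f(y)\rightarrow d$ versus $f(x)\rightarrow d$) commutes because both maps agree after precomposition with $f(c)\rightarrow f(x)$, which lies in $\overleftarrow{\mathcal{D}}$ and is therefore an epimorphism. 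One correction along the way: your first use of the hypothesis (``it guarantees that every inverse arrow of $\mathcal{D}$ is one of the $\mu$ to be tested'') is spurious. The word ``monomorphism'' in the proof of Lemma 2.7 is a feature of the particular Reedy category $(y\downarrow Y)$, not of the general criterion; the criterion tests \emph{all} arrows of $\overleftarrow{\mathcal{D}}$ out of $f(c)$, and the epimorphism hypothesis enters only once, in the cancellation step.

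The genuine gap is precisely the ``delicate point'' you flag and then leave open, and it cannot be closed from the stated hypotheses. Cancellation gives exactly this much: the comma category embeds as a full subcategory of the matching category $\partial(c\downarrow\overleftarrow{\mathcal{C}})$ (the factorization of $\mu$ through $f(u)$ is unique when it exists, and any matching-category morphism between two objects admitting factorizations is automatically compatible with them). But connectivity does not descend to full subcategories, and nothing in the hypotheses equips the intermediate objects of a connecting zig-zag with factorizations: along an arrow $x\rightarrow z$ of the matching category, a factorization of $\mu$ through $f(x)$ does not produce one through $f(z)$. This failure is real, not an artifact of the write-up. One can build finite \emph{thin} Reedy categories (so every arrow is trivially an epimorphism), with $\mathcal{C}$ having fibrant constants, in which the comma category is a disconnected two-object full subcategory of a connected three-object matching category; pushing the example through, $f^{\ast}$ fails to preserve Reedy cofibrations (take ${\bf M}={\bf S}$ and a diagram of points that is empty at the object of $\mathcal{D}$ joining the two branches of $\mathcal{C}$'s matching category but a point at the extra joining object present only in $\mathcal{D}$). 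What makes Barwick's Theorem 3.51 work is structure special to $\Delta$ and its products: two surjections out of $[n]$ admit a join (an absolute pushout), so his comma categories have terminal objects and are connected for that reason. That extra input, of elegant/EZ type, is what your sketch is missing --- and, it should be said, the paper's own three-line proof defers to Barwick at exactly this spot, so the weakness you have honestly flagged is also a weakness of the lemma as stated, not only of your argument.
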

\begin{proof}
It suffices to prove that $\partial(\alpha\downarrow 
\overleftarrow{(\overleftarrow{f}\downarrow d)})$
is empty or connected. Let $\alpha=(c,f(c)\rightarrow d)$.
Since $\partial(c\downarrow \overleftarrow{\mathcal{C}})$
is empty or connected, it all reduces to proving,
as in \cite[Proof of Theorem 3.51]{Ba},  
that the lower triangle in a diagram
\[
   \xymatrix{
& f(c) \ar[dr] \ar[dl] \ar[dd]\\
f(x) \ar @{..>} [rr] \ar[dr] & & f(y) \ar[dl]\\
& d\\
}
  \]
commutes. This is where the assumption on
$\overleftarrow{\mathcal{D}}$ comes in.
\end{proof}
\begin{corollary}
For any map $f:X\rightarrow Y$ is of simplicial sets and any 
model category {\bf M}, the adjoint pair $$(y\downarrow f)^{\ast}:
{\bf M}^{(y\downarrow Y)^{op}}\rightleftarrows
{\bf M}^{(y\downarrow X)^{op}}:(y\downarrow f)_{\ast}$$
is a Quillen pair.
\end{corollary}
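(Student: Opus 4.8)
The plan is to recognize that the stated Quillen pair condition is, by the definition recalled just before Lemma 5.7, exactly the assertion that the morphism of Reedy categories $(y\downarrow f):(y\downarrow X)\rightarrow(y\downarrow Y)$ is a \emph{left fibration}. Hence the whole statement will follow from Lemma 5.7, provided I verify its two hypotheses for $\mathcal{C}=(y\downarrow X)$ and $\mathcal{D}=(y\downarrow Y)$: first, that $(y\downarrow X)$ has fibrant constants, and second, that every arrow of the inverse subcategory $\overleftarrow{(y\downarrow Y)}$ is an epimorphism. I would begin by recalling the Reedy structure of example 2.4: an object of $(y\downarrow X)$ is a pair $([n],x)$ with $x\in X_{n}$, its degree is $n$, and a morphism $([n],x)\rightarrow([m],x')$ is a map $\phi:[n]\rightarrow[m]$ in $\Delta$ with $X(\phi)(x')=x$; such a morphism lies in the direct subcategory when $\phi$ is injective and in the inverse subcategory when $\phi$ is surjective. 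I would also note in passing that $(y\downarrow f)$, sending $([n],x)$ to $([n],f(x))$ and acting as the identity on underlying $\Delta$-maps, is a degree-preserving morphism of Reedy categories.

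For the epimorphism hypothesis, I would observe that an arrow of $\overleftarrow{(y\downarrow Y)}$ is a morphism whose underlying map in $\Delta$ is a surjection, and that surjections are epimorphisms in $\Delta$. Since a morphism of the comma category is completely determined by its underlying $\Delta$-map once source and target are fixed, two parallel morphisms out of the target of such an arrow agree as soon as their underlying $\Delta$-maps agree; precomposition with a surjection is therefore injective on morphisms, so every arrow of $\overleftarrow{(y\downarrow Y)}$ is an epimorphism.

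For the fibrant-constants hypothesis I must show, as in the proof of Lemma 5.7, that the matching category $\partial(([n],x)\downarrow\overleftarrow{(y\downarrow X)})$ is empty or connected for every object $([n],x)$. The key point is that, because a surjection $\phi:[n]\rightarrow[m]$ is a split epimorphism, $X(\phi)$ is a split monomorphism, so for each surjection $\phi$ the element $x'$ with $X(\phi)(x')=x$ is uniquely determined; consequently the objects of this matching category correspond bijectively to the non-identity surjections out of $[n]$, exactly as in the matching category of $\Delta$ at $[n]$. When $n=0$ there are no such surjections and the category is empty. When $n\geq 1$, the unique surjection $[n]\rightarrow[0]$ lifts to a morphism $([n],x)\rightarrow([0],x'')$ with $x''$ again forced, and this is a \emph{terminal} object of the matching category: every surjection out of $([n],x)$ composes with a unique further surjection onto $([0],x'')$, the uniqueness coming from the uniqueness of surjections onto $[0]$ in $\Delta$. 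A category with a terminal object is connected, so $(y\downarrow X)$ has fibrant constants and Lemma 5.7 applies.

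I expect the main obstacle to be bookkeeping rather than anything deep: one has to be careful that the Reedy inverse subcategory of the comma category really consists of the degeneracy-type morphisms, and that the forced target elements $x'$ make the matching category at $([n],x)$ genuinely isomorphic to the matching category of $\Delta$ at $[n]$, so that the terminal-object argument transports cleanly. Once these identifications are in place the result is immediate.
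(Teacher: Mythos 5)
Your strategy coincides with the paper's intended one: Corollary 5.8 is placed immediately after Lemma 5.7 precisely so that it follows by applying that lemma to $(y\downarrow f):(y\downarrow X)\rightarrow (y\downarrow Y)$, and your verification of the epimorphism hypothesis is correct, since a morphism of $(y\downarrow Y)$ is determined by its underlying map in $\Delta$ and surjections are epimorphisms in $\Delta$. The gap is in your fibrant-constants check. Uniqueness of the lift $x'$ (which you correctly deduce from $X(\phi)$ being a split monomorphism) does not give existence: a surjection $\phi:[n]\rightarrow[m]$ underlies a morphism out of $([n],x)$ only when $x$ lies in the image of $X(\phi)$, i.e.\ when $x$ is degenerate along $\phi$. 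So the matching category $\partial(([n],x)\downarrow \overleftarrow{(y\downarrow X)})$ is \emph{not} isomorphic to the matching category of $\Delta$ at $[n]$: for $x$ nondegenerate of degree $n\geq 1$ it is empty (while the one in $\Delta$ is not), and your claimed terminal object lying over $[n]\rightarrow[0]$ exists only when $x$ is \emph{totally} degenerate. Concretely, if $x=s_{0}z$ with $z$ a nondegenerate $1$-simplex, the matching category at $([2],x)$ consists of the single object $(s_{0},z)$, and there is no lift of the surjection $[2]\rightarrow[0]$; your argument breaks down exactly at the sentence asserting that this lift exists.

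The fact you need is nevertheless true, and the repair is the Eilenberg--Zilber lemma: every $x\in X_{n}$ equals $X(\sigma)(y)$ for a unique surjection $\sigma:[n]\rightarrow[m]$ and a unique nondegenerate $y\in X_{m}$. If $x$ is nondegenerate (in particular if $n=0$), the matching category at $([n],x)$ is empty; if $x$ is degenerate, then $(\sigma,y)$ is an object of it and is terminal: given any object $(\phi,x')$, write $x'=X(\tau)(y')$ by Eilenberg--Zilber, so $x=X(\tau\phi)(y')$, and uniqueness forces $\tau\phi=\sigma$ and $y'=y$; thus $\tau$ is a morphism $(\phi,x')\rightarrow(\sigma,y)$ in the matching category, and it is the only one because $\phi$ is an epimorphism of $\Delta$. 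Hence each matching category is empty or connected, so $(y\downarrow X)$ has fibrant constants (a standard property of the category of simplices; compare the treatment of fibrant constants in \cite[Chapter 15]{Hi}), and Lemma 5.7 then applies exactly as you intended.
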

\begin{lem}
Let {\bf M} be a left proper, combinatorial model category. 
If $(u,f):(S,{\bf X})\rightarrow (T,{\bf Y})$ is a Reedy 
cofibration, then $f:{\bf X}\rightarrow u^{\ast}{\bf Y}$ 
is a cofibration in ${\bf M}^{\Delta^{op}S}_{\ast,S}$.
\end{lem}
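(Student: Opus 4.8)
The plan is to reduce everything to the underlying Reedy structures and then to match relative latching maps across the change-of-diagram functor induced by $u$. First, since ${\bf M}^{\Delta^{op}S}_{\ast,S}$ and ${\bf M}^{\Delta^{op}T}_{\ast,S}$ are left Bousfield localizations of ${\bf M}^{\Delta^{op}S}_{\ast}$ and ${\bf M}^{\Delta^{op}T}_{\ast}$ (theorem 2.3), they have the same cofibrations as the latter; so it suffices to show that $f:{\bf X}\to u^{\ast}{\bf Y}$ is a cofibration in ${\bf M}^{\Delta^{op}S}_{\ast}$, knowing that $u$ is injective and that $u_{!}{\bf X}\to {\bf Y}$ is a cofibration in ${\bf M}^{\Delta^{op}T}_{\ast}$. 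By proposition 2.1 the cofibrations of these pointed Reedy structures are those of the underlying $\Delta^{op}$-diagrams, hence are detected by the relative latching maps \cite{Hi}. Since $u$ is injective I would take $S\subseteq T$ with $u$ the inclusion; then $\bar{u}=(y\downarrow N\iota u):\Delta S\to \Delta T$ (example 2.4) is a degree-preserving full embedding, identifying $\Delta S$ with the full subcategory of $\Delta T$ on the objects $([n],t_{0},\dots,t_{n})$ all of whose entries lie in $S$.

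The key point will be that this full subcategory is closed under the morphisms that build latching objects. Concretely, the latching object of a presheaf on $\Delta T$ at an object $([n],t_{0},\dots,t_{n})$ is a colimit of its values over the inverse (degree-lowering) morphisms out of that object, that is, over morphisms lying above surjections $\phi:[n]\twoheadrightarrow [m]$; such a morphism has target $([m],t''_{0},\dots,t''_{m})$ whose entries satisfy $t''_{\phi(i)}=t_{i}$, so that $\{t''_{0},\dots,t''_{m}\}\subseteq\{t_{0},\dots,t_{n}\}$. Hence if all $t_{i}\in S$ then all $t''_{j}\in S$, i.e. the target again lies in $\bar{u}(\Delta S)$. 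Combined with the fullness of $\bar{u}$, this would show that $\bar{u}$ induces an isomorphism from the latching category of $d$ in $\Delta S$ onto that of $\bar{u}(d)$ in $\Delta T$. I would then obtain, for every object $d$ of $\Delta S$, canonical identifications $L_{\bar{u}(d)}{\bf Y}\cong L_{d}(u^{\ast}{\bf Y})$; moreover, using the explicit formula for $u_{!}$ of section 4.1, which gives $(u_{!}{\bf X})(\bar{u}(d'))={\bf X}(d')$ whenever all entries lie in $S$, one gets $(u_{!}{\bf X})(\bar{u}(d))\cong {\bf X}(d)$ and $L_{\bar{u}(d)}(u_{!}{\bf X})\cong L_{d}{\bf X}$ as well.

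Under these identifications the relative latching map of $u_{!}{\bf X}\to {\bf Y}$ at $\bar{u}(d)$ becomes precisely the relative latching map of $f:{\bf X}\to u^{\ast}{\bf Y}$ at $d$. Since $u_{!}{\bf X}\to {\bf Y}$ is a Reedy cofibration, all of its relative latching maps are cofibrations of {\bf M}, in particular those at the objects $\bar{u}(d)$; hence every relative latching map of $f$ is a cofibration of {\bf M}, which is exactly the statement that $f$ is a cofibration in ${\bf M}^{\Delta^{op}S}_{\ast}$. I expect the main obstacle to be the verification that $\bar{u}$ identifies the two latching categories, i.e. that no object outside $\bar{u}(\Delta S)$ can contribute to the latching object of $\bar{u}(d)$; this is exactly where the injectivity of $u$ and the combinatorics of $\Delta T=(y\downarrow N\iota T)$ are used. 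As an alternative to the latching computation, one could instead observe that the unit ${\bf X}\to u^{\ast}u_{!}{\bf X}$ is an isomorphism for injective $u$, so that $f\cong u^{\ast}(u_{!}{\bf X}\to {\bf Y})$, and then invoke the corollary preceding this lemma to the effect that $u^{\ast}=(y\downarrow N\iota u)^{\ast}$ is left Quillen and hence preserves cofibrations; the price of that route is the bookkeeping needed to pass between the pointed and unpointed versions of $u_{!}$ and $u^{\ast}$.
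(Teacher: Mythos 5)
Your main argument is correct, but it follows a genuinely different route from the paper, and, amusingly, the ``alternative'' you sketch in your last sentences is precisely the paper's actual proof. The paper argues in two lines: since $u$ is a monomorphism the unit ${\bf X}\rightarrow u^{\ast}u_{!}{\bf X}$ is an isomorphism, so $f\cong u^{\ast}f_{u}$ where $f_{u}:u_{!}{\bf X}\rightarrow {\bf Y}$; and $u^{\ast}=(y\downarrow N\iota u)^{\ast}$ preserves cofibrations because it is left Quillen by corollary 5.8 (itself resting on Barwick's left-fibration criterion, lemma 5.7), transported to the pointed categories via 1.2$(f)$. Your primary route instead unwinds, by hand, what that corollary does in the special case at hand: injectivity of $u$ makes $\bar{u}:\Delta S\rightarrow \Delta T$ a full embedding whose image is closed under the degree-lowering (surjection-covering) morphisms out of its objects, so the latching categories of $d$ and $\bar{u}(d)$ agree, and together with the explicit formula for $u_{!}$ this identifies the relative latching maps of $f_{u}$ at objects $\bar{u}(d)$ with those of $f$ at $d$. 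This is sound, and its chief merit is that it is self-contained: it needs neither lemma 5.7 nor corollary 5.8, only the combinatorics of $\Delta T$ and Hirschhorn's latching-map characterization of Reedy cofibrations. What it costs is length and some naturality bookkeeping for the identifications of latching objects, and it exploits injectivity of $u$ in an essential way, whereas corollary 5.8 applies to arbitrary maps of simplicial sets; the paper's proof is the more economical reuse of infrastructure it had already built, at the price of invoking that heavier general statement.
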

\begin{proof}
Let $f_{u}$ be the map $u_{!}{\bf X}\rightarrow {\bf Y}$.
By definition, $f_{u}$ is a cofibration in 
${\bf M}^{\Delta^{op}T}_{\ast}$, hence by corollary 5.8
suitably applied (see 1.2$(f)$), $u^{\ast}f_{u}$ is a 
cofibration in ${\bf M}^{\Delta^{op}S}_{\ast}$.
But $u^{\ast}f_{u}\cong f$ since $u$ is a monomorphism.
\end{proof}
\begin{lem}
Let {\bf M} be a left proper, combinatorial model category. Then fibred weak 
equivalences are stable under pushout along Reedy cofibrations.
\end{lem}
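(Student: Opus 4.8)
The plan is to compute the pushout in the bifibration $Ob\colon {\bf M}^{\Delta^{op}Set}_{\ast}\to Set$ and then appeal to left properness of the fibres. Write the given fibred weak equivalence as $(u,f)\colon (S,{\bf X})\to (S',{\bf X}')$, so that $u$ is bijective and $f\colon {\bf X}\to u^{\ast}{\bf X}'$ is a weak equivalence in ${\bf M}^{\Delta^{op}S}_{\ast,S}$, and the Reedy cofibration as $(v,h)\colon (S,{\bf X})\to (T,{\bf Y})$, so that $v$ is injective and the transpose $\bar h\colon v_{!}{\bf X}\to {\bf Y}$ is a cofibration in ${\bf M}^{\Delta^{op}T}_{\ast,S}$. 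Applying $Ob$ to the pushout square produces the pushout of $u$ along $v$ in $Set$; as $u$ is a bijection, its pushout $j_{2}\colon T\to T'$ is again a bijection, and the other leg is the injection $w:=vu^{-1}\colon S'\to T'\cong T$. Thus the object-map of the arrow to be examined is bijective, as a fibred weak equivalence requires. Computing the pushout fibrewise over $T$ (the usual formula for colimits in an opfibration) identifies its fibre component with the pushout ${\bf Y}\to {\bf Y}'={\bf Y}\cup_{v_{!}{\bf X}}w_{!}{\bf X}'$ in ${\bf M}^{\Delta^{op}T}_{\ast}$, whose two legs are $\bar h$ and the map $w_{!}(\bar f)\colon w_{!}u_{!}{\bf X}=v_{!}{\bf X}\to w_{!}{\bf X}'$, where $\bar f\colon u_{!}{\bf X}\to {\bf X}'$ is the transpose of $f$ and $w_{!}$ denotes the fibrewise pushforward, left Quillen between the Segal structures by corollary 2.9.

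Since ${\bf M}^{\Delta^{op}T}_{\ast,S}$ is left proper (theorem 2.3) and $\bar h$ is a cofibration there, it suffices to prove that $w_{!}(\bar f)$ is a weak equivalence in ${\bf M}^{\Delta^{op}T}_{\ast,S}$: left properness then forces its pushout ${\bf Y}\to {\bf Y}'$ to be a weak equivalence, and hence the bottom arrow to be a fibred weak equivalence. Note that, $u$ being bijective, $u_{!}$ is an isomorphism of categories, so $f$ is a weak equivalence if and only if $\bar f\colon u_{!}{\bf X}\to {\bf X}'$ is a weak equivalence in ${\bf M}^{\Delta^{op}S'}_{\ast,S}$.

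The crux, and the step I expect to be the main obstacle, is therefore to show that $w_{!}\colon {\bf M}^{\Delta^{op}S'}_{\ast,S}\to {\bf M}^{\Delta^{op}T}_{\ast,S}$ preserves \emph{all} weak equivalences, not merely those between cofibrant objects supplied by corollary 2.9 via Ken Brown. I would extract this from two special features of the injection $w$. First, the explicit description of $w_{!}$ recalled just before lemma 4.3 shows that $w_{!}{\bf Z}$ is computed objectwise by ${\bf Z}$ on simplices landing in $w(S')$, by $\ast$ on the diagonal simplices of $T-w(S')$, and by $\emptyset$ otherwise; hence $w_{!}$ preserves objectwise, and so Reedy, weak equivalences. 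Second, writing $w_{!}=r(\Delta w)_{!}K$ with $\Delta w\colon \Delta S'\to \Delta T$ a degree preserving fibred morphism, one has $w_{!}(rF^{A}_{c})\cong rF^{A}_{\Delta w(c)}$, and $\Delta w$ commutes with the cartesian liftings $(\alpha^{k})^{\ast}$; being a left adjoint, $w_{!}$ therefore sends each generating map of $\mathfrak{S}$ for $S'$ to the corresponding generating map of $\mathfrak{S}$ for $T$, in particular into the $\mathfrak{S}$-local equivalences.

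With these two facts in hand the argument closes by a standard property of left Bousfield localization (the results recalled in the appendix): a functor that preserves the underlying Reedy weak equivalences and carries the localizing set $\mathfrak{S}$ into $\mathfrak{S}$-local equivalences descends to the localized homotopy categories, and since $\mathrm{Ho}({\bf M}^{\Delta^{op}S'}_{\ast,S})$ is the localization of $\mathrm{Ho}({\bf M}^{\Delta^{op}S'}_{\ast})$ at $\mathfrak{S}$, it must invert every $\mathfrak{S}$-local equivalence. Applied to $\bar f$ this shows $w_{!}(\bar f)$ is a weak equivalence in ${\bf M}^{\Delta^{op}T}_{\ast,S}$, which by the second paragraph finishes the proof.
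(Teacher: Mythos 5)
Your geometric setup is sound and follows the same route as the paper: reduce the object part to a bijection, compute the pushout fibrewise as ${\bf Y}\cup_{v_{!}{\bf X}}w_{!}{\bf X}'$ in the fibre over $T$, and conclude by left properness of the fibre model structure. The paper, however, makes one preliminary move that you do not: since $f{\bf M}^{\Delta^{op}Set}_{\ast,S}$ is a left Bousfield localization of $f{\bf M}^{\Delta^{op}Set}_{\ast}$, it first reduces the claim to weak equivalences of $f{\bf M}^{\Delta^{op}Set}_{\ast}$, i.e.\ to \emph{Reedy} (objectwise) equivalences in the fibre; after that, the explicit description of $u_{!}$ for injective $u$ immediately shows $u_{!}f$ is an objectwise equivalence, and left properness of ${\bf M}^{\Delta^{op}T}_{\ast}$ finishes. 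By staying at the Segal level throughout, you are forced into exactly the question you flag as the crux: that $w_{!}$ preserves \emph{all} weak equivalences of ${\bf M}^{\Delta^{op}S'}_{\ast,S}$. Your two auxiliary facts (preservation of objectwise equivalences via the explicit description, and $w_{!}(rF^{A}_{c})\cong rF^{A}_{\Delta w(c)}$ so that $w_{!}$ carries $\mathfrak{S}$ into $\mathfrak{S}$) are both correct.

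The gap is in how you close the crux. The statement that $\mathrm{Ho}({\bf M}^{\Delta^{op}S'}_{\ast,S})$ is the $1$-categorical localization of $\mathrm{Ho}({\bf M}^{\Delta^{op}S'}_{\ast})$ at $\mathfrak{S}$ is not among the results recalled in the appendix (proposition 7.1 and lemma 7.2 say something quite different), nor is it a formal property of left Bousfield localization as set up in \cite{Hi}. What is formal is only that a functor inverting the Reedy equivalences and $\mathfrak{S}$ factors through $\mathrm{Ho}({\bf M}^{\Delta^{op}S'}_{\ast})[\mathfrak{S}^{-1}]$; what you need is that the canonical comparison $\mathrm{Ho}({\bf M}^{\Delta^{op}S'}_{\ast})[\mathfrak{S}^{-1}]\rightarrow \mathrm{Ho}({\bf M}^{\Delta^{op}S'}_{\ast,S})$ inverts no more maps on the left than on the right, i.e.\ that every $\mathfrak{S}$-local equivalence already becomes invertible after formally inverting the set $\mathfrak{S}$. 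The $\mathfrak{S}$-local equivalences are the closure of $\mathfrak{S}$ and the Reedy equivalences under operations (pushout along cofibrations, transfinite composition, two-out-of-three, retracts) that a functor merely inverting $\mathfrak{S}$ has no reason to respect, so this is a genuine theorem (essentially that left Bousfield localization presents the Dwyer--Kan localization), true but far heavier than the lemma being proved and unavailable in the paper's toolkit. Fortunately the crux can be closed with tools you already cite: by corollary 2.9, $w_{!}'$ is left Quillen from ${\bf M}^{\Delta^{op}S'}_{\ast,S}$ to ${\bf M}^{\Delta^{op}T}_{\ast,S}$. Given a weak equivalence $g:{\bf Z}\rightarrow {\bf Z}'$ there, choose cofibrant replacements $p:\tilde{{\bf Z}}\rightarrow {\bf Z}$, $q:\tilde{{\bf Z}'}\rightarrow {\bf Z}'$ (Reedy trivial fibrations) and a lift $\tilde{g}$ with $q\tilde{g}=gp$; then $\tilde{g}$ is a weak equivalence between cofibrant objects by two-out-of-three, so $w_{!}\tilde{g}$ is a weak equivalence by Ken Brown's lemma, while $w_{!}p$ and $w_{!}q$ are objectwise, hence $\mathfrak{S}$-local, equivalences by your first fact; two-out-of-three then gives that $w_{!}g$ is a weak equivalence in ${\bf M}^{\Delta^{op}T}_{\ast,S}$. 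With that substitution your proof is complete.
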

\begin{proof}
Since $f{\bf M}^{\Delta^{op}Set}_{\ast,S}$ is a left Bousfield 
localization of $f{\bf M}^{\Delta^{op}Set}_{\ast}$, it suffices to 
prove the lemma for the weak equivalences of 
$f{\bf M}^{\Delta^{op}Set}_{\ast}$. So let 
$(v,f):(R,{\bf X})\rightarrow (S,{\bf Y})$ be a weak equivalence
of $f{\bf M}^{\Delta^{op}Set}_{\ast}$ and 
$(u,g):(R,{\bf X})\rightarrow (T,{\bf Z})$ a Reedy cofibration.
Without loss of generality we may assume that $v$ is the
identity map of $S$. The pushout of $(1_{S},f)$ along $(u,g)$
is calculated as the pushout
\[
   \xymatrix{
u_{!}{\bf X} \ar[r]^{u_{!}f} \ar[d] & u_{!}{\bf Y} \ar[d]\\
{\bf Z} \ar[r] & {\bf P}\\
}
  \]
in ${\bf M}^{\Delta^{op}T}_{\ast}$. From the
explicit description of $u_{!}{\bf X}$ and $u_{!}{\bf Y}$
the map $u_{!}f$ is a weak equivalence in 
${\bf M}^{\Delta^{op}T}_{\ast}$, and the lemma
follows since ${\bf M}^{\Delta^{op}T}_{\ast}$
is left proper.
\end{proof}

\section{Fibred projective model structures on 
pre-{\bf M}-categories}

For the next result, recall from example 2.4 the 
category $\Delta S$ and from proposition 2.11 the 
model category ${\bf M}^{\Delta^{op}S}_{\ast,p}$.

\begin{theorem}
Let {\bf M} be a cofibrantly generated model category.
The category ${\bf M}^{\Delta^{op}Set}_{\ast}$
admits a model category structure, denoted by
$f{\bf M}^{\Delta^{op}Set}_{\ast,p}$, in which a map
 $(u,f):(S,{\bf X})\rightarrow (T,{\bf Y})$  is a 

$\bullet$ weak equivalence if $u$ is bijective and 
$f:{\bf X}\rightarrow u^{\ast}{\bf Y}$ is a weak equivalence
in ${\bf M}^{\Delta^{op}S}_{\ast,p}$,

$\bullet$ cofibration if $u_{!}{\bf X}\rightarrow {\bf Y}$ 
is a cofibration in ${\bf M}^{\Delta^{op}T}_{\ast,p}$,

$\bullet$ fibration if $f:{\bf X}\rightarrow u^{\ast}{\bf Y}$
is a fibration in ${\bf M}^{\Delta^{op}S}_{\ast,p}$.
\end{theorem}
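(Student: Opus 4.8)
The plan is to run the proof of theorem 5.1 again, with the projective fibre structures of proposition 2.11 in place of the Reedy ones. First I would recall from section 4.1 that $Ob:{\bf M}^{\Delta^{op}Set}_{\ast}\rightarrow Set$ is a bifibration whose fibre over a set $S$ is ${\bf M}^{\Delta^{op}S}_{\ast}$. I give $Set$ the minimal model structure (isomorphisms as weak equivalences, every map both a cofibration and a fibration) and, for each set $S$, I give the fibre ${\bf M}^{\Delta^{op}S}_{\ast}$ the projective model structure ${\bf M}^{\Delta^{op}S}_{\ast,p}$ of proposition 2.11; this is the only point at which the hypothesis that {\bf M} be cofibrantly generated is used. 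I then apply \cite[2.3, Theorem]{St}: its conclusion is a model structure on the total category ${\bf M}^{\Delta^{op}Set}_{\ast}$ whose weak equivalences, cofibrations and fibrations are described fibrewise exactly as in the statement.

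To invoke that theorem I must check its compatibility hypotheses, just as corollary 2.8 was used to check them in theorem 5.1. The hypothesis concerning weak equivalences of the base is vacuous here, since the weak equivalences of $Set$ are the bijections, and for a bijection $u$ the adjunction $(u_{!},u^{\ast})$ is an adjoint equivalence and hence trivially a Quillen equivalence. The remaining hypothesis is that for every function $u:S\rightarrow T$ the adjoint pair $u_{!}:{\bf M}^{\Delta^{op}S}_{\ast,p}\rightleftarrows {\bf M}^{\Delta^{op}T}_{\ast,p}:u^{\ast}$ (which exists by 1.2$(e)$) is a Quillen pair; this is the projective analogue of corollary 2.8.

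This last point is the crux of the argument, but it is in fact easier than in the Reedy case, and I do not expect a genuine obstacle. Indeed, the projective fibre structure is transferred along $(r,K)$ (proposition 2.11), so a map of ${\bf M}^{\Delta^{op}S}_{\ast,p}$ is a fibration, respectively a weak equivalence, precisely when its image under $K$ is an objectwise fibration, respectively an objectwise weak equivalence, in ${\bf M}^{\Delta^{op}S}$ with the projective model structure. The functor $u^{\ast}$ is reindexing along the induced functor $\Delta S\rightarrow \Delta T$; it carries starred objects to starred objects and satisfies $Ku^{\ast}=u^{\ast}K$, while $(u^{\ast}g)(c)=g(u(c))$ for every object $c$ of $\Delta S$. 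Hence $u^{\ast}$ preserves objectwise fibrations and objectwise weak equivalences, and therefore preserves fibrations and trivial fibrations of the starred projective structures, so that $u^{\ast}$ is right Quillen and $u_{!}$ is left Quillen. Unlike in theorem 5.1, no appeal to the right-fibration machinery (lemma 2.7, corollary 2.8) is needed---objectwise detection of the fibrations and weak equivalences does all the work---and the only care required is the routine verification that $u^{\ast}$ respects the starred condition and commutes with $K$.
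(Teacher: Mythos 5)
Your proposal takes essentially the same route as the paper, which likewise proves the theorem by rerunning the argument of theorem 5.1 for the bifibration $Ob:{\bf M}^{\Delta^{op}Set}_{\ast}\rightarrow Set$ (with the minimal model structure on $Set$ and the projective fibre structures of proposition 2.11), the sole new input being that $u_{!}':{\bf M}^{\Delta^{op}S}_{\ast,p}\rightleftarrows {\bf M}^{\Delta^{op}T}_{\ast,p}:u^{\ast}$ is a Quillen pair for every function $u:S\rightarrow T$. Your objectwise justification of that Quillen-pair fact --- which the paper asserts without proof --- is correct, since the transferred structure of proposition 2.11 detects fibrations and weak equivalences through $K$, and you rightly note that this makes the projective case easier than the Reedy case of corollary 2.8.
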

\begin{proof}
The proof is the same as for theorem 5.1, using the fact 
that if $f:S\rightarrow T$ is a function, the adjoint pair 
$f_{!}':{\bf M}^{\Delta^{op}S}_{\ast,p}
\rightleftarrows {\bf M}^{\Delta^{op}T}_{\ast,p}:f^{\ast}$
is a Quillen pair.
\end{proof}
\begin{definition}
Let {\bf M} be a left proper, combinatorial model category. 
A map $(u,f):(S,{\bf X})\rightarrow (T,{\bf Y})$ 
of ${\bf M}^{\Delta^{op}Set}_{\ast}$ is a 

$(a)$  {\bf fibred projective weak equivalence} if $u$ is bijective and 
$f:{\bf X}\rightarrow u^{\ast}{\bf Y}$ is a weak equivalence
in ${\bf M}^{\Delta^{op}S}_{\ast,p,S}$;

$(b)$ {\bf fibred projective cofibration} if $u_{!}{\bf X}\rightarrow {\bf Y}$ 
is a cofibration in ${\bf M}^{\Delta^{op}T}_{\ast,p,S}$;

$(c)$ {\bf fibred projective fibration} if $f:{\bf X}\rightarrow u^{\ast}{\bf Y}$
is a fibration in ${\bf M}^{\Delta^{op}S}_{\ast,p,S}$.
\end{definition}
A map which is both a fibred projective weak equivalence and 
fibred projective cofibration is called isotrivial cofibration in \cite[14.3]{Si}.
\begin{theorem}
Let {\bf M} be a left proper, combinatorial model category. The classes of
fibred projective weak equivalences, fibred projective cofibrations and
fibred projective fibrations form a model category structure on 
${\bf M}^{\Delta^{op}Set}_{\ast}$. We denote this model 
structure by $f{\bf M}^{\Delta^{op}Set}_{\ast,p,S}$.
$f{\bf M}^{\Delta^{op}Set}_{\ast,p,S}$ is a left Bousfield 
localization of $f{\bf M}^{\Delta^{op}Set}_{\ast,p}$.
\end{theorem}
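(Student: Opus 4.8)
The plan is to mimic exactly the strategy used for Theorem 5.4, which is the Reedy analogue of the present statement. The underlying category is the same, ${\bf M}^{\Delta^{op}Set}_{\ast}$, equipped with the bifibration $Ob:{\bf M}^{\Delta^{op}Set}_{\ast}\rightarrow Set$ established in section 4.1; only the fibrewise model structures have changed, from ${\bf M}^{\Delta^{op}S}_{\ast,S}$ to ${\bf M}^{\Delta^{op}S}_{\ast,p,S}$. First I would put on $Set$ the minimal model structure (weak equivalences = isomorphisms, every map both a cofibration and a fibration), and on each fibre ${\bf M}^{\Delta^{op}S}_{\ast}$ the Segal projective model structure ${\bf M}^{\Delta^{op}S}_{\ast,p,S}$ of Theorem 2.13. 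The existence of the model structure $f{\bf M}^{\Delta^{op}Set}_{\ast,p,S}$ will then follow from the transfer/assembly theorem \cite[2.3, Theorem]{St} for model structures on a bifibration, precisely as in the proofs of Theorems 5.1 and 5.4.

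The one genuine hypothesis that the assembly theorem requires, and the step I expect to be the crux, is that the base-change adjunctions between fibres are Quillen pairs: for every function $f:S\rightarrow T$ the adjoint pair $f_{!}':{\bf M}^{\Delta^{op}S}_{\ast,p,S}\rightleftarrows {\bf M}^{\Delta^{op}T}_{\ast,p,S}:f^{\ast}$ must be Quillen. The Reedy case invoked Corollary 2.9 for this; here I need the projective-Segal analogue. Since ${\bf M}^{\Delta^{op}S}_{\ast,p,S}$ is by Theorem 2.13 the Bousfield localization of ${\bf M}^{\Delta^{op}S}_{\ast,p}$ at the same set $\mathfrak{S}$, and $f_{!}':{\bf M}^{\Delta^{op}S}_{\ast,p}\rightleftarrows {\bf M}^{\Delta^{op}T}_{\ast,p}:f^{\ast}$ is already Quillen (the fact used in the proof of Theorem 6.1), I would obtain the localized statement by \cite[8.5.4(3)]{Hi}: it suffices to check that $f^{\ast}$ carries fibrations between fibrant objects to fibrations. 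By the fibrancy characterization in Theorem 2.13 this reduces to the fact, guaranteed by $f$ being a fibred degree-preserving morphism of Reedy categories, that $f^{\ast}$ respects the Segal maps. This is the projective mirror of Proposition 2.6 and Corollary 2.9, and I expect it to go through verbatim.

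With the Quillen-pair hypothesis in hand, the remaining verifications of \cite[2.3, Theorem]{St} are formal and identical to the Reedy case: one reads off that a map $(u,f)$ is a fibred projective weak equivalence, cofibration, or fibration exactly when it meets the conditions of Definition 6.2, matching the base and fibre data. Finally, to see that $f{\bf M}^{\Delta^{op}Set}_{\ast,p,S}$ is a left Bousfield localization of $f{\bf M}^{\Delta^{op}Set}_{\ast,p}$, I would observe that the two structures share the same cofibrations (each is detected by $u_{!}{\bf X}\rightarrow {\bf Y}$ being a cofibration in the corresponding fibre, and ${\bf M}^{\Delta^{op}T}_{\ast,p,S}$ has the same cofibrations as ${\bf M}^{\Delta^{op}T}_{\ast,p}$ by Theorem 2.13) while the weak equivalences of the localized structure contain those of the unlocalized one fibrewise, again by Theorem 2.13. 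The identity functor is therefore the required left Quillen functor, so the proof concludes with the single line that the rest is clear, exactly as in Theorem 5.4.
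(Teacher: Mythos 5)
Your proposal is correct and takes essentially the same route as the paper: the paper's own proof simply repeats the proof of Theorem 5.3 (the bifibration $Ob$, the minimal model structure on $Set$, the fibrewise structures ${\bf M}^{\Delta^{op}S}_{\ast,p,S}$, and the assembly theorem \cite[2.3, Theorem]{St}), citing as its only new input precisely the Quillen-pair fact $f_{!}':{\bf M}^{\Delta^{op}S}_{\ast,p,S}\rightleftarrows {\bf M}^{\Delta^{op}T}_{\ast,p,S}:f^{\ast}$ that you identify and, unlike the paper, actually justify via \cite[8.5.4(3)]{Hi}. The only blemishes are numbering slips: the Reedy analogue of the statement is Theorem 5.3 (not 5.4), and the Segal projective model structure on the fibres is Theorem 2.12 (not 2.13).
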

\begin{proof}
The proof is the same as for theorem 5.3, using the fact that
if $f:S\rightarrow T$ is a function, the adjoint pair 
$f_{!}':{\bf M}^{\Delta^{op}S}_{\ast,p,S}
\rightleftarrows {\bf M}^{\Delta^{op}T}_{\ast,p,S}:f^{\ast}$
is a Quillen pair.
\end{proof}
The next result was proved in the case when {\bf M} is the
category of simplicial sets by A. Joyal (unpublished).
It is a straightforward application of theorem 3.1.
\begin{theorem}
Let {\bf M} be a left proper, combinatorial cartesian
model category with cofibrant unit, having a set of 
generating cofibrations with cofibrant domains and
satisfying the monoid axiom of \cite{SS1}. Regard
{\bf M}\text{-}{\bf Cat} as having the fibred model structure
\cite[4.2]{St}. Then the adjoint pair 
$$L:{\bf M}^{\Delta^{op}Set}_{\ast,p,S}
\rightleftarrows {\bf M}\text{-}{\bf Cat}:N$$ is a Quillen 
equivalence.
\end{theorem}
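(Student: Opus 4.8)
The plan is to deduce the statement from its fixed-object-set counterpart, theorem 3.1, by exploiting that both $f{\bf M}^{\Delta^{op}Set}_{\ast,p,S}$ and ${\bf M}\text{-}{\bf Cat}$ are fibred over $Set$ via $Ob$ and that the adjunction $(L,N)$ respects this fibration. I recall that the fibred model structure on ${\bf M}\text{-}{\bf Cat}$ of \cite[4.2]{St} is, like the one of theorem 6.3, built by \cite[2.3, Theorem]{St} from the bifibration $Ob$, the minimal model structure on $Set$, and the fibrewise structures ${\bf M}\text{-}{\bf Cat}(S)$, respectively ${\bf M}^{\Delta^{op}S}_{\ast,p,S}$. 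First I would record the structural facts I need: the nerve $N$ is cartesian and satisfies $Ob\,N=Ob$, so by passing to left adjoints one obtains $Ob\,L=Ob$ together with a canonical isomorphism $L\,u_{!}\cong u_{!}\,L$ for every function $u:S\to T$ (the left adjoints of the two sides of the cartesianness isomorphism $N\,u^{\ast}\cong u^{\ast}\,N$). Fibrewise, $L$ and $N$ restrict to the adjunction $L:{\bf M}^{\Delta^{op}S}_{\ast,p,S}\rightleftarrows {\bf M}\text{-}{\bf Cat}(S):N$ of theorem 3.1, which is a Quillen equivalence for each $S$.

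Next I would check that $(L,N)$ is a Quillen pair for the fibred structures. For a fibred projective cofibration $(u,f):(S,{\bf X})\to(T,{\bf Y})$ the defining condition is that $u_{!}{\bf X}\to{\bf Y}$ be a cofibration in ${\bf M}^{\Delta^{op}T}_{\ast,p,S}$; since $Ob\,L=Ob$ and $u_{!}L{\bf X}\cong L\,u_{!}{\bf X}$, the map witnessing that $L(u,f)$ is a fibred cofibration is $L$ applied to $u_{!}{\bf X}\to{\bf Y}$, which is a cofibration because the fibrewise $L$ is left Quillen by theorem 3.1. A fibred trivial cofibration moreover has $u$ bijective, so up to the induced isomorphism it is a fibrewise trivial cofibration in ${\bf M}^{\Delta^{op}S}_{\ast,p,S}$, whose image under the fibrewise $L$ is again a fibrewise trivial cofibration. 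Hence $L$ preserves cofibrations and trivial cofibrations, and $(L,N)$ is a Quillen pair.

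To upgrade this to a Quillen equivalence I would verify the two standard conditions: that $N$ reflects weak equivalences between fibrant objects and that the derived unit is a weak equivalence on cofibrant objects. An object $(S,{\bf X})$ is cofibrant in $f{\bf M}^{\Delta^{op}Set}_{\ast,p,S}$ precisely when ${\bf X}$ is cofibrant in the fibre ${\bf M}^{\Delta^{op}S}_{\ast,p,S}$, and fibrant precisely when ${\bf X}$ is fibrant there; moreover any fibrant replacement may be chosen to be the identity on object sets, since fibred weak equivalences require a bijection of object sets. Thus for $(S,{\bf X})$ cofibrant one has $L(S,{\bf X})=(S,L{\bf X})$, a fibrant replacement takes the form $(S,\widehat{L{\bf X}})$, and the derived unit $(S,{\bf X})\to (S,N\widehat{L{\bf X}})$ is the map $(1_{S},\eta)$ determined by the fibrewise derived unit $\eta:{\bf X}\to N\widehat{L{\bf X}}$, a weak equivalence by theorem 3.1. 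Likewise, if $F:\mathcal{A}\to\mathcal{B}$ is a map of fibrant ${\bf M}$-categories with $NF$ a fibred weak equivalence, then $Ob\,F$ is a bijection and $NF$ is a fibrewise weak equivalence, so by theorem 3.1 (where $N$ reflects weak equivalences between fibrant objects) $F$ is a fibred weak equivalence.

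The routine parts are the fibrewise identification of (co)fibrant objects and the bookkeeping of the bifibration. The point that needs care — the main obstacle — is establishing $Ob\,L=Ob$ and the base-change compatibility $L\,u_{!}\cong u_{!}\,L$ cleanly from the cartesianness of $N$, since these are exactly what allow the fibred (co)fibration conditions to be tested fibrewise; once they are in hand, every remaining assertion is a fibrewise application of theorem 3.1 transported through the bifibration $Ob$.
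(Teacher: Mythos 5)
Your proposal is correct and is essentially the paper's own argument: the paper offers no details beyond the remark that the theorem ``is a straightforward application of theorem 3.1,'' and your write-up is exactly that application, transported through the bifibration $Ob$ using the cartesianness of $N$, the identity $Ob\,L=Ob$, the base-change isomorphism $L\,u_{!}\cong u_{!}L$, and the fibrewise detection of (trivial) cofibrations, fibrant/cofibrant objects and the derived unit. Nothing in your argument deviates from, or adds a gap to, what the paper intends.
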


\section{Appendix: Two facts about left Bousfield localization}

Let {\bf M} be a model category and let $C$ be a class of maps of {\bf M}.
We denote by $L_{C}{\bf M}$ the left Bousfield localization of {\bf M}
with respect to $C$ \cite[3.3.1(1)]{Hi}. Recall that {\bf S} denotes
the category of simplicial sets.
\begin{proposition}
A left proper, combinatorial monoidal model category 
with cofibrant unit and having a set of generating cofibrations 
with cofibrant domains is Quillen equivalent
to a monoidal simplicial model category via a strong monoidal
adjunction.
\end{proposition}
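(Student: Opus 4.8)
The plan is to produce an explicit chain of Quillen equivalences connecting the given model category $\mathbf{M}$ to a monoidal simplicial model category, taking care that every step is compatible with the monoidal structure. The standard tool is the simplicial enrichment obtained by framings (cosimplicial resolutions) in the sense of \cite[Chapter 16]{Hi}, but the naive framing construction need not respect the monoidal product. So first I would recall that by a theorem of Dugger \cite{Du}, a combinatorial model category is Quillen equivalent to a simplicial combinatorial model category; however, we need the \emph{monoidal} refinement, so I would instead invoke the monoidal version of this rectification, namely that a left proper combinatorial monoidal model category is Quillen equivalent, through a symmetric monoidal left Quillen functor, to a presentable symmetric monoidal simplicial model category. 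The hypotheses (cofibrant unit, generating cofibrations with cofibrant domains) are exactly the conditions needed to make the universal homotopy-theory construction of \cite{Du} monoidal.

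Concretely, I would proceed in two steps. First, realize $\mathbf{M}$ as a left Bousfield localization of a projective model structure on $\mathbf{S}^{\mathcal{D}^{op}}$ for a suitable small category $\mathcal{D}$, following Dugger's presentation theorem; the monoidal structure on $\mathbf{M}$ is transported via a Day convolution product on the presheaf category, using that the generating cofibrations have cofibrant domains to guarantee that the evaluation/realization adjunction is strong monoidal up to the localization. Second, observe that the Day convolution model structure on $\mathbf{S}^{\mathcal{D}^{op}}$, and hence its left Bousfield localization, is a simplicial monoidal model category, because $\mathbf{S}$ is the enriching base and the simplicial tensoring is compatible with Day convolution. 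The composite adjunction $\mathbf{S}^{\mathcal{D}^{op}} \rightleftarrows \mathbf{M}$ is then a strong monoidal Quillen equivalence, where the left adjoint is strong monoidal by construction of the Day product.

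The main obstacle is verifying that the monoidal structure survives the left Bousfield localization and that the realization functor is genuinely strong monoidal rather than merely lax. The cofibrant-unit hypothesis is what ensures the unit of $\mathbf{M}$ is correctly matched, and the cofibrant-domains hypothesis on generating cofibrations is what allows one to check the pushout-product axiom for the localized structure on generators alone, via \cite[Appendix]{Du} together with the general localization machinery of \cite[Chapter 3]{Hi}. I expect the delicate point to be confirming that the localizing set can be chosen so that the local model structure remains a \emph{monoidal} model category (i.e.\ that the pushout-product of a generating cofibration with a local-trivial cofibration is again local-trivial); this follows from left properness of $\mathbf{M}$ and the compatibility of Day convolution with the simplicial tensor, but it is the step requiring the most care. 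Once this is in place, the conclusion follows immediately, and this proposition is precisely what feeds into proving that $\mathbf{M}$-enriched structures can be compared with their simplicial analogues, as used in Section 3.
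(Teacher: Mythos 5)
There is a genuine gap, and it sits exactly where you flagged the ``delicate point.'' Your first step is circular: the ``monoidal version of Dugger's rectification'' that you propose to invoke is, up to cosmetic differences, the very proposition being proved, so it cannot be cited. Your second, more concrete plan --- present $\mathbf{M}$ as a localization of $\mathbf{S}^{\mathcal{D}^{op}}$ and transport the monoidal structure by Day convolution --- does not go through as described: Day convolution requires the index category $\mathcal{D}$ to carry a monoidal structure, and the small category produced by Dugger's presentation theorem (a category of cofibrant objects chosen for cardinality reasons) is not closed under $\otimes$ and carries no monoidal structure in general. Arranging for $\mathcal{D}$ to be monoidal, for the realization $\mathbf{S}^{\mathcal{D}^{op}} \to \mathbf{M}$ to be \emph{strong} (not merely lax) monoidal, and for the localizing set to be compatible with the Day product is a substantial theorem in its own right, established only later and in the \emph{symmetric} monoidal setting; note that the proposition here does not assume symmetry, so that machinery would not even apply. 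The hypotheses of cofibrant unit and cofibrant domains of generating cofibrations do not by themselves repair any of this.

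The paper's proof avoids presheaf presentations and Day convolution entirely, and this is the idea you are missing. One passes to simplicial objects $\mathbf{M}^{\Delta^{op}}$ with the Reedy model structure, where the levelwise tensor makes it a monoidal model category with cofibrant unit by \cite[Theorem 3.51]{Ba}, and localizes at the set $\mathfrak{S}=\{y([n])\to y([0])\}$ with $y([n])=\underset{\Delta(-,[n])}\sqcup I$. The resulting $L_{\mathfrak{S}}\mathbf{M}^{\Delta^{op}}$ is Dugger's model structure from \cite[Theorem 5.7]{Du}, hence simplicial, and the monoidality of the localized structure --- your hard step --- is obtained by identifying $L_{\mathfrak{S}}\mathbf{M}^{\Delta^{op}}$ with the $\mathbf{M}$-\emph{enriched} left Bousfield localization \cite[Definition 4.42 and Theorem 4.46]{Ba} and applying \cite[Proposition 4.47]{Ba}; this is where the cofibrant-unit and cofibrant-domains hypotheses are actually consumed. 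Finally $cst:\mathbf{M}\rightleftarrows L_{\mathfrak{S}}\mathbf{M}^{\Delta^{op}}:ev_{0}$ is a Quillen equivalence by \cite[Theorem 6.1]{Du}, and $cst$ is visibly strong monoidal since the tensor on simplicial objects is levelwise --- in contrast to your realization functor, whose strong monoidality was the unresolved crux of your argument.
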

\begin{proof}
Let {\bf M} be as in the hypothesis, with unit $I$. We regard 
${\bf M}^{\Delta^{op}}$ as having the Reedy model structure.
By \cite[Theorem 3.51]{Ba} ${\bf M}^{\Delta^{op}}$ is a 
monoidal model category with cofibrant unit. 
Let $y:\Delta\rightarrow {\bf M}^{\Delta^{op}}$ 
be the functor $y([n])=\underset{\Delta(-,[n])}\sqcup I$ and  
$\mathfrak{S}$ be the set of maps $\{y([n])\rightarrow 
y([0])\}_{[n]\in\Delta}$ of ${\bf M}^{\Delta^{op}}$. 
The model structure $L_{\mathfrak{S}}{\bf M}^{\Delta^{op}}$ 
is a particular case of the one obtained by D. Dugger 
\cite[Theorem 5.7]{Du}, and it is also
the left Bousfield localization of ${\bf M}^{\Delta^{op}}$
with respect to $\mathfrak{S}$ enriched over {\bf M} 
\cite[Definition 4.42 and Theorem 4.46]{Ba}.
By \cite[Proposition 4.47]{Ba} $L_{\mathfrak{S}}{\bf M}^{\Delta^{op}}$
is a monoidal model category. Among other things, Dugger proves 
that $L_{\mathfrak{S}}{\bf M}^{\Delta^{op}}$ 
is a simplicial model category. It follows that 
$L_{\mathfrak{S}}{\bf M}^{\Delta^{op}}$ is a monoidal 
simplicial model category (also known as monoidal {\bf S}-model 
category). By \cite[Theorem 6.1]{Du} $$cst:{\bf M}
\rightleftarrows L_{\mathfrak{S}}{\bf M}^{\Delta^{op}}:ev_{0}$$
is a Quillen equivalence, where $cst$ is the constant
simplicial object functor and $ev_{0}$ is the evaluation at $[0]$.
\end{proof}
\begin{lem} \cite[Proof of Theorem 5.7(a)]{Du}
Let {\bf M}$^{(1)}$ and {\bf M}$^{(2)}$ be two model category
structures on a category {\bf M} such that
the identity pair $Id:{\bf M}^{(1)}\rightleftarrows {\bf M}^{(2)}:Id$
is a Quillen pair and the weak equivalences of {\bf M}$^{(1)}$
and {\bf M}$^{(2)}$ are the same. Let $C$ be a class of maps
of {\bf M}. Then the weak equivalences of the left Bousfield
localizations of {\bf M}$^{(1)}$ and {\bf M}$^{(2)}$ with
respect to $C$ are the same.
\end{lem}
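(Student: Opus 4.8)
The plan is to prove the equality of weak equivalences by identifying directly the two classes of $C$-local equivalences, the one computed relative to ${\bf M}^{(1)}$ and the one computed relative to ${\bf M}^{(2)}$; by construction \cite[3.3.1]{Hi} these classes are exactly the weak equivalences of $L_{C}{\bf M}^{(1)}$ and $L_{C}{\bf M}^{(2)}$, so it suffices to show they coincide. Write $\mathcal{W}$ for the common class of weak equivalences. From the Quillen pair hypothesis the left adjoint $Id:{\bf M}^{(1)}\to {\bf M}^{(2)}$ enlarges the cofibrations and the right adjoint $Id:{\bf M}^{(2)}\to {\bf M}^{(1)}$ shrinks the fibrations, so every cofibration of ${\bf M}^{(1)}$ is a cofibration of ${\bf M}^{(2)}$ and every fibration of ${\bf M}^{(2)}$ is a fibration of ${\bf M}^{(1)}$; in particular every object fibrant in ${\bf M}^{(2)}$ is fibrant in ${\bf M}^{(1)}$.

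First I would check that the two structures compute the same homotopy function complexes. Because the cofibrations only grow, a Reedy cofibrant cosimplicial resolution for ${\bf M}^{(1)}$ is still Reedy cofibrant, hence a cosimplicial resolution, for ${\bf M}^{(2)}$; dually, because the fibrations only shrink, a Reedy fibrant simplicial resolution for ${\bf M}^{(2)}$ is a simplicial resolution for ${\bf M}^{(1)}$. Forming a two-sided homotopy function complex from such compatible resolutions therefore produces literally the same simplicial set in both structures, so $\mathrm{map}^{(1)}(-,-)$ and $\mathrm{map}^{(2)}(-,-)$ agree up to natural weak equivalence \cite[Ch. 17]{Hi}. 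Consequently the orthogonality condition defining a $C$-local object, and the testing condition defining a $C$-local equivalence, are insensitive to which of the two structures is used.

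Next I would record one inclusion for free. A $C$-local object of ${\bf M}^{(2)}$ is ${\bf M}^{(2)}$-fibrant, hence ${\bf M}^{(1)}$-fibrant, and satisfies the very same condition against $C$; so it is a $C$-local object of ${\bf M}^{(1)}$. Since a $C$-local equivalence is detected by testing against all $C$-local objects, having fewer test objects is a weaker requirement, and therefore every $C$-local equivalence of ${\bf M}^{(1)}$ is automatically a $C$-local equivalence of ${\bf M}^{(2)}$.

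The hard part will be the reverse inclusion, where the mismatch of fibrant objects must be absorbed by a fibrant replacement. Given a $C$-local equivalence $f$ of ${\bf M}^{(2)}$ and an arbitrary $C$-local object $Z$ of ${\bf M}^{(1)}$, I would factor $Z\to \ast$ in ${\bf M}^{(2)}$ as a weak equivalence $Z\to Z'$ followed by an ${\bf M}^{(2)}$-fibration. Then $Z'$ is ${\bf M}^{(2)}$-fibrant, hence ${\bf M}^{(1)}$-fibrant, and since $Z\to Z'$ is a weak equivalence between ${\bf M}^{(1)}$-fibrant objects the locality condition transports from $Z$ to $Z'$; by the second paragraph $Z'$ is then a $C$-local object of ${\bf M}^{(2)}$. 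Thus $\mathrm{map}(f,Z')$ is a weak equivalence, and because $Z\to Z'$ is a weak equivalence of fibrant objects the comparison $\mathrm{map}(-,Z)\to \mathrm{map}(-,Z')$ is a weak equivalence \cite[Ch. 17]{Hi}, forcing $\mathrm{map}(f,Z)$ to be a weak equivalence as well. Hence $f$ is a $C$-local equivalence of ${\bf M}^{(1)}$, the two classes coincide, and the localizations share their weak equivalences. The only genuine subtlety is the bookkeeping around fibrancy in this last step; everything else is formal once the homotopy function complexes have been identified.
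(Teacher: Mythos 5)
Your proof is correct, and in the crucial direction it takes a genuinely different route from the paper's. The shared skeleton is the observation that the Quillen-pair hypothesis gives $\mathrm{cof}({\bf M}^{(1)})\subseteq \mathrm{cof}({\bf M}^{(2)})$ and $\mathrm{fib}({\bf M}^{(2)})\subseteq \mathrm{fib}({\bf M}^{(1)})$, hence every ${\bf M}^{(2)}$-fibrant object is ${\bf M}^{(1)}$-fibrant, from which the inclusion of the $C$-local equivalences of ${\bf M}^{(1)}$ in those of ${\bf M}^{(2)}$ comes for free --- this is the content of the paper's first sentence as well. For the converse, however, the paper replaces the \emph{map} rather than the test object: given a weak equivalence $f$ of $L_{C}{\bf M}^{(2)}$ it forms a fibrant approximation $\bar{f}$ in $L_{C}{\bf M}^{(1)}$, then a fibrant approximation $\tilde{f}$ to $\bar{f}$ in ${\bf M}^{(2)}$, notes that the resulting objects are $C$-local with respect to ${\bf M}^{(2)}$, and concludes using the standard fact that a $C$-local equivalence between $C$-local objects is a weak equivalence \cite[Chapter 3]{Hi}; you instead fix an arbitrary ${\bf M}^{(1)}$-local test object $Z$, replace it by an ${\bf M}^{(2)}$-fibrant approximation $Z'$, and transfer the locality test along the homotopy invariance of function complexes. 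Your version buys two things: first, you make explicit what the paper's proof uses tacitly (already in the step ``$\tilde{X}$ and $\tilde{Y}$ are $C$-local with respect to ${\bf M}^{(2)}$''), namely that the two structures have naturally weakly equivalent homotopy function complexes, and your compatible-resolutions argument (an ${\bf M}^{(1)}$-cosimplicial resolution is one for ${\bf M}^{(2)}$ since latching maps stay cofibrations, an ${\bf M}^{(2)}$-simplicial resolution is one for ${\bf M}^{(1)}$ since matching maps stay fibrations, so a single two-sided complex serves both structures) is a clean way to see it; second, you only ever use the factorization axiom of ${\bf M}^{(2)}$, never fibrant approximation in the localized structure, so your argument identifies the two classes of $C$-local equivalences without presupposing that $L_{C}{\bf M}^{(1)}$ exists as a model category --- a genuine (if mild) gain in generality, since $C$ is allowed to be a proper class and \cite[3.3.1(1)]{Hi} is only a definition of what the localization is when it exists. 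The paper's version is in exchange shorter, trading your function-complex bookkeeping for the standard identification of the fibrant objects of $L_{C}{\bf M}^{(1)}$ with the ${\bf M}^{(1)}$-local objects.
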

\begin{proof}
Part of the hypothesis implies that every weak equivalence of $L_{C}{\bf M}^{(1)}$
is a weak equivalence of $L_{C}{\bf M}^{(2)}$. Conversely,
let $f:X\rightarrow Y$ be a weak equivalence of $L_{C}{\bf M}^{(2)}$.
Let $\bar{f}:\bar{X}\rightarrow \bar{Y}$ be a fibrant approximation
to $f$ in $L_{C}{\bf M}^{(1)}$ and $\tilde{f}:\tilde{X}\rightarrow \tilde{Y}$
a fibrant approximation to $\bar{f}$ in {\bf M}$^{(2)}$. It follows
that $\tilde{X}$ and $\tilde{Y}$ are $C$-local with respect to {\bf M}$^{(2)}$,
therefore $\tilde{f}$ is a weak equivalence of {\bf M}$^{(i)}, i\in \{1,2\}$.
But then $f$ is a weak equivalence of $L_{C}{\bf M}^{(1)}$.
\end{proof}

\end{document}